\newcommand\calM{{\mathcal M}}
\newcommand\loc{\mathrm{loc}}
\newcommand\inv{\mathrm{inv}}
\tikzset{mynode/.style={circle,draw=black,fill=black,inner sep=1.8pt,outer sep=0pt}}
\tikzset{edgelabel/.style={\mcol,inner sep=0pt}}
\tikzset{invlabel/.style={draw=black,text=black,circle,inner sep=0pt,minimum size=3mm}}
\tikzset{triv/.style={circle,fill=black,inner sep=0.2mm}}
\newcommand\tikzif[2][]{
\tikzifinpicture{#2}{\begin{tikzpicture}[#1]#2\end{tikzpicture}}
}
\tikzset{math mode/.style = {execute at begin node=$, execute at end node=$}}
\def\dr{red!80!black} 
\def\dg{green!80!black} 
\def\db{blue!80!black} 
\tikzset{d/.style={ultra thick}}
\tikzset{dr/.style={draw=\dr,d}}
\tikzset{dg/.style={draw=\dg,d}}
\tikzset{db/.style={draw=\db,d}} 
\def\mcol{}
\tikzset{rt/.style={text=blue,execute at begin node=$\sf,execute at end node=$}}
\tikzset{arrow/.style={postaction={decorate,decoration={markings,mark = at position #1 with {\arrow{Straight Barb[line width=0.2mm,length=1.2mm]}}}}},arrow/.default=0.5}
\tikzset{invarrow/.style={postaction={decorate,decoration={markings,mark = at position #1 with {\arrowreversed{Straight Barb[line width=0.2mm,length=1.2mm]}}}}},invarrow/.default=0.5}
\newlength\myshift
\newcommand\getshift{%
\pgfmathsetlength{\myshift}{0.8mm}
}
\def\noparenx#1{%
	\ifx\relax#1
	\else
	\if)#1%
	\else
	\if(#1%
	\else
	#1%
	\fi
	\fi
	\expandafter\noparenx
	\fi
}
\newcommand\rh[5][]{
\tikzif[baseline=0,scale=1.25]{
\getshift
\draw[thick,black] (0,0) coordinate (ad) -- node[edgelabel,left,xshift=\myshift] {$#2$} ++(-60:1) coordinate (ba) -- node[edgelabel,right,xshift=-\myshift] {$#3$} ++(60:1) coordinate (cb) -- node[edgelabel,right,xshift=-\myshift] {$#4$} ++(120:1) coordinate (dc) -- node[edgelabel,left,xshift=\myshift] {$#5$} cycle;
\ifx\&#1\&\else\node[invlabel] at (0.5,0) {$\ss#1$};\fi
}}
\newcommand\uptri[4][]{
  \tikzif[baseline=0.34cm,scale=1.25]{
    \getshift
    \draw[thick,black] (-0.5,0) -- node[edgelabel] (horiz) {$\vphantom{0}\smash{#3}$} ++(0:1) -- node[edgelabel,right,xshift=-\myshift] (NE) {$#4$} ++(120:1) -- node[edgelabel,left,xshift=\myshift] (NW) {$#2$} ++(240:1) -- cycle; 
    \ifx\&#1\&\else\node[invlabel] at (0,0.33) {$\ss#1$};\fi
  }}
\newcommand\downtri[4][]{
  \tikzif[baseline=-0.54cm,scale=1.25]{
    \getshift
    \begin{scope}[scale=-1]
      \draw[thick,black] (-0.5,0) -- node[edgelabel] (horiz) {$\vphantom{0}\smash{#3}$} ++(0:1) -- node[edgelabel,left,xshift=\myshift] (NE) {$#4$} ++(120:1) -- node[edgelabel,right,xshift=-\myshift] (NW) {$#2$} ++(240:1) -- cycle; 
      \ifx\&#1\&\else\node[invlabel] at (0,0.33) {$\ss#1$};\fi
    \end{scope}
  }}
\renewcommand\ss{\scriptstyle}
\newcommand\sss{\scriptscriptstyle}
\newcommand\Uq{{\mathcal U}_q}
\newcommand\gqg{\Uq(\mathfrak{g}[z^\pm])}
\newcommand\Aqg{\Uq(\mathfrak{gl}_{d+2}[z^\pm])}
\DeclareMathOperator{\codim}{codim}
\renewcommand\AA{{\mathbb A}}
\newcommand\ZZ{{\mathbb Z}}
\newcommand\NN{{\mathbb N}}
\newcommand\CC{{\mathbb C}}
\newcommand\PP{{\mathbb P}}
\newcommand\RR{{\mathbb R}}
\newcommand\Otimes\bigotimes
\theoremstyle{plain}
\newtheorem{thm}{Theorem}
\newtheorem{thmd}{Theorem}
\newtheorem{prop}{Proposition}
\newtheorem{lem}{Lemma}
\theoremstyle{remark}
\newtheorem{rmk}{Remark}
\newtheorem{ex}{Example}
\newcommand\defn[1]{\textbf{#1}}
\newcommand\into\hookrightarrow
\newcommand\onto\twoheadrightarrow
\newcommand\otno\twoheadleftarrow
\newcommand\dom\backslash
\newcommand\tensor\otimes
\newcommand\Tensor\bigotimes
\newcommand\iso\cong
\newcommand\lie[1]{{\mathfrak{#1}}}
\tikzset{gauged/.style={rectangle,rounded corners=2mm,draw,inner sep=0.5mm,minimum size=4mm}}
\tikzset{framed/.style={rectangle,draw,inner sep=0.5mm,minimum size=4mm}}
\tikzset{script math mode/.style = {execute at begin node=$\ss , execute at end node=$}}
\newcommand\actson\circlearrowright
\newcommand\junk[1]{}
\newcommand\Sinf{{\mathcal S}_\infty}
\newcommand\der{\partial}
\newcommand\Sc{{\mathfrak S}}
\newcommand\Gr{{\mathfrak G}}
\newcommand\rem[2][]{}
\title[Schubert puzzles and integrability III]{Schubert puzzles and integrability III: \\ separated descents}
\author{Allen Knutson}
\address{Allen Knutson, Cornell University, Ithaca, New York}
\email{allenk@math.cornell.edu}
\thanks{AK was supported by NSF grant 1953948.}
\author{Paul Zinn-Justin}
\address{Paul Zinn-Justin, School of Mathematics and Statistics, The University of Melbourne, 
Victoria 3010, Australia}
\email{pzinn@unimelb.edu.au}
\thanks{PZJ was supported by ARC grants DP190102897 and DP210103081.
  Computerized checks of the results of this paper were performed
  with the help of Macaulay 2 \cite{M2,artic82};
  all examples of this paper can be found implemented at
  \url{https://www.unimelb-macaulay2.cloud.edu.au/\#tutorial-sepdesc}.
}
\date{\today}
\begin{document}

\begin{abstract}
  In paper I of this series we gave positive formul\ae\ for expanding
  the product $\Sc^\pi \Sc^\rho$ of two Schubert polynomials, in the
  case that both $\pi,\rho$ had shared descent set of size $\leq 3$.
  Here we introduce and give positive formul\ae\ for two new classes of
  Schubert product problems: {\em separated descent} in which
  $\pi$'s last descent occurs at (or before) $\rho$'s first,
  and {\em almost separated descent} in which
  $\pi$'s last two descents occur at (or before) $\rho$'s first two
  respectively. In both cases our puzzle formul\ae\ extend to
  $K$-theory (multiplying Grothendieck polynomials), and in the
  separated descent case, to equivariant $K$-theory. The two formul\ae\
  arise (via quantum integrability) from fusion of minuscule
  quantized loop algebra representations in types $A$, $D$ respectively.
\end{abstract}

\maketitle

{\small
  \tableofcontents
}

\section{Introduction}

Our references for the classic material in
\S\ref{ssec:schubgroth}-\ref{ssec:descents} are \cite{AndersonFulton}, and
for the well-studied\footnote{It seems astonishing that these are not
  yet in textbooks, other than a passing reference in
  \cite{MS-book}.} Grothendieck polynomials, \cite{Kirillov}.

\subsection{Schubert and Grothendieck polynomials}
\label{ssec:schubgroth}
Let $\Sinf := \bigcup_{n\ge 0} \mathcal S_n$ be the infinite symmetric group,
where $\mathcal S_n\subset \mathcal S_{n+1}$ in the natural way.
Given $\sigma\in\Sinf$, we define the \defn{Schubert polynomial}
$\Sc^\sigma\in R := \ZZ[x_1,x_2,\ldots]$ inductively by
\begin{align*}
\Sc^\sigma  := \der_i \Sc^{\sigma s_i}\qquad \text{for }\sigma(i)<\sigma(i+1)
\qquad\qquad\qquad\qquad
\Sc^{n\ldots 21}  := \prod_{i=1}^n x_i^{n-i}
\end{align*}
where $s_i$ is the elementary transposition $i\leftrightarrow i+1$, $\der_i$ is the corresponding \defn{divided difference operator}
\[
\der_i f := \frac{f-f|_{x_i\leftrightarrow x_{i+1}}}{x_i-x_{i+1}}\qquad f\in R
\]
and $n\ldots 21$ is the longest element in $\mathcal S_n$. This $\Sc^\sigma$
is a homogeneous polynomial, whose degree is
the \defn{inversion number} $\ell(\sigma)$ of $\sigma$ defined by
\[
\ell(\sigma) := \#\{i<j: \sigma_i>\sigma_j\}
\]
The $\{\Sc^\sigma,\ \sigma\in \Sinf\}$ form a $\ZZ$-basis of $R$,
and this basis is compatible with each subring
$\bigcap_{i \in C} \ker(\partial_i)\leq R$ defined by a choice of
``ascent set'' $C \subseteq \NN_+$, i.e.
$$ \bigcap_{i \in C} \ker(\partial_i) \quad\text{ has $\ZZ$-basis }\quad
\{\Sc^\sigma\colon \sigma \in \Sinf,\ \sigma(i)<\sigma(i+1) \ \forall i\in C\}
$$

Define the \defn{Schubert structure constants}
$c^{\pi\rho}_\sigma \in \ZZ$ by
\[
\Sc^\pi \Sc^\rho =\sum_{\sigma\in\Sinf} c^{\pi\rho}_\sigma \Sc^\sigma
\]
It is well-known that $c^{\pi\rho}_\sigma\in \ZZ_{\ge 0}$ from
geometric considerations \cite[Chapter 19.3]{AndersonFulton}.
This prompts the search for a manifestly positive combinatorial formula for
the $c^{\pi\rho}_\sigma$.

The compatibility of the basis with the subrings has the consequence
$$ \pi(i) < \pi(i+1), \rho(i)<\rho(i+1), c^{\pi\rho}_\sigma \neq 0
\quad\implies\quad
\sigma(i)<\sigma(i+1). $$

The Schubert polynomial $\Sc^\sigma$ is the top degree part of the inhomogeneous
\defn{Grothendieck polynomial} $\Gr^\sigma$, where these polynomials are
defined by the recursion
\begin{align*}
\Gr^\sigma := \bar\der_i \Gr^{\sigma s_i}\qquad \text{for }\sigma(i)<\sigma(i+1)
\qquad\qquad\qquad\qquad
\Gr^{n\ldots 21} := \prod_{i=1}^n (1-x_i)^{n-i}
\end{align*}
whose \defn{Demazure operator} $\bar\der_i$ is defined by
\[
\bar\der_i f := \frac{x_{i+1}f-x_if|_{x_i\leftrightarrow x_{i+1}}}{x_{i+1}-x_i}\qquad f\in R
\]

There are ``double'' versions (in that they use two sets of variables)
of both the Schubert and Grothendieck polynomials, based on the same
operators but with modified initial conditions:
\begin{align*}
\Sc^{n\ldots 21} = \prod_{i,j\ge1,\, i+j\le n} (x_i-y_j)
\qquad\qquad
\Gr^{n\ldots 21} = \prod_{i,j\ge1,\, i+j\le n} (1-x_i/y_j)
\end{align*}

\subsection{Descents and overlap}\label{ssec:descents}
The problem of computing the $c^{\pi\rho}_\sigma$ has a long history;
see \cite[Chapter 1]{AndersonFulton}, \cite{KnutsonICM}.
In particular, if $\sigma$ has (at most) a single descent at say $k$,
then $\Sc^\sigma$ is a {\em Schur polynomial}\/ in $k$ variables,
and if all of $\pi,\rho,\sigma$ do then the corresponding $c^{\pi\rho}_\sigma$
are Littlewood--Richardson coefficients \cite[Chapter 10.6]{AndersonFulton}.

More generally, if we define the descent set of a permutation
\[
  D(\sigma) := \{ i\in \ZZ_{> 0}: \sigma_i>\sigma_{i+1} \} \qquad \sigma\in \Sinf
\]
then in past work, the complexity of the product rule seemed to increase with $desc(\pi,\rho)=\#(D(\pi)\cup D(\rho))$.
In particular, in \cite{BKPT,artic71}, the cases $desc(\pi,\rho)=2,3$
were treated, giving ``puzzle rules'' for $c^{\pi\rho}_\sigma$ (only
nonequivariantly in the second case), and in \cite{artic80} a mildly
nonpositive (and nonequivariant) puzzle rule was given for
$desc(\pi,\rho)=4$.

In this paper we measure this complexity in a different way.
Given $\pi,\rho\in\Sinf$, we introduce the \defn{overlap}\/ of (the
descent sets of) $\pi$ and $\rho$ to be
\[
  O(\pi,\rho) := ((D(\pi)\cup D(\rho))\cap [\min D(\pi),\max D(\rho)])
\]
and its cardinality $over(\pi,\rho) := \# O(\pi,\rho)$.
Note that $over(\pi,\rho)\le desc(\pi,\rho)$.

In what follows we provide two combinatorial formulae for  $c^{\pi\rho}_\sigma$ when $over(\pi,\rho)=1,2$ 
(only the first of which extends to the equivariant setting).
Note that $over(\pi,\rho)$ and $over(\rho,\pi)$ are in general different;
since $c^{\pi\rho}_\sigma=c^{\rho\pi}_\sigma$, we have the freedom to interchange $\pi$ and $\rho$.

\newcommand\Al{{\mathcal A}}
\subsection{From strings to permutations, and back}
Before we formulate our main results, we need a different labeling of Schubert polynomials.
Given a totally ordered set $\Al$, to a string $\lambda\in \Al^n$ of $n$ letters in $\Al$ we associate a permutation $f_\Al(\lambda)\in \mathcal S_n$
as follows.
First consider the \defn{standardization}\/ $\tilde\lambda$ of $\lambda$ which is obtained by replacing letters of $\lambda$ with
$\{1,\ldots,n\}$ in such a way that $\lambda_i\le \lambda_j \Leftrightarrow \tilde\lambda_i<\tilde\lambda_j$ for all $i<j$.
View $\tilde\lambda$ as a permutation and define $f_\Al(\lambda)=\tilde\lambda^{-1}$.
For example, if $\Al=\{0,1,2\}$ and $\lambda=0201$, then $\tilde\lambda=1423$ and $f_\Al(\lambda)=1342$.

This $f_\Al:\Al^n\to \mathcal S_n$ is not injective as soon as $\# \Al>1$,
e.g., any single-letter string is mapped to the identity permutation. It is however surjective for $\Al=\ZZ_{\ge 0}$,
and in fact any permutation $\sigma\in\mathcal S_n$ has a unique preimage in $\Al^n$ when $\# \Al = \# D(\sigma)+1$ (this being the minimal value of $\#\Al$).
With the standard choice $\Al=\{0,\ldots,d\}$, write $\omega=0^{p_0}\ldots d^{p_d}\in \Al^n$, where $p_i=n_{i+1}-n_i$
and $D(\sigma)=\{ n_1<\cdots<n_d\}$, $n_0=0$, $n_{d+1}=n$.
Then $\lambda$ defined by $\lambda_{\sigma(i)}=\omega_i$, $i=1,\ldots,n$, 
satisfies $f_\Al(\lambda)=\sigma$. For example,
\[
\begin{tikzpicture}[scale=.5]
\node at (-1,1) {$\sigma$}; \foreach \s [count=\x] in {1,3,6,2,5,4,7} \node at (\x,1) {$\s$};
\node at (-1,0) {$\omega$}; \foreach \s [count=\x] in {0,0,0,1,1,2,2} \node at (\x,0) {$\s$};
\foreach \s [count=\x] in {1,3,6,2,5,4,7} \draw[-latex,thick] (\x,-0.8) .. controls (0.85*\x+0.15*\s,-1.5) and (0.85*\s+0.15*\x,-1.5) .. (\s,-2.2);
\node at (-1,-3) {$\lambda$}; \foreach \s [count=\x] in {0,1,0,2,1,0,2} \node at (\x,-3) {$\s$};
\draw (3.5,1.5) -- (3.5,-0.5);
\draw (5.5,1.5) -- (5.5,-0.5);
\end{tikzpicture}
\]
\junk{PZJ: should we define the content of $\lambda$ to be $\omega$?
  AK: Do you mean, define $\omega$ to be the content of $\lambda$?
  PZJ: it's just that I never defined the concept of "content". an
  obvious way to define it is to say that it's same as the "sort"
  operation that takes $\lambda$ to $\omega$}
Write $\omega = sort(\lambda)$ in this case and call $\omega$ the \defn{content}
of $\lambda$. The notion of ``content'' more usually refers to the
multiplicities of the symbols used, but this approach is of
equivalent utility; obviously $\lambda,\lambda'$ have
the same content exactly when $sort(\lambda) = sort(\lambda')$.

We can encode $\sigma$ with more letters in the alphabet by adding gratuitous nondescents to the set
$N=\{n_1,\ldots,n_d\}$. For example, given two permutations $\pi,\rho\in\Sinf$, it is natural to use
the set $N=D(\pi)\cup D(\rho)$, leading to strings $\lambda$ and $\mu$ (for $\pi$ and $\rho$) with the same content.
This is the choice that's (implicitly) made in \cite{artic71,artic80}, but not in the present work.

\subsection{Separated-descent puzzles}
Assume given $\pi,\rho\in\mathcal S_n$ with $over(\pi,\rho)=1$. Writing $\# D(\rho)=k+1$ and $\# D(\pi)=d-k$, we choose the alphabets
\begin{equation} \label{eq:AB}
\begin{split}
\Al_1&=\{\ \_<k+1<\cdots<d\}
\\
\Al_2&=\{0<\cdots<k<\_\ \}
\end{split}
\end{equation}
(the $\_\ $ letter is a ``blank''\footnote{The ``blank'' edge labels will not be drawn in puzzles; we hope this does not cause confusion.}
to describe $\pi$ as a string $\lambda\in \Al_1^n$ and $\rho$ as a string $\mu\in \Al_2^n$.

Note that if $O(\pi,\rho)=\{r\}$, 
then there are exactly $r$ blanks in $\lambda$ because $\min D(\pi)=r$, and $n-r$ blanks in $\mu$ because $\max D(\rho)=r$.

Here is a practical test for when $(\pi,\rho)$ form a
separated-descent pair, and what strings $\lambda,\mu$ to associate to them.
\begin{itemize}
\item Invert $\pi$ and $\rho$ to form the initial $\lambda,\mu$.
\item Find a $k \in [n]$ such that the values $1\ldots k$ occur in order
  in $\lambda$, and $k+1\ldots n$ occur in order in $\mu$.
  There may be multiple $k$ for which this is satisfied.
  (If there are none, we are not in the separated-descent case.)
  Erase $1\ldots k$ from $\lambda$ and $k+1\ldots n$ from $\mu$,
  leaving blanks $\_\,$ in their places.
\item If more numbers in the resulting strings occur in order ($3$ left of
  $4$ left of $5$, say), they can safely be replaced by a single letter
  (all $3$s). The Kogan cases \cite{Kogan} occur when all of $\lambda$'s
  remaining letters are identifiable, or all of $\mu$'s.
\end{itemize}
These will be the resulting strings $\lambda,\mu$ in the theorem below.

We need a third alphabet obtained by removing the blank in $\Al_1\cup \Al_2$:
\begin{equation}\label{eq:C}
\Al_3=\{0<\cdots<d\}
\end{equation}

\begin{thm}\label{thm:sepdesc}
Given strings $\lambda\in \Al_1^n$ and $\mu\in \Al_2^n$ in the alphabets \eqref{eq:AB}, such that
the combined number of blanks in $\lambda$ and $\mu$ is $n$,
write $\pi=f_{\Al_1}(\lambda)$ and $\rho=f_{\Al_2}(\mu)$; then for any $\sigma\in\mathcal S_n$,
$c^{\pi\rho}_\sigma$ is the number of ``puzzles'' with boundaries $\lambda,\mu,\nu$ such that $\sigma=f_{\Al_3}(\nu)$
(with $\Al_3$ given by \eqref{eq:C}).

A puzzle means here a size $n$ equilaterial triangle subdivided into size $1$ triangles, with labels on edges of the latter,
following the patterns
\begin{align*}
& \uptri {} i i \qquad \downtri {} i i 
\qquad 
 \uptri i i {} \qquad \downtri i i {}
\qquad \qquad\qquad
& \uptri j {ij} i\qquad \downtri j {ij} i \qquad i<j
\end{align*}
Later, for better visualization, we'll connect using colored paths
those edges of a given triangle with the same label.

For any such puzzle, $\nu\in \Al_3^n$, and its content is the concatenation
of $\lambda$'s with $\mu$'s with the blanks removed.

To compute structure constants for Grothendieck polynomials, allow the following extra ``$K$-triangles''
\begin{align*}
&\uptri {i} {ij} {j}\qquad i\le k<j
\qquad\qquad 
&\downtri{i}{ij}{j} \qquad i<j\le k\text{ or } k<i<j
\end{align*}
Then
$c^{\pi\rho}_\sigma=(-1)^{\ell(\pi)+\ell(\rho)-\ell(\sigma)} \#\{\text{such puzzles}\}$,
realizing the (alternating signed) ``$K$-positivity''
guaranteed in \cite{Brion-KPos}.
Alternatively, one should consider that each $K$-triangle contributes
a factor of $-1$ (the contribution of a full puzzle being the product
of the contributions of its pieces), insofar as {\em every one}\/ of
the puzzles has $\ell(\pi)+\ell(\rho)-\ell(\sigma)$ $K$-tiles.

\noindent
\begin{minipage}{0.8\linewidth}
\hskip 1em
To compute structure constants for double Schubert polynomials, allow an
``equivariant rhombus'' with all blank edges, as at right.
Each such rhombus contributes a factor of $y_j-y_i$ where
$n-i$ is the distance to the NE side and $j$ is the distance to the NW side.
(Practically speaking, draw lines SW and SE from the rhombus, exiting
at positions $j$ and $i$.)
\end{minipage}
\hfill
\begin{minipage}{0.1\linewidth}
  \vskip-.2cm
  $ \rh{}{}{}{} $
\end{minipage}

\noindent
\begin{minipage}{0.58\linewidth}  \hskip 1em
  Finally, to compute structure constants for double Grothendieck
  polynomials, allow both equivariant rhombi and $K$-triangles;
  an equivariant rhombus contributes $1-y_j/y_i$, $K$-triangles still
  contribute $-1$ except the up-pointing $K$-triangles contribute an
  extra $y_j/y_i$, and so do rhombi of the form at right.
\end{minipage}
\quad
\begin{minipage}{0.35\linewidth}
  \vskip-.3cm
\[
\tikz[baseline=0,scale=1.25]{\rh{}{j}{j}{}\draw (0,0) -- node {$j$} (1,0);}\quad j>k
\qquad
\tikz[baseline=0,scale=1.25]{\rh{i}{}{}{i}\draw (0,0) -- node {$i$} (1,0);}\quad i\le k
\]
\end{minipage}

This latter rule manifestly realizes the ``$K_T$-positivity'' predicted in
\cite{AGM-Kpos}.
\end{thm}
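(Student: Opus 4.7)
The plan is to interpret the puzzle rule as the partition function of a solvable lattice model built from the fusion of minuscule representations of $\Uq(\mathfrak{gl}_{d+2}[z^\pm])$, following the strategy of the previous papers in the series.

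First I would assign to each edge of a puzzle a vector space corresponding to a fundamental (minuscule) representation of $\Uq(\mathfrak{gl}_{d+2}[z^\pm])$, with a spectral parameter attached to each of the three families of parallel lines in the triangle. Single-labeled edges $i$ would sit in the first fundamental representation, and labels $ij$ at horizontal edges would be built from the fusion of two single edges onto a higher wedge. The list of admissible triangle labels in the theorem should then be identified, one by one, as the nonzero matrix coefficients of the ``three-way" intertwiner built from the standard trigonometric $R$-matrix of type $A$ restricted to the appropriate minuscule tensor factors. The $K$-theoretic triangles would correspond to the extra terms that appear when one specializes spectral parameters to bring the $R$-matrix out of the generic invertible regime, producing the projector onto a fused summand — this is where the sign $(-1)$ and the exact list of allowed $K$-triangle shapes, distinguishing $i\le k<j$ from $k<i<j$, should drop out. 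The equivariant rhombus would appear as the diagonal part of the $R$-matrix evaluated on blank-blank scattering, giving a factor $y_j-y_i$ (respectively $1-y_j/y_i$), and the marked rhombi in the $K_T$ case would correspond to the cross-terms coming from the interaction between $K$-theoretic fusion and the equivariant weights.

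Having identified the puzzle partition function with a matrix element of a composition of $R$-matrices, the next step is to match it with the Schubert structure constant $c^{\pi\rho}_\sigma$. The Yang--Baxter equation together with the compatibility of fusion with boundary conditions lets one freely move lines through the triangle; using this I would reduce the partition function to a product of single $R$-matrix actions whose matrix elements are known, by previous work in the series or by Molev--Sagan-type formulas, to reproduce the Cauchy/interpolation identity characterizing double Grothendieck polynomials. The alphabets $\Al_1$ and $\Al_2$ with their ``blanks" are precisely tailored so that the boundary data $\lambda$ and $\mu$ encode $\pi$ and $\rho$ inside the larger weight space indexed by $\Al_3$; the separated-descent hypothesis $over(\pi,\rho)=1$ is exactly what guarantees that the fusion is compatible, i.e.\ that the blanks on the two sides occupy complementary positions so the boundary weight vectors lie in the fused representation.

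The main obstacle will be the matching of the combinatorial tile set with the algebraic matrix coefficients, especially in the $K$-theoretic and equivariant refinements. Concretely, I expect the hardest step to be verifying that \emph{every} puzzle has exactly $\ell(\pi)+\ell(\rho)-\ell(\sigma)$ many $K$-triangles (so that the sign is a global feature of the boundary rather than a delicate cancellation) and that the two forms of $K$-triangle, together with the two marked equivariant rhombi, are exactly the complete list of residues of the fused $R$-matrix — neither more nor fewer. This requires a careful analysis of the minuscule decomposition of tensor products of fundamental representations of type $A$, together with the explicit evaluation of the trigonometric $R$-matrix at the poles relevant to fusion. Once this bookkeeping is done, positivity and $K_T$-positivity are automatic from the fact that every tile contributes a monomial $\pm y_j^{\pm 1}y_i^{\mp 1}$ or $\pm 1$ of a predictable sign, giving the two positivity statements of \cite{Brion-KPos} and \cite{AGM-Kpos}. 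The final formulations for single (resp.\ nonequivariant) Schubert and Grothendieck polynomials will then follow by specializing $y_j\to 0$ (resp.\ $y_j\to 1$) in the equivariant versions and checking that the specialization is compatible with the fusion procedure.
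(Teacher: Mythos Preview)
Your high-level strategy --- building the puzzle partition function from $R$-matrices and fusion of minuscule representations of $\Uq(\lie{gl}_{d+2}[z^\pm])$, then using Yang--Baxter --- matches the paper's. But two steps in your plan are misidentified and would not go through as written.

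First, the $K$-triangles do \emph{not} arise from ``specializing spectral parameters to bring the $R$-matrix out of the generic invertible regime.'' The spectral specialization $z'/z''=q^{-2}$ gives the factorization $\check R_{1,2}=DU$, producing the full set of \emph{generic} triangles (all $\uptri i{ij}j$ with $i\neq j$). What the paper does instead is: prove a puzzle identity at generic~$q$ for \emph{motivic Segre classes} (Theorem~\ref{thm:motivic}); then twist all intertwiners by $\Omega=q^{B/2}$ for an explicit skew form $B$ on an enlarged weight space (Lemma~\ref{lem:twist}), so that $q^{-\ell(\lambda)}S^\lambda$ has a finite $q\to 0$ limit equal to $(\Gr^\lambda)^\vee$; then conjugate by a diagonal matrix of $q$-powers (equation~\eqref{eq:sepdescconj}); and only then send $q\to 0$. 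The $K$-triangles are precisely those entries of $\tilde U',\tilde D'$ that survive with coefficient $-1$, and the dichotomy $i\le k<j$ (up) versus $i<j\le k$ or $k<i<j$ (down) comes from the explicit change of basis, not from any residue analysis. The claim that every puzzle has $\ell(\pi)+\ell(\rho)-\ell(\sigma)$ $K$-tiles is then automatic from the twist, since each $K$-tile carries inversion charge~$1$.

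Second, the bridge to Schubert calculus is not via Molev--Sagan or a Cauchy identity. One needs the identification of $S^\lambda_a|_\sigma$ with the restriction of the motivic Segre class of $X^\lambda_\circ$ to a fixed point (imported from \cite{artic80}), together with the verification that the $R$-matrices restricted to the chosen face subspaces $V^A_a$ agree with the standard type~$A$ $R$-matrix~\eqref{eq:Rsingle} (condition~(c), via Proposition~\ref{prop:weaktostrong}) and that the ``trivial'' puzzle with sorted bottom is unique (Proposition~\ref{prop:trivpuzzlea}, giving condition~(b) and fixing the normalization). Both ingredients are missing from your outline. Your final specialization scheme is also inverted: nonequivariance comes from setting the spectral parameters $z_i$ equal, and the passage $K\to H^*$ is by the leading-order expansion $y_i\mapsto 1-y_i$, not by sending $y_j\to 0$.
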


\begin{ex}\label{ex:sepdesc}
Let $\pi=1362547$ and $\rho=7321456$:
\[
\begin{tikzpicture}[xscale=.75,yscale=.5]
\node at (-1,4) {$\pi$}; \foreach \s [count=\x] in {1,3,6,2,5,4,7} \node at (\x,4) {$\s$};
\node at (-1,3) {$\omega_1$}; \foreach \s [count=\x] in {\_,\_,\_,3,3,4,4} \node at (\x,3) {$\vphantom{0}\s$};
\node at (-1,2) {$\omega_3$}; \foreach \s [count=\x] in {0,1,2,3,3,4,4} \node at (\x,2) {$\s$};
\node at (-1,1) {$\omega_2$}; \foreach \s [count=\x] in {0,1,2,\_,\_,\_,\_} \node at (\x,1) {$\vphantom{0}\s$};
\node at (-1,0) {$\rho$}; \foreach \s [count=\x] in {7,3,2,1,4,5,6} \node at (\x,0) {$\s$};
\draw (3.5,4.5) -- (3.5,-0.5);
\draw (5.5,4.5) -- (5.5,1.5);
\draw (1.5,2.5) -- (1.5,-0.5);
\draw (2.5,2.5) -- (2.5,-0.5);
\end{tikzpicture}
\]
where the $\omega_i$ are the contents on the three sides of the puzzles.
We read off $\lambda=\_3\_43\_4$ and $\mu=\_21\_\_\_0$.
Here are the puzzles computing the product $\Sc^\pi \Sc^\rho$
of the single Schubert polynomials:
\begin{center}
\tikzset{every picture/.style={scale=0.5,every node/.style={scale=0.5}}}
\input ex1.tex
\end{center}
We've connected edges of a given triangle with the same label using
colored paths for better visualization.  We conclude from the puzzle
calculation that
\[
\Sc^{1362547}\Sc^{7321456} = \Sc^{7461325}+\Sc^{7561234}+\Sc^{7631425}+\Sc^{7641235}+\cdots
\]
where $\cdots$ stands for a sum of Schubert polynomials labelled by permutations of size $>7$. In order to get the full sum, one needs to go to $n=10$,
i.e., consider $\lambda=\_3\_43\_4444$ and $\mu=2\_1\_\_\_0\_\_\_$, resulting in 24 puzzles.
\end{ex}

\begin{rmk}\label{rmk:breakties}
  There is no obligation to pick the minimal alphabets to encode $\pi$
  and $\rho$: the theorem is valid as stated for any strings
  satisfying the constraint on the number of blanks. (Moreover, the
  theorem follows straightforwardly from the case where the alphabets
  are {\em maximal}, i.e., where no number is repeated, in that the
  pullback of a Schubert class from a partial flag manifold to a
  less-partial one is again a Schubert class. This independence is not
  true of the ``motivic Segre classes'', cf Lemma~\ref{lem:pullback}.)
  In particular, we can also treat the case $over(\pi,\rho)=0$ by
  e.g.\ adding $\max D(\rho)$ to $\pi$'s alphabet, or $\min D(\pi)$ to
  $\rho$'s. 
\end{rmk}

\begin{rmk}\label{rmk:sepdescduality}
  There is an evident duality on these puzzles, flipping them left-right
  while taking $i \mapsto d-i$. This is of course a shadow of
  Grassmannian duality $(F_0 \leq F_1 \leq \ldots \leq F_d)
  \mapsto (F_d^\perp \leq F_{d-1}^\perp \leq \ldots \leq F_0^\perp)$.
\end{rmk}

\begin{ex}\label{ex:k01}
  Let $\pi=2431$, $\rho=2134$. Note $D(\pi)=\{2,3\}$, $D(\rho)=\{1\}$,
  $O(\pi,\rho)=\varnothing$, so we need to add one more nondescent to
  define our alphabets.  There are two choices ($k=0,1$) leading to
  $\lambda=3\_21$, $3\_2\_$ and $\mu=\_0\_\_$, $10\_\_$ respectively.
  For either choice, we draw a row of corresponding puzzles for double
  Grothendieck polynomials:
\begin{gather*}
  \tikzset{every picture/.style={scale=0.5,every node/.style={scale=0.5}}}
  \input ex2a.tex
  \\
  \tikzset{every picture/.style={scale=0.5,every node/.style={scale=0.5}}}
  { \hspace{-1cm}
    \input ex2b.tex }
\end{gather*}
(equivariant rhombi are grey, and $K$-triangles pink).

\junk{ \rem[gray]{it used to be that equivariant rhombi and K-triangles had
  inversion charges on them, but in {\it CotangentSchubert}\/ they're
  colored.  revert? AK: here the inversion charges are just $0,\pm 1$ right? Yes.
  In which case color is enough. The triangle will be too crowded with
  the inversion charge written in too.} }
Despite the different number of puzzles, we obtain the same structure constants.
For an instance of the subtlety of their computation, note the
\tikz[baseline=0,scale=.75]{\rh{}{j}{j}{}
\path[draw=green,line width=.108798608162487cm] (.75,.43) -- (.5,0);
\path[draw=green,line width=.108798608162487cm] (.5,0) -- (.75,-.43);
\draw (0,0) -- node {$j$} (1,0);}
with $j=1$ and in position $(1,2)$ of each of the rightmost puzzles.
In the first row ($k=0$), because $j>k$ that rhombus contributes
a $y_2/y_1$ factor, but in the second row ($k=1$) because $j\not >k$
it does not.

One can check from the explicit expression of the
Laurent polynomials
\begin{align*}
\Gr^{2431}&=(1-x_1/y_1)(1-x_2/y_1)(1-x_3/y_1)(1-x_1x_2/(y_2y_3))
\\
\Gr^{2134}&=1-x_1/y_1
\\
\Gr^{3421}&=(1-x_1/y_1)(1-x_2/y_1)(1-x_3/y_1)(1-x_2/y_1)(1-x_2/y_2)
\\
\Gr^{4231}&=(1-x_1/y_1)(1-x_2/y_1)(1-x_3/y_1)(1-x_2/y_1)(1-x_3/y_1)
\\
\Gr^{4321}&=(1-x_1/y_1)(1-x_2/y_1)(1-x_3/y_1)(1-x_2/y_1)(1-x_2/y_2)(1-x_3/y_1)
\end{align*}
that the following identity holds:
\[
  \Gr^{2431}\Gr^{2134} = \left(1-\frac{y_2}{y_1}\right) \Gr^{2431} + \frac{y_2}{y_1}(\Gr^{3421}+\Gr^{4231}-\Gr^{4321})
\]
\end{ex}
(In particular, unlike in example 1, these puzzles happen to already be large
enough to compute the stable expansion; there aren't additional terms
from $S_n, n\geq 5$.)

It can happen that multiple puzzle rules cover the same problem, giving
different formul\ae. For the two versions above of the computation,
the $desc(\pi,\rho)=4$ rule from \cite{artic80} and $desc(\pi,\rho)=3$ rule from \cite{artic71}
(respectively) would serve. However, those do not give $K_T$-positive rules,
as we enjoy here.


\subsection{Almost separated-descent puzzles}\label{ssec:almostsepdesc}
Now assume $over(\pi,\rho)=2$, which means that
$O(\pi,\rho)=\{r,s\}$ with $r=\min D(\pi)$ and $s=\max D(\rho)$.
Let $N=D(\pi)\cup\{s\}$ and $N'=D(\rho)\cup \{r\}$ (i.e., if $s\not\in D(\pi)$, we add a gratuitous nondescent at $s$, and similarly
for $D(\rho)$). If $\#N=d-k+1$ and $\#N'=k+1$, we choose the alphabets
\begin{equation} \label{eq:AB2}
\begin{split}
  \Al_1&=\{0<\cdots<k<\_\}
  \\
  \Al_2&=\{\_<k<\cdots<d\}
\end{split}
\end{equation}
to describe $\rho$ as a string $\lambda\in \Al_1^n$ and $\pi$ as a string
$\mu\in \Al_2^n$.
The letter $k$ occurs exactly $s-r$ times in
both $\lambda$ and $\mu$, and that the total number of blanks in
$\lambda$ and $\mu$ is $n-(s-r)$. {\em (Warning:} note the switch of $\pi$ and $\rho$
-- these puzzles will
have small numbers on the NW side and large numbers on the NE side,
contrary to the situation in separated-descent. So even though
separated-descent Schubert problems will be calculable with
almost-separated-descent puzzles, it will not be easy to compare/biject the
two resulting kinds of puzzles; compare examples
\ref{ex:sepdesc} and \ref{ex:ex1redux}.)

Here is a practical test for when $(\pi,\rho)$ form an
almost-separated-descent pair, and what strings to associate to them.
\begin{itemize}
\item Invert $\rho$ and $\pi$ to form the initial $\lambda,\mu$.
\item Find a pair $j \leq k \in [n]$ such that the values $1\ldots j$
  occur in order
  in $\mu$, that $k+1\ldots n$ occur in order in $\lambda$,
  and $j+1\ldots k$ occur in order in both.
  (In separated-descent we have $j=k$, and the third condition is empty.)
  There may be multiple pairs $j\leq k$ for which this is satisfied.
  (If there are none, we are not in the almost-separated-descent case.)
  Erase $k+1\ldots n$ from $\lambda$ and $1\ldots j$ from $\mu$,
  {\bf which is backwards from separated-descent,}
  leaving blanks $\_$ in their place.
\item If some numbers in the resulting strings occur in order ($3$ left of
  $4$ left of $5$, say), they can safely be replaced by a single letter
  (all $3$s, say). For example, this was required to be the case for $[j+1,k]$,
  which we do indeed replace with all $k$s.
\end{itemize}
These will be the resulting strings $\lambda,\mu$ in the theorem below.

\newcommand\da\searrow
\newcommand\ua\nearrow

We also need the third alphabet
\begin{equation}\label{eq:C2}
  \Al_3=\{\da0<\cdots<\da k-1 < odd <\ua k+1<\cdots<\ua d \}
\end{equation}
\junk{PZJ: there's really no reason to leave out $\da k$ or $\ua k$ --
  but not both (cf 10s or 21s at the bottom for $d=2$).  though here
  it doesn't lead to solving a more general problem -- it's equivalent
  to a relabeling. mention? AK: is it {\em really} about $k$ not $\_$ ?}

\begin{thm}\label{thm:almostsepdesc}
Given strings $\lambda\in \Al_1^n$ and $\mu\in \Al_2^n$ in the alphabets \eqref{eq:AB2}, such that
$k$ occurs with the same multiplicity $m$ in $\lambda$ and $\mu$, and
the combined number of blanks in $\lambda$ and $\mu$ is $n-m$,
write $\rho=f_{\Al_1}(\lambda)$ and $\pi=f_{\Al_2}(\mu)$; then for any $\sigma\in\mathcal S_n$,
$c^{\pi\rho}_\sigma$ is the number of ``puzzles'' with boundaries $\lambda,\mu,\nu$ such that $\sigma=f_{\Al_3}(\nu)$
(with $\Al_3$ given by \eqref{eq:C2}).

The puzzle labels on the diagonal sides of pieces are by
subsets $X \subseteq \{0,\ldots,d\}$. On the horizontal
sides they are $\ua i$ or $\da i$ with $i$ a single number,
or the words ``$even$'' or ``$odd$''. By ``$i<X$'' we mean each $x\in X$
has $i<x$, and ``$X<j$'' similarly. The puzzle pieces are
$$
 \begin{matrix}
   \uptri {iX} {\da i} {X} 
&\qquad
&  \uptri {X} {\ua j} {Xj} 
&\qquad
&  \uptri {\varnothing} {even} {\varnothing} 
&   \qquad \uptri {i} {odd} {i} 
  \\ \\
 i<X &&  X<j 
\end{matrix}
$$
and their $180^\circ$ rotations with arrows inverted.
\junk{AK: Sadly, the $180^\circ$ rotation of $\ua$ isn't $\da$ ... should we make
  them just $\uparrow$ and $\downarrow$? that wouldn't help, it's the direction of the arrow that needs reversing,
see correction above}
\junk{
\begin{align*}
&  \uptri {iX} {\da i} {X} \qquad   \downtri {iX} {\da i} {X} \qquad i<X
\\
&  \uptri {X} {\ua j} {Xj} \qquad   \downtri {X} {\ua j} {Xj} \qquad X<j
\\
&  \uptri {\varnothing} {even} {\varnothing} \qquad   \downtri {\varnothing} {even} {\varnothing}
\\
&  \uptri {i} {odd} {i} \qquad  \downtri {i} {odd} {i}
\end{align*}
} 
For any such puzzle, $\nu\in \Al_3^n$, and its content is that of $\lambda$
and $\mu$ put together, blanks removed, and $odd$s added (to make length $n$).

To compute structure constants for (single) Grothendieck polynomials,
allow the following pieces (which include the ones above):
\begin{align*}
&  \uptri {iX} {\da i} {X} \qquad i< X \cap [k,d] && \downtri {iX} {\da i} {X} \qquad i < X \cap [0,k-1]
\\
&  \uptri {X} {\ua j} {Xj} \qquad X<j \text{ or } j\ge k && \downtri {X} {\ua j} {Xj} \qquad X<j \text{ or } j< k
\\
&  \uptri {X} {even} {X} \qquad \#X \text{ even} &&   \downtri {\varnothing} {even} {\varnothing}
\\
&  \uptri {X} {odd} {X} \qquad \#X \text{ odd} && \downtri {i} {odd} {i}
\end{align*}
Then
$c^{\pi\rho}_\sigma=(-1)^{\ell(\pi)+\ell(\rho)-\ell(\sigma)} \#\{\text{such puzzles}\}$.
\end{thm}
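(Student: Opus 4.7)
The plan is to fit this theorem into the quantum-integrability machinery of this series, now using a type-$D$ minuscule representation as signaled in the abstract. Specifically, I would identify the diagonal-edge labels $X\subseteq \{0,\ldots,d\}$ as basis vectors in a (half-)spin representation of $\Uq(\mathfrak{so}_{2d+2}[z^\pm])$, and the horizontal-edge labels ($\da i$, $\ua j$, $even$, $odd$) as basis vectors in a fused vector representation. The listed puzzle pieces should then match exactly the non-vanishing matrix entries of the spin--vector $R$-matrix, and their $180^\circ$-rotated versions the entries of its inverse; the side conditions $i<X$, $X<j$, and the parities of $\#X$ are precisely what weight conservation forces in type $D$.

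The first step, largely routine, is to verify this matching piece-by-piece against an explicit form of the $R$-matrix. The second step is to show that with these identifications the puzzle partition function on the big triangle with boundary $(\lambda,\mu,\nu)$ computes a pairing $\langle m_\lambda\cdot m_\mu,\,m_\nu\rangle$ of motivic Segre classes in $T$-equivariant $K$-theory of a cotangent bundle of a partial flag variety. Setting equivariant parameters to zero and pushing forward to the underlying flag variety should reduce each motivic Segre class to a single Grothendieck polynomial, via the type-$D$ analogue of Lemma~\ref{lem:pullback} used for Theorem~\ref{thm:sepdesc}, whereupon the partition function equals the desired Grothendieck structure constant up to the expected sign.

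The main obstacle is the fusion step. A single overlap descent is handled by a single minuscule representation (as in Theorem~\ref{thm:sepdesc}); two overlap descents require a fusion of two copies, which is why the fused vector-representation labels acquire a richer structure, in particular the $even/odd$ split. Verifying that the fused $R$-matrix produces exactly the listed pieces --- no more, no less --- is the crux of the argument. Indeed, fusing three minuscule representations of type $D$ would no longer yield a minuscule factor and would introduce spurious pieces, which is consistent with the restriction to $over(\pi,\rho)\leq 2$. Once the fused $R$-matrix is pinned down, the sign $(-1)^{\ell(\pi)+\ell(\rho)-\ell(\sigma)}$ follows from the fact that every $K$-decoration contributes a factor of $-1$, together with the rigidity observation that the number of $K$-pieces in any puzzle with fixed boundary is forced to equal $\ell(\pi)+\ell(\rho)-\ell(\sigma)$.
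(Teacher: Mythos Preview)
Your overall strategy---quantum integrability with a type-$D$ quantized loop algebra and spin representations---is the right one and matches the paper. But several of your identifications are off in ways that would prevent the argument from going through, and you are missing the step that carries the actual technical weight.

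First, the representation theory. The algebra is $\Uq(\mathfrak{so}_{2(d+2)}[z^\pm])$, rank $d{+}2$, not $\mathfrak{so}_{2d+2}$: there is an extra Cartan coordinate $x_{\_}$ beyond $x_0,\ldots,x_d$, and this is what produces the \emph{even}/\emph{odd} horizontal labels (they are the weights $\pm x_{\_}$ in the vector representation $V_3=\CC^{2(d+2)}$). More importantly, the triangle pieces are \emph{not} entries of a spin--vector $R$-matrix; they are entries of the intertwiners $U\colon V_1\otimes V_2\to V_3$ and $D\colon V_3\to V_2\otimes V_1$ (with $V_1=spin_+$, $V_2=(spin_-)^*$), obtained from the factorization $\check R_{1,2}(q^{-2d})=DU$. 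The down-pointing triangles are entries of $D$, not of any inverse. Your description of the fusion as ``two copies for two overlap descents'' is also not how it works: there is a single fusion $V_1\otimes V_2\to V_3$ in both the separated and almost-separated cases; what changes is the algebra (type $A$ vs.\ type $D$) and the choice of face subspaces $V^A_a$.

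Second, and this is the real gap: you skip the $q\to 0$ limit, which is where all the work lies. Theorem~\ref{thm:motivic} gives a puzzle identity for motivic Segre classes at generic $q$; to reach Grothendieck polynomials one must (i) construct an explicit skew-symmetric form $B$ on the extended weight space satisfying Lemma~\ref{lem:twist}, (ii) twist $U,D$ by $q^{B/2}$ so that $\tilde S^\lambda=q^{-\ell(\lambda)}S^\lambda$, and (iii) renormalize the basis vectors so that every individual triangle fugacity stays finite as $q\to 0$. Step (iii) is a nontrivial linear-programming check; it succeeds for $U$ and $D$ but \emph{fails} for $\check R_{1,2}$, which is exactly why the theorem is nonequivariant only---your proposal to ``set equivariant parameters to zero and push forward'' does not address this and would not explain the asymmetry. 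The precise side conditions on the $K$-pieces (e.g.\ $i<X\cap[k,d]$ for up-triangles vs.\ $i<X\cap[0,k-1]$ for down-triangles) and the inversion-charge sign come out of this computation (\S\ref{ssec:almostspecdescB}--\ref{ssec:almostsepdescq0}), not from weight conservation alone.
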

Once again, the sign can be distributed over the triangles, according to the following rule: define the {\em inversion charge}\/ $\inv$ of a triangle
to be
\begin{align}\notag
\inv(\uptri {iX} {\da i} {X})&=\inv(\downtri {iX} {\da i} {X})=\#\{x\in X : x<i\}
\\\label{eq:inv}
\inv(\uptri {X} {\ua j} {Xj})&=\inv(\downtri {X} {\ua j} {Xj})=\#\{x\in X: x>j\}
\\\notag
\inv(\uptri {X} {\sss parity(\# X)} {X})&=\inv(\downtri {X} {\sss parity(\# X)\atop} {X})=\lfloor \# X/2\rfloor
\end{align}
Then $\ell(\pi)+\ell(\rho)-\ell(\sigma)$ equals the sum of inversion charges of the triangles of every puzzle
that contributes to $c^{\pi\rho}_\sigma$; so that assigning to each triangle $(-1)^{\inv}$ produces
the desired overall sign.

\junk{there's 3 notations for the last letter of the alphabet: $a,m,d$. rationalize}

We emphasize, with regret, that we {\em do not} have a version of theorem
\ref{thm:almostsepdesc} for double Schubert polynomials (much less
double Grothendieck), nor do we expect one.
\junk{ This is related to our failure to obtain an equivariant puzzle rule
  for $3$-step Schubert calculus in \cite{artic71}, and we comment on
  it in \S\ref{ssec:almostsepdescR}.}

\begin{ex}
If $\pi=2543167$ and $\rho=4132567$, then $O(\pi,\rho)=\{2,3\}$:
\[
\begin{tikzpicture}[xscale=1.05,yscale=.5]
\node at (-1,4) {$\pi$}; \foreach \s [count=\x] in {2,5,4,3,1,6,7} \node at (\x,4) {$\s$};
\node at (-1,3) {$\omega_1$}; \foreach \s [count=\x] in {\_,\_,2,3,4,4,4} \node at (\x,3) {$\vphantom{0}\s$};
\node at (-1,2) {$\omega_3$}; \foreach \s [count=\x] in {\llap{$\da$} 0,\llap{$\da$} 1,odd,\llap{$\ua$} 3,\llap{$\ua$} 4,\llap{$\ua$}4,\llap{$\ua$}4} \node at (\x,2) {$\s$};
\node at (-1,1) {$\omega_2$}; \foreach \s [count=\x] in {0,1,2,\_,\_,\_,\_} \node at (\x,1) {$\vphantom{0}\s$};
\node at (-1,0) {$\rho$}; \foreach \s [count=\x] in {4,1,3,2,5,6,7} \node at (\x,0) {$\s$};
\draw (3.5,4.5) -- (3.5,-0.5);
\draw (4.5,4.5) -- (4.5,1.5);
\draw (1.5,2.5) -- (1.5,-0.5);
\draw (2.5,4.5) -- (2.5,0.5);
\end{tikzpicture}
\]
from which we derive $\lambda=1\_20\_\_\_$ and $\mu=4\_32\_44$.
The seven puzzles computing the product of the Schubert polynomials are these:
\begin{center}
\tikzset{every picture/.style={scale=0.5,every node/.style={scale=0.5}}}
\input ex3.tex
\end{center}
This leads to the following identity:
\[
\Sc^{2543167}\Sc^{4132567}
\ =\ 
\Sc^{6352147}+\Sc^{5632147}+\Sc^{5462137}+\Sc^{6432157}+\Sc^{6523147}+\Sc^{7342156}+\Sc^{7253146}
\]
\end{ex}

\begin{rmk}\label{rmk:almostsepdescduality}
  There is an evident duality on the $H^*$ puzzles that compute
  almost-separated descent; it flips them left-right while taking
  $i \mapsto d-i$, and reversing the arrow.
  Again, this is a shadow of Grassmannian duality.
  However, this is not a symmetry of the $K$-theoretic puzzle pieces
  computing almost-separated descent (see \S \ref{ssec:almostvs2});
  indeed, one can use it to obtain a second, distinct, rule for
  $K$-theoretic almost-separated descent Schubert problems.
\end{rmk}


\subsection{Segre motivic classes}\label{ssec:motivic}
In the geometric framework of \cite{artic71}, (double) Schubert and
Grothendieck polynomials represent classes in the appropriate
cohomology rings of flag varieties.  However, the quantum
integrability pointed to a $q$-deformation of Schubert classes, and in
\cite{artic80} we showed that there exist puzzle formul\ae\ for
multiplying the resulting {\em Segre motivic classes}.

\junk{In {\em cotangent Schubert calculus} \rem{ref}, an important
  generalisation is to consider so-called of Schubert cells; these
  classes can be viewed (roughly) as a one-parameter deformation of
  Schubert classes.}
In fact, our theorems~\ref{thm:sepdesc} and \ref{thm:almostsepdesc}
both follow from more general theorems which can be loosely stated as
\begin{thm}\label{thm:loose}
  Drop from theorems~\ref{thm:sepdesc} and \ref{thm:almostsepdesc}
  the conditions like $i<j$ or $i\leq k<j$ in the definition of puzzle piece,
  thereby allowing more general pieces. Continue to require, as in those
  theorems, that one's permutations $\pi$, $\rho$ have $over(\pi,\rho) = 1,2$
  (for theorems~\ref{thm:sepdesc},\ref{thm:almostsepdesc} respectively).

  Then for suitable ``fugacities'' on the pieces, one obtains puzzle rules
  for multiplying the equivariant Segre motivic classes of the
  corresponding Schubert cells $X^\pi_\circ,X^\rho_\circ$ in appropriate
  partial flag varieties.
\end{thm}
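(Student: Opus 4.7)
The plan is to deduce Theorem~\ref{thm:loose} directly from the quantum integrability framework of paper~II (\cite{artic80}), from which both Theorems~\ref{thm:sepdesc} and~\ref{thm:almostsepdesc} will fall out as specializations.

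First, I set up the integrable model. For the $over(\pi,\rho)=1$ case, take two families of minuscule evaluation representations of $\Aqg$ whose weights are labeled by the alphabets $\Al_1$ and $\Al_2$ (the blank plays the role of the ``missing'' minuscule weight); the horizontal edges are labeled by basis vectors of the fusion product, indexed by $\Al_3$. For the $over(\pi,\rho)=2$ case, replace $\gqg$ of type $A$ by the analogous type $D$ construction, taking minuscule (vector and spinor) representations whose weight structure manifests as the $\ua$/$\da$/$even$/$odd$ decoration of $\Al_3$ in \eqref{eq:C2}; the duplication of the letter $k$ on the NW and SE sides corresponds to the fact that the spectral decomposition now involves a non-minuscule fusion channel.

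Second, the ``suitable fugacities'' are defined to be the matrix elements of the appropriate trigonometric $R$-matrix $R(z_1/z_2)$ between the incoming pair of basis vectors (labels on the two slanted edges of a triangle) and the outgoing pair (labels on the horizontal edge and the remaining slanted edge). Concretely, each puzzle piece is a coefficient in the expansion $R(z_1/z_2)(e_a\otimes e_b)=\sum_{c,d}R^{cd}_{ab}(z_1/z_2)\, e_c\otimes e_d$; the equivariant parameters $y_j$ are read as spectral parameters along the boundary, while $q$ is identified with the motivic parameter controlling the Segre classes. The restrictions $i<j$, $i\le k<j$, etc.\ in Theorems~\ref{thm:sepdesc}--\ref{thm:almostsepdesc} correspond exactly to the vanishing of $R$-matrix entries in the $q\to \infty$ (Grothendieck) or $q\to 1$ (Schubert) limits, so dropping them produces the more general pieces of Theorem~\ref{thm:loose}.

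Third, I invoke the Maulik--Okounkov geometric construction identifying the equivariant Segre motivic classes of Schubert cells $X^\pi_\circ$ in the relevant partial flag variety with the stable envelopes in the tensor product representation, via their triangularity and normalization; Remark~\ref{rmk:breakties} and Lemma~\ref{lem:pullback} reduce us to the maximal-alphabet (full flag) case, so it suffices to establish this identification there. The Yang--Baxter equation then implies that composing $R$-matrices in the rectangular configuration of a triangular puzzle produces a matrix element whose value is independent of the internal routing, and equals the boundary-to-boundary transfer matrix element --- which on one side is the stated puzzle partition function, and on the other side (by the stable-envelope dictionary) is precisely the structure constant for multiplication of Segre motivic classes. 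Specializing $q$ and collapsing fugacities recovers Theorems~\ref{thm:sepdesc} and~\ref{thm:almostsepdesc}, including the $K$- and $K_T$-refinements in the type $A$ case and the inversion-charge sign \eqref{eq:inv} in type $D$.

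The main obstacle is Step three: pinning down the correct normalization and polarization so that the stable envelopes in the type $D$ tensor product really agree with the Segre motivic classes of Schubert cells in the almost-separated-descent partial flag variety. The absence of an equivariant double-Schubert/double-Grothendieck version in Theorem~\ref{thm:almostsepdesc} is a reflection of this: the type $D$ fusion decomposes nontrivially (it is not a single irreducible), so the stable envelopes can only be matched to Segre classes after a projection that breaks the $T$-equivariant refinement. Once this identification is established in the ``generic'' (fully motivic) setting, the remaining specializations and the combinatorial translation into the puzzle pieces of Theorems~\ref{thm:sepdesc} and~\ref{thm:almostsepdesc} are mechanical.
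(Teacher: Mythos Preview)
Your outline captures the right high-level strategy --- build the puzzle partition function from intertwiners of a quantized loop algebra and identify the boundary expressions with motivic Segre classes --- but several concrete details are off, and the key technical checks that make the argument work are absent.

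First, the fugacities on \emph{triangles} are not $R$-matrix entries; they are entries of the fusion intertwiners $U\colon V_1(q^\alpha z)\otimes V_2(q^{-\alpha}z)\to V_3(z)$ and $D\colon V_3(z)\to V_2(q^{-\alpha}z)\otimes V_1(q^\alpha z)$, related to $\check R_{1,2}$ only through the factorization $\check R_{1,2}(q^\alpha z,q^{-\alpha}z)=D(z)U(z)$ at a special value of the spectral parameter. The $R$-matrix itself furnishes the \emph{rhombi}. Second, the Schubert/Grothendieck limits are $q\to 0$ (after a twist by $q^{\frac12 B}$ and a diagonal renormalization), not $q\to\infty$ or $q\to 1$. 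Third, in type $D$ the assignment is reversed from what you wrote: $V_1=spin_+$, $V_2=(spin_-)^*$ sit on the NW/NE sides and $V_3=\CC^{2(d+2)}$ (the vector representation) on the bottom; the $\ua/\da/even/odd$ labels are weights $\pm x_i$ of the vector representation.

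More importantly, the substance of the proof lies in verifying four structural conditions (labelled (a)--(d) in \S\ref{ssec:genthm}) for the chosen \emph{face subspaces} $V_a^A\leq V_a$: (a) that $\mathbf P$ preserves the face weights, (b) that the puzzle with sorted bottom $\omega_3$ is unique with fugacity $1$ (this is Propositions~\ref{prop:trivpuzzlea} and~\ref{prop:trivpuzzled}, and it fixes the normalizations), (c) that the restricted $\check R^A_{a,a}$ coincides with the type $A$ $R$-matrix of \eqref{eq:Rsingle} (via Proposition~\ref{prop:weaktostrong}), and (d) that the parabolics satisfy $P_3\le P_1\cap P_2$. These checks, together with the citation of \cite[Thm.~5]{artic71} and the identification of $S^\lambda_a$ with motivic Segre classes from \cite[\S2]{artic80}, \emph{are} the proof; invoking Maulik--Okounkov stable envelopes and Yang--Baxter in the abstract does not substitute for them. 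Finally, your diagnosis of the type $D$ equivariance failure is not the paper's: the obstruction is that the tensor product $spin_+\otimes spin_-^*$ has more than two irreducible summands for $d\ge 3$, and correspondingly the linear program governing the $q\to 0$ twist/renormalization succeeds for $U,D$ but fails for $\check R_{1,2}$ (see the end of \S\ref{ssec:q0} and the remark in \S\ref{ssec:almostsepdescR}).
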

\junk{PZJ: very vague. AK: how's the above now. PZJ: OK}

To recover Theorems~\ref{thm:sepdesc} and \ref{thm:almostsepdesc} will
require taking $q\to 0$, which will work well in separated-descent but
will introduce infinities in almost-separated-descent, unless we first
give up $T$-equivariance. As long as we stick to finite $q$, though,
we get an equivariant rule even for the $over(\pi,\rho)=2$
almost-separated-descent case.

\junk{AK: I was going to say ``has divergences'' but decided that was
  too physicsy}

\rem[gray]{AK: maybe a comment about the dual basis for $K$-theory? PZJ:
  haven't checked but should work, not sure it's worth mentioning.}

\subsection{Plan of the paper}
The details of Theorem~\ref{thm:loose} will be made more precise in
what follows: the necessary setup will be described in \S\ref{sec:setup},
see in particular Theorem~\ref{thm:motivic} (and an application
to the computation of Euler characteristics of triple intersections,
Theorem~\ref{thm:euler}).
We will then realize
this setup in two cases: separated descents in \S\ref{sec:sepdesc} and
almost-separated descents in \S\ref{sec:almostsepdesc}.
In each case, we will show how taking the quantum parameter $q$ to $0$ in
Theorem~\ref{thm:loose} reproduces Theorems~\ref{thm:sepdesc}
(see \S\ref{ssec:sepdescq0} and \S\ref{ssec:sepdescq0T}) and \ref{thm:almostsepdesc}
(see \S\ref{ssec:almostsepdescq0}), respectively.

\junk{both consequences of: main theorem 3 about motivic Chern classes. though in intro we may not give details
  of thm 3 cause requires R-matrix etc not to mention the whole geometric setup}

\subsection{Relations to prior work}
The two special cases of Theorem~\ref{thm:sepdesc} that fall within
the ``Schur times Schubert'' (i.e. $\#D(\pi)=1$ or $\#D(\rho)=1$) problem
were first handled in \cite{Kogan} using a bijection on pipe dreams.
In \cite{KYong} the $\#D(\pi)=1$ case was given a streamlined proof
using ``truncation'', and extended to $K$-theory; that technique was
then rediscovered in \cite{Assaf}.  However, the truncation approach
doesn't seem to produce an equivariant formula.

After we announced Theorem \ref{thm:sepdesc}, an alternate formula
(in nonequivariant $H^*$ only) counting certain tableaux was given in \cite{Huang}.
Huang and Gao have since\footnote{personal communication} found a
correspondence between Huang's tableaux and our separated-descent puzzles.

We include another approach to solving 
separated-descent problems positively,
making use of the ring endomorphism from \cite[\S 4.3]{BergeronSottile}.
Motivated by an embedding $Flags(\CC^n) \into Flags(\CC^{n+1})$,
those authors define a ring endomorphism
$$
  \Psi_i\colon \ZZ[x_1,x_2,\ldots] \to \ZZ[x_1,x_2,\ldots],
                                          \qquad\qquad
  x_j \mapsto
                \begin{cases}
                  x_j &\text{if }i<j \\
                  0 &\text{if }i=j \\
                  x_{j-1} &\text{if }i > j
                \end{cases}
$$
It is easy to prove that
\begin{eqnarray*}
  \Psi_i(S_\pi) &=& S_\pi \qquad\text{if $\pi$'s last descent is $<i$} \\
  \Psi_i(S_{I_1\oplus\rho}) = \Psi_1(S_{I_1\oplus\rho}) &=&
   S_\rho \qquad\text{if $\rho$'s first descent is $\geq i$}                  
\end{eqnarray*}
(where $\oplus$ is to be interpreted on the permutation matrices).
Hence, for $\pi,\rho\in S_n$ and $\pi$'s last descent $\leq k \leq $
$\rho$'s first descent,
$$ S_\pi S_\rho 
= (\Psi_{k+1})^n \big(S_\pi\big)\ (\Psi_{k+1})^n\big(S_{I_n\oplus\rho}\big)
= (\Psi_{k+1})^n\big(S_\pi S_{I_n\oplus\rho}\big)
= (\Psi_{k+1})^n\big(S_{\pi\oplus\rho}\big)
$$
where the latter can be computed positively (in nonequivariant $H^*$) from the
Schubert-positive formula for $\Psi_i(S_\sigma)$ given in 
\cite[Theorem 4.2.4(i)]{BergeronSottile}.

\section{Setup of proofs}\label{sec:setup}
\subsection{Tensor calculus}\label{ssec:tensor}
We use the same tensor calculus as in \cite{artic71,artic80}. Starting
with a quantized loop algebra\footnote{Note that this $\lie g$ is not
  the Lie algebra of the group $GL(n)$ acting on our flag manifolds.
  Rather, as explained in \cite{artic80}, (the cotangent bundles of)
  our flag manifolds are quiver varieties for the Dynkin diagram of $\lie g$
  for particular choices of dimension vectors.}
$\gqg$, we consider three families of
representations $V_a(z)$, where $a=1,2,3$ and $z$ is a formal parameter;
an integer $\alpha\in\ZZ$ is also specified. We then consider intertwiners
\begin{equation}\label{eq:RUD}
\begin{array}{rrcl}
 {\check R}_{a,b}(z',z''): &V_a(z')\otimes V_b(z'') &\to& V_b(z'')\otimes V_a(z') \\
  U(z): &V_1(q^\alpha z)\otimes V_2(q^{-\alpha}z) &\to& V_3(z) \\
  D(z): &V_3(z) &\to& V_2(q^{-\alpha}z)\otimes V_1(q^\alpha z) 
\end{array}
\end{equation}
and these are unique up to normalization (to be specified in each case below).
The three intertwiners are related by the factorization property
\begin{equation}\label{eq:Rfac}
  \check R_{1,2}(q^\alpha z,q^{-\alpha}z) = D(z) U(z)
\end{equation}

Combining these basic intertwiners allows one to build many intertwiners
acting on tensor products of the form $\bigotimes_i V_{a_i}(z_i)$.
Because for generic $z_i$ these tensor products are irreducible 
\cite{Chari-braid}, Schur's lemma leads to various identities
satisfied by these intertwiners, among which are the {\em Yang--Baxter}\/
equation for the $\check R$ matrices, and other similar relations
involving $\check R_{a,b}$, $U$ and $D$.  These identities are best
described diagrammatically, and we shall not repeat them here,
referring to \cite[Prop.~5]{artic80} for details.

\subsection{Puzzles}
We are particularly interested in $\check R_{1,2}$, for which we will use the graphical depiction
\[
\check R_{1,2} = \rh{}{}{}{}
\]
Similarly, the $U$ and $D$ matrices will be drawn as
$U=\uptri{}{}{}$, $D=\downtri{}{}{}$. Time flows downwards on our diagrams.

\junk{AK: presumably the next sentence should be ``Consider a tensor
  $\bf P$'' ? And why mention $D$, as it's not used. PZJ: fixed}
Consider a tensor $\mathbf P$ built out of $\check R_{1,2}$ and $U$, forming
an equilateral triangle of size $n$, e.g., at $n=4$,
\[
\mathbf P=
\begin{tikzpicture}[math mode,nodes={edgelabel},x={(-0.577cm,-1cm)},y={(0.577cm,-1cm)},baseline=(current  bounding  box.center)]
\draw[thick] (0,0) -- node[pos=0.5] {} ++(0,1); \draw[thick] (0,0) -- node[pos=0.5] {} ++(1,0);
\draw[thick] (0,1) -- node[pos=0.5] {} ++(0,1); \draw[thick] (0,1) -- node[pos=0.5] {} ++(1,0); 
\draw[thick] (0,2) -- node[pos=0.5] {} ++(0,1); \draw[thick] (0,2) -- node[pos=0.5] {} ++(1,0); 
\draw[thick] (0,3) -- node[pos=0.5] {} ++(0,1); \draw[thick] (0,3) -- node[pos=0.5] {} ++(1,0); \draw[thick] (0+1,3) -- node {} ++(-1,1); 
\draw[thick] (1,0) -- node[pos=0.5] {} ++(0,1); \draw[thick] (1,0) -- node[pos=0.5] {} ++(1,0);
\draw[thick] (1,1) -- node[pos=0.5] {} ++(0,1); \draw[thick] (1,1) -- node[pos=0.5] {} ++(1,0);
\draw[thick] (1,2) -- node[pos=0.5] {} ++(0,1); \draw[thick] (1,2) -- node[pos=0.5] {} ++(1,0); \draw[thick] (1+1,2) -- node {} ++(-1,1); 
\draw[thick] (2,0) -- node[pos=0.5] {} ++(0,1); \draw[thick] (2,0) -- node[pos=0.5] {} ++(1,0);
\draw[thick] (2,1) -- node[pos=0.5] {} ++(0,1); \draw[thick] (2,1) -- node[pos=0.5] {} ++(1,0); \draw[thick] (2+1,1) -- node {} ++(-1,1); 
\draw[thick] (3,0) -- node[pos=0.5] {} ++(0,1); \draw[thick] (3,0) -- node[pos=0.5] {} ++(1,0); \draw[thick] (3+1,0) -- node {} ++(-1,1); 
\end{tikzpicture}
\]
intertwining $V_1(q^\alpha z_1)\otimes\cdots\otimes V_1(q^\alpha z_n)\otimes V_2(q^{-\alpha} z_1)\otimes\cdots\otimes V_2(q^{-\alpha} z_n) \to V_3(z_1)\otimes\cdots\otimes V_3(z_n)$.

If we now fix appropriate bases of the various spaces $V_a(z)$, we can
expand $\mathbf P$ by picking a basis vector for each edge of the
diagram; we represent this by marking the edges with the labels of the
chosen basis elements, e.g., if the labels are $0, 1, 10$, then one of
the summands of $\mathbf P$ is pictured as
\junk{AK: shouldn't the example include an eqvt piece?  PZJ: oops,
  this should be an equivariant puzzle, fixed.  this issue reappears
  later in the proof of prop 1 and 2 where I draw what are really
  equivariant puzzles as nonequivariant ones: in a sense it's an abuse
  of notation to draw an equivariant puzzle as nonequivariant even if
  one can fill in the horizontal edges.  I tried explaining below}
\begin{equation}\label{eq:exa}
\def\thescale{1.4}
\def\posa{0.5}\def\posb{0.5}
\begin{tikzpicture}[baseline=(current  bounding  box.center),math mode,nodes={edgelabel},x={(-0.577cm,-1cm)},y={(0.577cm,-1cm)},scale=\thescale]\useasboundingbox (0,0) -- (3+0.5,0) -- (0,3+0.5);
\draw[thick] (0,0) -- node[pos=\posa] {0} ++(0,1); \draw[thick] (0,0) -- node[pos=\posb] {0} ++(1,0); 
\draw[thick] (0,1) -- node[pos=\posa] {0} ++(0,1); \draw[thick] (0,1) -- node[pos=\posb] {10} ++(1,0); 
\draw[thick] (0,2) -- node[pos=\posa] {1} ++(0,1); \draw[thick] (0,2) -- node[pos=\posb] {10} ++(1,0); \draw[thick] (0+1,2) -- node {0} ++(-1,1); 
\draw[thick] (1,0) -- node[pos=\posa] {1} ++(0,1); \draw[thick] (1,0) -- node[pos=\posb] {1} ++(1,0); 
\draw[thick] (1,1) -- node[pos=\posa] {0} ++(0,1); \draw[thick] (1,1) -- node[pos=\posb] {0} ++(1,0); \draw[thick] (1+1,1) -- node {0} ++(-1,1); 
\draw[thick] (2,0) -- node[pos=\posa] {10} ++(0,1); \draw[thick] (2,0) -- node[pos=\posb] {0} ++(1,0); \draw[thick] (2+1,0) -- node {1} ++(-1,1); 
\end{tikzpicture}
\junk{
\qquad
+
\qquad
\def\thescale{1.4}
\def\posa{0.5}\def\posb{0.5}
\begin{tikzpicture}[baseline=(current  bounding  box.center),math mode,nodes={edgelabel},x={(-0.577cm,-1cm)},y={(0.577cm,-1cm)},scale=\thescale]\useasboundingbox (0,0) -- (3+0.5,0) -- (0,3+0.5);
\draw[thick] (0,0) -- node[pos=\posa] {0} ++(0,1); \draw[thick] (0,0) -- node[pos=\posb] {0} ++(1,0); 
\draw[thick] (0,1) -- node[pos=\posa] {0} ++(0,1); \draw[thick] (0,1) -- node[pos=\posb] {0} ++(1,0); 
\draw[thick] (0,2) -- node[pos=\posa] {1} ++(0,1); \draw[thick] (0,2) -- node[pos=\posb] {10} ++(1,0); \draw[thick] (0+1,2) -- node {0} ++(-1,1); 
\draw[thick] (1,0) -- node[pos=\posa] {0} ++(0,1); \draw[thick] (1,0) -- node[pos=\posb] {1} ++(1,0); 
\draw[thick] (1,1) -- node[pos=\posa] {1} ++(0,1); \draw[thick] (1,1) -- node[pos=\posb] {10} ++(1,0); \draw[thick] (1+1,1) -- node {0} ++(-1,1); 
\draw[thick] (2,0) -- node[pos=\posa] {10} ++(0,1); \draw[thick] (2,0) -- node[pos=\posb] {0} ++(1,0); \draw[thick] (2+1,0) -- node {1} ++(-1,1); 
\end{tikzpicture}
}
\end{equation}
Such a picture is called an (equivariant) \defn{puzzle}. The \defn{fugacity}\/ $fug(P)$
of a puzzle $P$ is the product of matrix entries of the individual tensors
on the diagram.  By convention, we add the requirement that puzzles
have nonzero fugacity. 
We can then state that any matrix element of the tensor
$\mathbf P$ can be written as
\[
\tikz[scale=1.8,baseline=0.5cm]{\uptri{\lambda}{\nu}{\mu}}
:=
\left< e_\nu^*, \mathbf P\,  (e_\lambda\otimes e_\mu)\right>
=\sum_{\substack{\text{puzzles $P$}\\\text{with sides $\lambda,\mu\,\nu$}}} fug(P)
\]
where $\lambda$, $\mu$, $\nu$ are three strings of length $n$ in the labels,
$e_\nu^*=\bigotimes_{k=1}^n e_{3,\nu_k}^*$,
$e_\lambda=\bigotimes_{k=1}^n e_{1,\lambda_k}$,
$e_\mu=\bigotimes_{k=1}^n e_{2,\mu_k}$,
and the $e_a$s and $e_a^*$s are basis and dual basis elements of the $V_a$s.

In what follows, we require our representations to have one-dimensional weight spaces\footnote{This ``quasiminuscule'' condition was violated for $desc(\pi,\rho)=4$ in \cite{artic80},
but no such issue will arise in the present work.}, and our bases to consist of weight vectors.

If we specialize the parameters $z_1,\ldots,z_n$ to be equal, then according to factorization property 
\eqref{eq:Rfac},
matrix elements of the tensor $\mathbf P$ are now expressed as sums over {\em nonequivariant}\/ puzzles, e.g.,
\begin{equation}\label{eq:exb}
\def\thescale{1.4}
\def\posa{0.5}\def\posb{0.5}
\begin{tikzpicture}[baseline=(current  bounding  box.center),math mode,nodes={edgelabel},x={(-0.577cm,-1cm)},y={(0.577cm,-1cm)},scale=\thescale]\useasboundingbox (0,0) -- (3+0.5,0) -- (0,3+0.5);
\draw[thick] (0,0) -- node[pos=\posa] {0} ++(0,1); \draw[thick] (0,0) -- node[pos=\posb] {0} ++(1,0); \draw[thick] (0+1,0) -- node {0} ++(-1,1); 
\draw[thick] (0,1) -- node[pos=\posa] {0} ++(0,1); \draw[thick] (0,1) -- node[pos=\posb] {0} ++(1,0); \draw[thick] (0+1,1) -- node {0} ++(-1,1); 
\draw[thick] (0,2) -- node[pos=\posa] {1} ++(0,1); \draw[thick] (0,2) -- node[pos=\posb] {10} ++(1,0); \draw[thick] (0+1,2) -- node {0} ++(-1,1); 
\draw[thick] (1,0) -- node[pos=\posa] {0} ++(0,1); \draw[thick] (1,0) -- node[pos=\posb] {1} ++(1,0); \draw[thick] (1+1,0) -- node {10} ++(-1,1); 
\draw[thick] (1,1) -- node[pos=\posa] {1} ++(0,1); \draw[thick] (1,1) -- node[pos=\posb] {1} ++(1,0); \draw[thick] (1+1,1) -- node {1} ++(-1,1); 
\draw[thick] (2,0) -- node[pos=\posa] {0} ++(0,1); \draw[thick] (2,0) -- node[pos=\posb] {0} ++(1,0); \draw[thick] (2+1,0) -- node {0} ++(-1,1); 
\end{tikzpicture}
\end{equation}

Even for equivariant puzzles, it is customary to draw and label the horizontal diagonal of rhombi if the resulting pair of triangles corresponds to
nonzero entries of $U$ and $D$ (such a label will be unique if it exists since weight spaces are one-dimensional, so no information is added);
e.g., for the example \eqref{eq:exa} above, one would draw
\[
\def\thescale{1.4}
\def\posa{0.5}\def\posb{0.5}
\begin{tikzpicture}[math mode,nodes={edgelabel},x={(-0.577cm,-1cm)},y={(0.577cm,-1cm)},scale=\thescale]\useasboundingbox (0,0) -- (3+0.5,0) -- (0,3+0.5);
\draw[thick] (0,0) -- node[pos=\posa] {0} ++(0,1); \draw[thick] (0,0) -- node[pos=\posb] {0} ++(1,0); \draw[thick] (0+1,0) -- node {0} ++(-1,1); 
\draw[thick] (0,1) -- node[pos=\posa] {0} ++(0,1); \draw[thick] (0,1) -- node[pos=\posb] {10} ++(1,0); 
\draw[thick] (0,2) -- node[pos=\posa] {1} ++(0,1); \draw[thick] (0,2) -- node[pos=\posb] {10} ++(1,0); \draw[thick] (0+1,2) -- node {0} ++(-1,1); 
\draw[thick] (1,0) -- node[pos=\posa] {1} ++(0,1); \draw[thick] (1,0) -- node[pos=\posb] {1} ++(1,0); \draw[thick] (1+1,0) -- node {1} ++(-1,1); 
\draw[thick] (1,1) -- node[pos=\posa] {0} ++(0,1); \draw[thick] (1,1) -- node[pos=\posb] {0} ++(1,0); \draw[thick] (1+1,1) -- node {0} ++(-1,1); 
\draw[thick] (2,0) -- node[pos=\posa] {10} ++(0,1); \draw[thick] (2,0) -- node[pos=\posb] {0} ++(1,0); \draw[thick] (2+1,0) -- node {1} ++(-1,1); 
\end{tikzpicture}
\]
With this convention, equivariant puzzles are now made of
two types of \defn{puzzle pieces}: triangles, and rhombi that cannot be bisected in the way described above.
These remaining rhombi are the ``equivariant'' rhombi,
i.e., the ones whose contribution vanishes as one specializes the parameters
$z_1,\ldots,z_n$ to be equal. The drawback to bisecting a ``nonequivariant
rhombus'' is that its fugacity may not be the product of the
nonequivariant fugacities of the two triangles;
see e.g. the discussion just after example \ref{ex:k01}.

\subsection{The general theorem}\label{ssec:genthm}
Fix a Cartan subalgebra $\lie h \leq \lie g$.  (We don't use $\lie t$,
as we reserve $T$ for the group that acts geometrically,
on the partial flag manifold.)
The $\lie h$-weights in a representation $V_a(z)$ are independent of $z$,
and their convex hull is called the representation's \defn{weight polytope}.
To a face $F$ of the weight polytope, we associate a \defn{face subspace}
of $V_a(z)$, the direct sum of the weight spaces with weight in $F$.

For each $a=1,2,3$, pick a face subspace $V^A_a(z)$ of $V_a(z)$.
An easy lemma based on the $\lie h$-equivariance of $\check R_{a,a}(z',z'')$ 
shows that it
sends $V_a^A(z')\otimes V_a^A(z'')$ to $V_a^A(z'')\otimes V_a^A(z')$.

In what follows we fix an \emph{ordered} basis (with conditions on the
order to come later) of weight vectors of each $V_a^A(z)$,
with label set $L_a$.  Weight spaces of $\bigotimes_{i=1}^n V^A_a(z_i)$
are naturally indexed by weakly increasing strings $\omega_a\in L_a^n$;
let us denote them $[\bigotimes_{i=1}^n V^A_a(z_i)]_{\omega_a}$.

Pick three such weakly increasing strings $\omega_a$, $a=1,2,3$
whose corresponding weights satisfy $\omega_3 = \omega_1 + \omega_2$.
We will need the following crucial properties:
\begin{enumerate}[(a)]
\item $\mathbf P$ maps
  $[\Tensor_{i=1}^n V^A_1(q^\alpha z_i)]_{\omega_1}
  \tensor [\Tensor_{i=1}^n V^A_2(q^{-\alpha} z_i)]_{\omega_2}$
  to $[\Tensor_{i=1}^n V^A_3(z_i)]_{\omega_3}$. \\
  This follows straightforwardly
  from two facts: $\mathbf P$ is weight-preserving, and, if $V^F \leq V$ is
  a face subspace (where the face is given by maximizing dot product against
  some coweight $\eta$) then $(V^F)^{\tensor n} \leq V^{\tensor n}$ is again
  a face subspace (for the same $\eta$).
\item Inversely,
  $e^*_{\omega_3} \mathbf P|_{[\Tensor_{i=1}^n V^A_1(q^\alpha z_i)]_{\omega_1}
    \tensor [\Tensor_{i=1}^n V^A_2(q^{-\alpha} z_i)]_{\omega_2}}
  = e_{\omega_1}^*\otimes e_{\omega_2}^*$.
\end{enumerate}

\junk{AK: 
  there should probably be an ``In other words, puzzles
  such that ... are ...'' paragraph here, spelling out the details
  then to be checked individually for sep-desc and almost-sep-desc}

In puzzle terms, (a) says that if the NW, NE sides of a puzzle have
{\em contents} $\omega_1,\omega_2$, then the S side must have
{\em content} $\omega_3$, whereas (b) says that if the S side is {\em actually}
$\omega_3$ (the weakly increasing string)
then the NW, NE sides must {\em actually} be $\omega_1,\omega_2$.

\junk{
\rem{AK: at the moment the next paragraph seems just all wrong}

These are implied ??? by the following extra structure. Pick coweights
$\eta_i, i=1,2,3$ whose values on the weight polytopes of $V_i(z)$ are
maximized on the chosen faces. If $\eta_3 = \eta_1+\eta_2$, then (a), (b) follow
by weight considerations: the subspaces in (a),(b) are the face subspaces of 
$\bigotimes_{i=1}^n V^A_1(q^\alpha z_i)\otimes
 \bigotimes_{i=1}^n V^A_2(q^{-\alpha} z_i)$ and
$\bigotimes_{i=1}^n V^A_3(z_i)$ on which $\eta_3$ is maximized.

\rem{AK: The subtlety is that the while $\eta_i$ $(i=1,2)$ is constant on 
  $\bigotimes_{i=1}^n V^A_1(q^\alpha z_i)$, the pair $(\eta_1,\eta_2)$
  isn't, nor is its sum $\eta_3$. Worse, the maximum value of $\eta_3$ may
  be strictly less than the sum of the two maxima.
  Incidentally, $P$ is equivariant w.r.t. the Levi centralizing
  $\eta_1,\eta_2$, can we do anything with that?}
}

We now fix the normalization of $\check R_{a,a}(z',z'')$ by requiring that it send the highest weight vector of $V_a^A\otimes V_a^A$ to itself.
(We'll see later that this matches the natural geometric normalization of $R$-matrices.)
Denote by $\check R^A_{a,a}(z',z'')$ the restriction of
$\check R_{a,a}(z',z'')$ to $V_a^A(z')\otimes V_a^A(z'')$; and $W = \mathcal S_n$.

To a permutation $\sigma\in W$ is associated an ``$\check R$-matrix''
$\check R^A_{\sigma,a}$ from $V^A_a(z_1)\otimes\cdots\otimes V^A_a(z_n)$ to 
$V^A_a(z_{\sigma^{-1}(1)})\otimes\cdots\otimes V^A_a(z_{\sigma^{-1}(n)})$; explicitly, if $\sigma=s_{i_1}\ldots s_{i_\ell}$ where the $s_i$
are elementary transpositions, then $\check R^A_{\sigma,a}$ is the ordered product of $\check R$-matrices 
$\check R^A_{a,a}(z_{s_{i_{\ell}}\ldots s_{i_{k+1}}(i_k)},z_{s_{i_{\ell}}\ldots s_{i_{k+1}}(i_k+1)})$
acting on the $i_k^{\rm th}$ and $(i_k+1)^{\rm th}$ spaces of the tensor product.

We consider the matrix elements %
\begin{equation}\label{eq:defS}
S^\lambda_a|_{\sigma} := \left< e_\omega^*, \check R^A_{\sigma,a} e_\lambda\right>
\end{equation}
where $\lambda\in L_a^n$,
and $\omega=\text{sort}(\lambda)$.

One of the key results of \cite{artic71} is:
\begin{thm}[{\cite[Thm.~5]{artic71}}]\label{thm:puzzle}
In the setup above, 
conditions (a) and (b) imply,
given strings $\lambda\in L_1^n$ and $\mu\in L_2^n$ with $sort(\lambda)=\omega_1$ and $sort(\mu)=\omega_2$,
for each $\sigma\in W$,
the following puzzle identity in $frac(\ZZ[z_1,\ldots,z_n,q])$:
\begin{equation}\label{eq:puzzle}
\sum_{\nu\in L_3^n} \tikz[scale=1.8,baseline=0.5cm]{\uptri{\lambda}{\nu}{\mu}}    \ S^\nu_3|_{\sigma}
\quad =\quad
S^\lambda_1|_{\sigma}\  S^\mu_2|_\sigma
  \end{equation}
\end{thm}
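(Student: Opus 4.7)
The plan is to unwind both sides as matrix elements of operators on tensor products, then commute the ``sorting'' $\check R$-matrix on the S-side past $\mathbf P$ to matched $\check R$-matrices on the NW- and NE-sides, and finally apply condition (b).

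After expansion of the puzzle sum as $\langle e^*_\nu,\mathbf P(e_\lambda\otimes e_\mu)\rangle$ and of $S^\nu_3|_\sigma$ by \eqref{eq:defS}, and using condition (a) to see that $\mathbf P(e_\lambda\otimes e_\mu)$ lies in the face subspace spanned by $\{e_\nu\colon \nu\in L_3^n\}$ (so the sum over $\nu$ recovers the full matrix element of $\mathbf P$), the LHS of \eqref{eq:puzzle} equals
\[
  \bigl\langle e^*_{\omega_3},\ \check R^A_{\sigma,3}\,\mathbf P\,(e_\lambda\otimes e_\mu)\bigr\rangle,
\]
while directly from \eqref{eq:defS} the RHS equals
\[
  \bigl\langle e^*_{\omega_1}\otimes e^*_{\omega_2},\ (\check R^A_{\sigma,1}\otimes \check R^A_{\sigma,2})(e_\lambda\otimes e_\mu)\bigr\rangle.
\]

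The central step is to prove the global intertwining
\[
  \check R^A_{\sigma,3}\,\mathbf P \;=\; \mathbf P\,(\check R^A_{\sigma,1}\otimes \check R^A_{\sigma,2})
\]
(with spectral parameters permuted consistently by $\sigma$ on both sides). By induction on $\ell(\sigma)$, this reduces to the case $\sigma = s_i$, where the identity becomes a ``row-by-row'' sliding argument: an $\check R_{3,3}$ acting at the bottom of the triangle $\mathbf P$ is pulled up through successive rows, interacting at each step with a $U$-vertex or a $\check R_{1,2}$-rhombus according to the Yang--Baxter-type relations catalogued in \cite[Prop.~5]{artic80}, until it emerges as the pair $\check R_{1,1}\otimes \check R_{2,2}$ on the NW- and NE-boundaries. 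The factorization \eqref{eq:Rfac} is what enables the interchange at each $U$-vertex, and the face-subspace restriction is preserved throughout because each $\check R_{a,a}$ maps $V_a^A\otimes V_a^A$ into itself.

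Substituting this identity into the LHS yields $\langle e^*_{\omega_3},\,\mathbf P\,(\check R^A_{\sigma,1}\otimes \check R^A_{\sigma,2})(e_\lambda\otimes e_\mu)\rangle$. Since $\check R$-matrices preserve the total $\lie h$-weight, $\check R^A_{\sigma,1}e_\lambda$ lies in the weight-$\omega_1$ subspace of $\bigotimes_i V_1^A(q^\alpha z_i)$ (with spectral parameters permuted) and similarly $\check R^A_{\sigma,2}e_\mu$ lies in the weight-$\omega_2$ subspace. Condition (b) applies verbatim to this subspace and replaces $e^*_{\omega_3}\mathbf P$ by $e^*_{\omega_1}\otimes e^*_{\omega_2}$, producing the RHS. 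The main obstacle is the global intertwining identity: conceptually it says that $\mathbf P$, being built out of $\check R_{1,2}$ and $U$, is ``transparent'' to $\check R$-matrices on its boundaries, but the rigorous inductive argument must carefully track spectral parameters through three families of intertwiners and verify that the local moves compose coherently. Once that is established, conditions (a) and (b) complete the proof essentially formally.
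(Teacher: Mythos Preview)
Your proposal is correct and captures exactly the standard integrability argument. Note, however, that the present paper does not supply its own proof of this theorem: it is quoted verbatim as \cite[Thm.~5]{artic71} and treated as input. Your outline (rewrite both sides as matrix elements, use condition~(a) to collapse the $\nu$-sum, push $\check R^A_{\sigma,3}$ through $\mathbf P$ via the Yang--Baxter/bootstrap relations of \cite[Prop.~5]{artic80} to obtain $\mathbf P_\sigma(\check R^A_{\sigma,1}\otimes\check R^A_{\sigma,2})$, then invoke condition~(b) to replace $e^*_{\omega_3}\mathbf P_\sigma$ by $e^*_{\omega_1}\otimes e^*_{\omega_2}$) is precisely the argument one finds in the cited source, so there is nothing to compare beyond confirming agreement.

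One small clarification worth making explicit in your write-up: the intertwining identity you need is $\check R^A_{\sigma,3}\,\mathbf P = \mathbf P_\sigma\,(\check R^A_{\sigma,1}\otimes\check R^A_{\sigma,2})$ where $\mathbf P_\sigma$ is built with the permuted spectral parameters $z_{\sigma^{-1}(1)},\ldots,z_{\sigma^{-1}(n)}$; you allude to this parenthetically but it is what makes condition~(b) applicable at the end (it holds for any choice of parameters, hence in particular for the permuted ones).
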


We will need two more conditions in order to interpret the puzzle
identity above in the context of Schubert calculus:
\begin{enumerate}[(a)]\setcounter{enumi}{2}
\item (weak version) The successive differences of the weights in each
  chosen face should form a type $A$ root subsystem of $\lie{g}$. 
  \\
  (strong version) $\check R^A_{a,a}(z',z'')$, in the chosen basis, coincides
  with the type $A$ $\check R$-matrix 
\begin{equation}\label{eq:Rsingle}
\check R^A(z',z'')_{ij}^{ml}=
\frac{1}{\textstyle 1-q^2 z''/z'}
\begin{cases}
 \ \, 1-q^2 z''/z'& i=j=m=l
\\
 (1-z''/z')q & i=l\ne j=m
\\
 \ \, 1-q^2 & i=m<j=l
\\
 (1-q^2)z''/z' & i=m>j=l
\\
0 & \text{else}
\end{cases}
\junk{
=
\begin{cases}
1& i=j=m=l
\\
\frac{\textstyle q(1-z''/z')}{\textstyle 1-q^2 z''/z'} & i=l\ne j=m
\\
\frac{\textstyle 1-q^2}{\textstyle 1-q^2 z''/z'} & i=m<j=l
\\
\frac{\textstyle (1-q^2)z''/z'}{\textstyle 1-q^2 z''/z'} & i=m>j=l
\\
0 & \text{else}
\end{cases}
}
\end{equation}\\
In proposition \ref{prop:weaktostrong} we will show that under a
natural assumption on the ordering of the basis, the weak version
implies the strong version.

\item Say $\omega_a$ contains $p_{a,i}$ times the $i^{\rm th}$ label
  of $L_a$.  Let $G=GL_n(\CC)$, $B_-< G$ be the lower triangular
  matrices, $P_a\ge B_-$ the parabolic subgroups with Levi factors
  $\prod_i GL_{p_{a,i}}(\CC)$, and $\mathcal F_a = P_a \backslash G$
  the corresponding flag varieties. \\
  Then we require 
  $P_3\le P_1\cap P_2$, i.e., that $\mathcal F_3$ is a refinement of
  $\mathcal F_1$ and $\mathcal F_2$.
  \junk{is refinement the correct word? AK: yes}
\end{enumerate}
\junk{maybe number conditions, rather than bullets. AK: you've lettered them}

Write $W_a = W\cap P_a$. There is a $W$-equivariant bijection 
between strings of $n$ labels in $L_a$ with content $\omega_a$ and
cosets $W_a\backslash W$, which sends $\sigma\in W_a\backslash W$ to
$\lambda=(\omega_{a,\sigma(i)})_{i=1,\ldots,n}\in L_a^n$; we identify
strings and cosets via this bijection in what follows.

We are now in a position to state our most general
Theorem.  We consider the map $p=p_1\times p_2$ from $\mathcal F_3$
to $\mathcal F_1\times \mathcal F_2$ that to a flag
$F\in \mathcal F_3$ associates the pair of subflags obtained from $F$
by keeping only the parts that match the dimension vector of
$\mathcal F_1$, $\mathcal F_2$.

The Cartan torus $T=(\CC^\times)^n \leq G$ acts on each $\mathcal F_a$.
Consider the equivariant $K$-theory rings $K_{T}(\mathcal F_a)[q^\pm]$.\footnote{
  In \cite{artic80}, $q$ appeared as the equivariant parameter
  associated to an extra $\CC^\times$ scaling the fiber of the
  cotangent bundles of the $\mathcal F_a$; in the present paper, which
  contains little geometry, we shall not make use of this interpretation.}
They are modules over $K_{T}(pt)[q^\pm]\cong\ZZ[z_1^\pm,\ldots,z_n^\pm,q^\pm]$.
The various $R$-matrices have poles, so we need to localize:
we choose the extended base ring $\mathcal R$ by adding to $K_{T}(pt)[q^\pm]$
the inverses of $1-q^{2k}z_i/z_j$, $k\ne0$, $1\le i,j\le n$. The identity \eqref{eq:puzzle} then takes value in $\mathcal R$.
Tensoring with $\mathcal R$ will be denoted by the superscript $\loc$.
\rem[gray]{note that the $z_i$ here are the unshifted parameters (no $q^{h/3}$ or whatever) so localisation as above is fine, I think}

According to \cite[Lemma~2]{artic80}, $S^\lambda_a|_{\sigma}$ only
depends on the class of $\sigma$ in $W_a\dom W$; we define
$S^\lambda_a \in K_{T}^{\loc}(\mathcal F_a)[q^\pm]$ by the property
that its restriction to each fixed point $\sigma\in W_{a}\dom W$ is
given by $S^\lambda_a|_\sigma$. It is known (see \cite[\S 2]{artic80})
that $q^{\ell(\lambda)}S^\lambda_a$ can be identified with the
(equivariant) motivic Segre class associated to the Schubert cell $X^\lambda_\circ$
indexed by $\lambda$ inside $\mathcal F_a$. In what follows we ignore
this power of $q$ (essentially a choice of convention)
and just call $S^\lambda_a$ the motivic Segre class.

As an immediate corollary of Theorem~\ref{thm:puzzle}, we have
the following statement,
which is a more precise version of Theorem~\ref{thm:loose} that was advertised in the introduction:
\setcounter{thmd}{2}
\begin{thmd}\label{thm:motivic}
In the setup above,
conditions (a)--(d) imply that motivic Segre classes
satisfy the following puzzle identity in $K_{T}^{\loc}(\mathcal F_3)[q^\pm]$:
\begin{equation}\label{eq:main}
p^*(S^\lambda_1\otimes S^\mu_2)=
\sum_\nu \tikz[scale=1.8,baseline=0.5cm]{\uptri{\lambda}{\nu}{\mu}}
    \ S^\nu_3
  \end{equation}
\end{thmd}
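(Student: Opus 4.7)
The plan is to upgrade Theorem~\ref{thm:puzzle}, a pointwise matrix-element identity, into the claimed $K$-theoretic identity \eqref{eq:main} of classes, by appealing to injectivity of restriction to $T$-fixed points in $K_T^{\loc}(\mathcal F_3)[q^\pm]$. The argument should split into one step per side of \eqref{eq:main}, followed by a localization comparison.

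First I would compute the left-hand side of \eqref{eq:main} at an arbitrary $T$-fixed point $\sigma\in W_3\dom W$ of $\mathcal F_3$. By \cite[\S 2]{artic80}, each $S^\lambda_a\in K_T^{\loc}(\mathcal F_a)[q^\pm]$ is defined as the unique class whose restriction at the fixed point $W_a\sigma$ is the matrix element $S^\lambda_a|_\sigma$ from \eqref{eq:defS} (well-defined on $W_a$-cosets by \cite[Lemma~2]{artic80}). Condition (d), $P_3\leq P_1\cap P_2$, guarantees that $p=p_1\times p_2\colon \mathcal F_3\to \mathcal F_1\times\mathcal F_2$ is a well-defined $T$-equivariant morphism whose action on $T$-fixed points sends $\sigma$ to $(W_1\sigma, W_2\sigma)$, so
\[
\bigl(p^*(S^\lambda_1\tensor S^\mu_2)\bigr)\big|_\sigma \;=\; S^\lambda_1|_\sigma\cdot S^\mu_2|_\sigma .
\]

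I would then invoke Theorem~\ref{thm:puzzle}, whose hypotheses are exactly conditions (a) and (b): its conclusion \eqref{eq:puzzle} rewrites the product above as
\[
\sum_{\nu\in L_3^n}\,\tikz[scale=1.8,baseline=0.5cm]{\uptri{\lambda}{\nu}{\mu}}\;S^\nu_3|_\sigma ,
\]
which is precisely the restriction of the right-hand side of \eqref{eq:main} to $\sigma$; the sum is automatically finite since $\nu\in L_3^n$ has bounded length. The proof then closes by equivariant localization: the restriction-to-fixed-points map $K_T^{\loc}(\mathcal F_3)[q^\pm]\to\bigoplus_{\sigma\in W_3\dom W}\mathcal R$ is injective once the denominators defining $\mathcal R$ have been inverted, so equality at every fixed point forces equality of classes.

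The main obstacle is not computational but rather a bookkeeping check that each of conditions (a)--(d) pulls its weight: (a) and (b) feed into Theorem~\ref{thm:puzzle} (governing how face subspaces and weight spaces match on the two sides of a puzzle), (d) is what makes $p^*$ well-defined in the first place, and (c) is what identifies the abstract matrix entries of $\check R^A_{a,a}$ with the type-$A$ $\check R$-matrix \eqref{eq:Rsingle}, so that the puzzle fugacities appearing here will specialize to the combinatorial pieces of Theorems~\ref{thm:sepdesc} and \ref{thm:almostsepdesc} downstream. Given that Theorem~\ref{thm:puzzle} and the geometric identification of $S^\lambda_a$ with a motivic Segre class are already supplied by prior work, Theorem~\ref{thm:motivic} itself reduces to the one-line localization argument above; the substance of the paper then lies in realizing the setup (a)--(d) in the two cases of interest.
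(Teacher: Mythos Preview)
Your argument is correct and follows essentially the same route as the paper's: factor $p^*(S^\lambda_1\otimes S^\mu_2)$ as $p_1^*(S^\lambda_1)\cdot p_2^*(S^\mu_2)$, restrict to a fixed point $\sigma$ to get $S^\lambda_1|_\sigma\, S^\mu_2|_\sigma$, apply Theorem~\ref{thm:puzzle}, and conclude by injectivity of fixed-point localization. One small refinement to your bookkeeping: condition (c) is not merely for downstream specialization but is already needed here, since it is what lets one identify the abstractly defined $S^\lambda_a$ (via \eqref{eq:defS} and the type-$A$ $\check R$-matrix) with the genuine motivic Segre classes, so that the theorem really is a statement about those classes.
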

\begin{proof}
  By definition $p=(p_1\times p_2)\circ\Delta$, with obvious
  notations, so $p^*(S^\lambda_1\otimes S^\mu_2) = p_1^*(S^\lambda_1) p_2^*(S^\mu_2)$.
  Furthermore, $p$ is also compatible with restriction to fixed
  points, in the sense that $p\circ i_3 = (i_1\times i_2)\circ p$,
  where $i_a: W\to \mathcal F_a$ is the inclusion of fixed points.
  The r.h.s.\ of \eqref{eq:puzzle} is therefore
  $p^*(S^\lambda_1\otimes S^\mu_2)|_\sigma$.
  We conclude by using injectivity of the $i_a^*$.
\end{proof}

In what follows, we shall call such puzzles \defn{generic} puzzles
(corresponding to a generic value of $q$), to differentiate them from
ordinary puzzles which correspond to the limit $q\to0$.


Theorem~\ref{thm:motivic}, in contrast to \cite[Thm.~1]{artic80}
which it generalizes, is {\em not}\/ a product rule for motivic
Segre classes of Schubert cells; 
rather, we have the following lemma:

\begin{lem}\label{lem:pullback}
  The pullback under $p_1: \mathcal F_3\to \mathcal F_1$ of the Segre
  motivic class of a Schubert cell is the Segre motivic class of its
  preimage; equivalently,
  \[
    p_1^*(S_1^\lambda)
    = \sum_{\mu \in W_{3}\backslash W:\ p_1(\mu)=\lambda} q^{\ell(\mu)-\ell(\lambda)} S_3^\mu
  \]
\end{lem}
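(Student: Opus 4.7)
The plan is to verify the identity by restricting to each $T$-fixed point of $\mathcal F_3$ and using injectivity of the fixed-point map on $K_T^{\loc}(\mathcal F_3)$. Given $\sigma \in W_3\dom W$, the LHS restricts by functoriality to $S_1^\lambda|_{p_1(\sigma)}$, and the RHS to $\sum_{\mu:\, p_1(\mu)=\lambda} q^{\ell(\mu)-\ell(\lambda)}\, S_3^\mu|_\sigma$. Substituting \eqref{eq:defS} on both sides, the required fixed-point identity reads
\[
\langle e_{\omega_1}^*, \check R^A_{p_1(\sigma),1}\, e_\lambda \rangle \;=\; \sum_{\mu:\, p_1(\mu)=\lambda} q^{\ell(\mu)-\ell(\lambda)}\, \langle e_{\omega_3}^*, \check R^A_{\sigma,3}\, e_\mu \rangle.
\]

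The essential input is condition (d), which makes $L_1$ a quotient of $L_3$ obtained by identifying labels within each $W_1/W_3$-block, combined with the strong form of condition (c), which says both $\check R^A_{1,1}$ and $\check R^A_{3,3}$ are the type-$A$ $\check R$-matrix \eqref{eq:Rsingle} on the respective alphabets. For a single elementary transposition $\sigma=s_i$, one reads off \eqref{eq:Rsingle} that summing matrix elements over $\mu$-labels within one $L_1$-block produces the corresponding matrix element of $\check R^A_{1,1}$: the ``swap with $q$'' entries (second case of \eqref{eq:Rsingle}) contribute exactly one factor of $q$ per within-block swap, while the ``cross'' entries respect the block decomposition. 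Hence the total power of $q$ contributed by a given $\mu$ is its number of within-block inversions, which is precisely $\ell(\mu)-\ell(\lambda)$. The general $\sigma$ then follows by induction on $\ell(\sigma)$ using the braid relations satisfied by the $\check R^A_{a,a}$.

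The main obstacle is the combinatorial bookkeeping in the inductive step — ensuring that the sum over $\mu$ decomposes compatibly with a reduced word for $\sigma$, and that the $q$-weights accumulate correctly through the braid moves (the diagonal denominators $1-q^2 z''/z'$ also need to be tracked across the two sides). A slicker route bypasses R-matrix manipulations entirely by using the geometric characterization recalled in \S\ref{ssec:genthm}: $q^{\ell(\lambda)}S_1^\lambda$ is the equivariant motivic Segre class of the Schubert cell $X^\lambda_\circ \subset \mathcal F_1$, and such classes commute with smooth pullback and are additive on stratifications. Since $p_1$ is a smooth fibration and $p_1^{-1}(X^\lambda_\circ) = \bigsqcup_{\mu:\, p_1(\mu)=\lambda} X^\mu_\circ$ is a stratification into Schubert cells of $\mathcal F_3$, motivic additivity yields
\[
p_1^*\bigl(q^{\ell(\lambda)} S_1^\lambda\bigr) \;=\; \sum_{\mu:\, p_1(\mu)=\lambda} q^{\ell(\mu)} S_3^\mu,
\]
which is the claimed identity after dividing through by $q^{\ell(\lambda)}$.
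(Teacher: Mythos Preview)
Your ``slicker route'' at the end is essentially the paper's own proof: the paper invokes \cite[proposition 6.3]{MNS} for compatibility of motivic Segre classes with smooth pullback, additivity under disjoint unions for the decomposition $p_1^{-1}(X^\lambda_\circ)=\bigsqcup_\mu X^\mu_\circ$, and then accounts for the $q$-powers exactly as you do.

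Your first approach, via fixed-point restriction and direct manipulation of the type-$A$ $\check R$-matrix entries, is not what the paper does. It is plausible in outline, and the single-crossing case you sketch is correct, but you are right to flag the inductive step as the obstacle: controlling the sum over $\mu$ through a reduced word requires checking that the block-summing operation intertwines the two $\check R$-matrix products coherently (including the rational prefactors), which amounts to re-proving a branching statement for the $R$-matrix presentation of motivic classes. The paper simply doesn't go there; it takes the geometric shortcut you land on in your final paragraph.
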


\begin{proof}
  This is a combination of
  \begin{itemize}
  \item the result \cite[proposition 6.3]{MNS} about preimages along
    smooth maps, 
  \item the additivity of Segre motivic classes under disjoint unions, and
  \item careful accounting of the powers of $q$ we threw out before
    theorem \ref{thm:motivic}.  \hfill\qedhere
  \end{itemize}
  \junk{use duality of CmC / SmC. how does pushforward work exactly in
    $K$-case? for $H$ it's just computation of $\chi$ of fiber, which
    here is probably $\AA^k$ (proof?)}
\end{proof}

We give an application of Theorem~\ref{thm:motivic} and Lemma~\ref{lem:pullback} in the case of nonequivariant cohomology:
\begin{thm}\label{thm:euler}
In the same setup as Theorem~\ref{thm:motivic}, the Euler characteristic of
$Y:=g\, p_1^{-1}(X^\lambda_\circ)\cap g' p_2^{-1}(X^\mu_\circ)\cap g'' X^\nu_\circ$ (where $g,g',g''$ are general elements of $GL(n)$)
is $(-1)^{\dim Y}$ times the sum of $H$-fugacities
of nonequivariant puzzles with sides $\lambda$, $\mu$, $\overleftarrow \nu$.
\end{thm}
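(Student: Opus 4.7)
The plan is to integrate the generic puzzle identity of Theorem~\ref{thm:motivic} against the motivic Segre class of the opposite Schubert cell inside $\mathcal F_3$, and then specialize the quantum parameter $q$ (together with the nonequivariant limit $z_1 = \cdots = z_n$) so that the Hirzebruch $\chi_y$-genus of the triple intersection collapses to its signed topological Euler characteristic.

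Concretely, by Lemma~\ref{lem:pullback} the left-hand side $p^*(S^\lambda_1 \otimes S^\mu_2)$ of \eqref{eq:main} is, up to the tracked power of $q$, the motivic Segre class of $p_1^{-1}(X^\lambda_\circ) \cap p_2^{-1}(X^\mu_\circ) \subset \mathcal F_3$. I would multiply both sides of \eqref{eq:main} by $S^{\overleftarrow{\nu}}_3$, the motivic Segre class of the opposite Schubert cell in $\mathcal F_3$, and push forward to a point. Since $GL_n$ is connected, the classes of $g\,p_1^{-1}(X^\lambda_\circ)$, $g'\,p_2^{-1}(X^\mu_\circ)$ and $g''\,X^\nu_\circ$ agree with those of the untranslated sets, while the genericity of $g,g',g''$ guarantees transversality; the product of the three motivic Segre classes therefore equals the motivic Segre class of $Y$. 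The pushforward to a point is then $\chi_y(Y)$, and the duality between $\{S^\nu_3\}$ and $\{S^{\overleftarrow{\nu}}_3\}$ picks out exactly the puzzle sum on the right-hand side of \eqref{eq:main}.

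Finally, setting the $z_i$ equal and specializing $q$ to the value implementing $y=-1$ produces $(-1)^{\dim Y}\chi(Y)$ on the geometric side (Hirzebruch's formula), while on the combinatorial side the same specialization turns the generic fugacity of each puzzle into its nonequivariant $H$-fugacity. The main obstacle will be the normalization and sign bookkeeping: tracking the $q^{\ell(\lambda)}$ convention relating $S^\lambda$ to the honest motivic Segre class used in Lemma~\ref{lem:pullback}, identifying the value of $q$ that corresponds to $y=-1$ under the $\check R$-matrix normalization \eqref{eq:Rsingle}, and confirming that Schubert duality on $\mathcal F_3$ is implemented by the string reversal $\nu \mapsto \overleftarrow\nu$. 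Once these conventions are aligned, the theorem follows from Theorem~\ref{thm:motivic} via the standard pushforward-to-a-point computation.
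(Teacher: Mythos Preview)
Your proposal is correct and follows precisely the route the paper indicates: the paper itself omits the proof, saying it is identical to that of \cite[Thm.~3]{artic80} with the only new ingredient being Lemma~\ref{lem:pullback}, and that is exactly the structure you outline---pair Theorem~\ref{thm:motivic} against the opposite Schubert cell's Segre motivic class, push forward, and specialize. Your flagged bookkeeping items (the $q^{\ell}$ normalization, the value $q=-1$ defining the $H$-fugacity, and the reversal $\nu\mapsto\overleftarrow\nu$ implementing Schubert duality on $\mathcal F_3$) are the right things to track, and the paper's conventions make each of them explicit.
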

Here $\overleftarrow \nu := (\nu_n,\ldots,\nu_2,\nu_1)$,
and by definition, the $H$-fugacity of a nonequivariant puzzle is its
fugacity at the specialization $q=-1$; in the two applications that we
have in mind, these $H$-fugacities will be equal to $1$, so that
$\chi(Y)$ is $(-1)^{\codim Y}$ times the number of such puzzles.

We omit the proof of Theorem~\ref{thm:euler}, which is identical to that of \cite[Thm.~3]{artic80}, the only new ingredient being
the interpretation of the pullbacks of the $S_a^\lambda$ in Lemma~\ref{lem:pullback}.

\junk{there's a slightly annoying issue with having ``blank'' labels in
  that I often draw partially filled puzzles (where the unfilled edges
  are supposed to be summed over), but there's no way to distinguish
  unfilled from blank. AK: not sure this is the place to apologize for that.
  PZJ: correct, comment to myself only}

\junk{AK: there should be a subsection here that proves the type A
  $R$-matrix property (c) under the condition that the shortest $w$ that
  moves the type A face to a top face, takes the vertex order to the
  vertex order at the top}

\junk{BTW puzzles with multinumbers at bottom also fit in this framework}

\subsection{Flag-type faces}\label{ssec:flagfaces}
Fix a generic (real) coweight $\vec c$. With it, we can choose a
Borel subalgebra $\lie b \geq \lie h$, as containing the roots that
pair positively with $\vec c$, at which point we can call $\vec c$
\defn{dominant}. We also can put a total order on the
vertices of any weight polytope (ordered by their pairing with $\vec c$).

The weight polytope of a $\lie{g}$-irrep with high weight $\lambda$ is
combinatorially determined (i.e. its face poset, but not its edge lengths)
by the set of simple roots perpendicular to $\lambda$.
Let $\Delta_1$ denote $\lie g$'s simple roots (the vertices of
$\lie g$'s Dynkin diagram), let $P_\lambda \geq B$ be the standard
parabolic generated by $B$ and the negative roots
$-\Delta_1 \cap \lambda^\perp$, and let $supp(\lambda)$ (for
``support'') denote $\Delta_1 \setminus \lambda^\perp$, considered as
a subdiagram of $\lie g$'s Dynkin diagram.

We collect several results, surely well-known to experts, about the
combinatorics of the weight polytope $hull(W\cdot \lambda)$. This is
mostly to fix notation, and we will only use the third statement.

\begin{prop}\label{prop:faces}
  \begin{enumerate}
  \item The vertices of $hull(W\cdot \lambda)$ are in correspondence
    with $W/W_P$. 
  \item Each subdiagram $S$ of $\Delta_1$, having the property
    that every connected component of $S$ meets $supp(\lambda)$,
    induces a $\#S$-dimensional face $hull(W_S \cdot \lambda)$
    containing the basepoint $\lambda$.
  \item Each face $F$ of $hull(W\cdot \lambda)$ is of the form
    $w \cdot W_S W_P/W_P$ for a unique such $S$,
    and there is a unique shortest such $w$.
  \end{enumerate}
\end{prop}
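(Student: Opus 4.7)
The plan is to treat the three parts in order, each one reducing to standard facts about Weyl groups, weight polytopes, and Coxeter double cosets.

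For part (1), I would identify $\mathrm{Stab}_W(\lambda)$ with $W_P$. A simple reflection $s_\alpha$ fixes $\lambda$ iff $\langle\lambda,\alpha^\vee\rangle = 0$, which for $\alpha\in\Delta_1$ amounts to $\alpha\in\Delta_1\setminus supp(\lambda)$; the classical Chevalley theorem on stabilizers of weights says these reflections generate the full stabilizer, i.e., $W_P$. Hence $W\cdot\lambda \leftrightarrow W/W_P$ as $W$-sets, and these cosets index the vertices of $hull(W\cdot\lambda)$.

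For part (2), given $S$ satisfying the hypothesis, I would pick a coweight $\vec c$ in the closed dominant chamber with $\langle\vec c,\alpha\rangle = 0$ exactly for $\alpha\in S$, and consider the face $F_{\vec c}$ of $hull(W\cdot\lambda)$ on which $\vec c$ is maximized. Since both $\lambda$ and $\vec c$ are dominant, $\lambda\in F_{\vec c}$, and a short argument using that $\lambda - w\lambda$ is always a nonnegative integer combination of simple roots identifies the vertices of $F_{\vec c}$ with $W_S\cdot\lambda$. For the dimension, I would decompose $S$ into its connected components $C_i$: each $W_{C_i}$ acts irreducibly on $\mathrm{span}(C_i)$, and the hypothesis $C_i\cap supp(\lambda)\ne\varnothing$ forces $W_{C_i}\cdot\lambda$ to be nontrivial, hence by irreducibility to affinely span $\lambda + \mathrm{span}(C_i)$. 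Summing, $\dim F_{\vec c} = \#S$.

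For part (3), I would express an arbitrary face $F$ as the maximum locus of a linear functional $\langle\vec c,\cdot\rangle$; after choosing $w\in W$ with $w^{-1}\vec c$ dominant, $F = w\cdot F_{w^{-1}\vec c}$, and part (2) gives $F = w\cdot hull(W_S\cdot\lambda)$ for some $S$ (any component of $S$ disjoint from $supp(\lambda)$ acts trivially and can be dropped without changing the face, so $S$ may be taken to satisfy the hypothesis). The vertex set of $F$ is then $wW_SW_P/W_P \subseteq W/W_P$, a single left $W_S$-orbit. Uniqueness of $S$ I would obtain intrinsically: the linear span of $(w^{-1}F) - \lambda$ inside $\mathfrak{h}^*$ equals $\mathrm{span}(S)$, and by linear independence of $\Delta_1$, one recovers $S = \Delta_1\cap\mathrm{span}(S)$ from $F$. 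For the unique shortest $w$: the set of $w$ giving the same face is a $(W_S,W_P)$-double coset $W_S w W_P \subseteq W$, and by standard Coxeter theory every such double coset has a unique element of minimal length.

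I expect the main obstacle to be the uniqueness of $S$ in part (3) — distinct subdiagrams could a priori produce the same face — but the intrinsic affine-span description resolves it without case analysis, and the rest reduces to classical facts about root systems and parabolic double cosets.
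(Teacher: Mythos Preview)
Parts (1) and (2) are fine; the gap is in (3). Your claim that $\{w : w\cdot F_S = F\}$ is the double coset $W_SwW_P$ is false --- that set is the left coset $w\cdot\mathrm{Stab}_W(F_S)$, which is in general neither $W_SwW_P$ nor $wW_SW_P$. Concretely, take $W=S_3$, $\lambda$ regular (so $W_P=1$), $S=\{s_1\}$, and $F=s_1s_2\cdot F_S$: the valid $w$'s are $\{s_1s_2,\,s_1s_2s_1\}$ with minimum $s_1s_2$, whereas $W_S(s_1s_2)W_P=\{s_2,\,s_1s_2\}$ has minimum $s_2$, and $s_2F_S\ne F$. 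What salvages the conclusion is that $\mathrm{Stab}_W(F_S)$ is itself a \emph{standard} parabolic $W_{S''}\supseteq W_S$. One way to see this: the barycenter $b$ of $F_S$ is the orthogonal projection of $\lambda$ onto $\mathrm{span}(S)^\perp$, and since the projection of $\lambda$ onto $\mathrm{span}(S)$ is a nonnegative combination of the simple roots in $S$ (being a dominant weight for the root subsystem generated by $S$), pairing with $\alpha_i^\vee$ for $\alpha_i\notin S$ shows $\langle b,\alpha_i^\vee\rangle\ge 0$; thus $b$ is dominant and $\mathrm{Stab}_W(F_S)=\mathrm{Stab}_W(b)$ is standard parabolic. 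Left cosets of a standard parabolic have unique shortest elements.

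Your uniqueness-of-$S$ argument has a related circularity: you recover $S$ from $w^{-1}F$, but $w^{-1}F$ a priori depends on which valid $w$ you chose, so you have not yet shown $S$ is determined by $F$ alone. The barycenter closes this gap too: if $F=wF_S=w'F_{S'}$ then $F_S$ and $F_{S'}$ are $W$-conjugate faces through $\lambda$, so their dominant barycenters are $W$-conjugate and hence equal; since the barycenter of a face lies in its relative interior, $F_S=F_{S'}$, and then your span argument gives $S=S'$. (The paper states this proposition as well-known and omits the proof, so there is no paper argument to compare approaches against.)
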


Say a face of a weight polytope is \defn{of flag type} if it is, on its own,
a standard simplex. In this case $S$ is automatically a type $A$ subdiagram,
and, the $w$ from proposition \ref{prop:faces}(3) is the unique one
that corresponds the vertices of $F$ with those of $W_S W_P/W_P$ in an
order-preserving manner. (The terminology arises from the Nakajima quiver
variety perspective. To each weight $\mu$ in the polytope, the $\mu$
weight space arises as $K(\calM)$ for $\calM$ a certain quiver variety
\cite{Nakaj-quiv3}, and $\mu$ lies on a face of flag type iff its $\calM$
is the cotangent bundle of a partial flag variety.)

\junk{

\begin{proof}
  \begin{enumerate}
  \item The $W$-stabilizer of a point $\lambda$ in the reflection
    representation is generated by the simple reflections $r_\alpha$
    for $\alpha \perp \lambda$, hence, is $W_P$.
  \item When $\lambda$ is regular (i.e. $W_P=1$ and $supp(\lambda)=\Delta_1$),
    the corner of $hull(W\cdot\lambda)$ nearby the basepoint is an
    orthant with edges in correspondence with $\Delta_1$. Hence the
    faces are in correspondence with subdiagrams of $\Delta_1$.

    When $\lambda$ is not regular, we can study its polytope as a
    degenerate limit of the regular case. 
  \end{enumerate}
\end{proof}
}

The following result should have a purely representation-theoretic proof,
and in particular seems certain to hold for non-simply-laced groups.
However, for the two examples in this paper it suffices to consider
the simply-laced case (types $A,D$ specifically). Within this case,
one can compute using quiver varieties.

\begin{prop}\label{prop:weaktostrong}
  Assume that $\lie{g}$ is simply laced, and that condition (c) holds
  in the weak version. Assume that the ordering on the basis of $V_a(z)$
  is induced by the pairing of its weights with a dominant coweight.
  Then condition (c) holds in the strong version (equation (\ref{eq:Rsingle})).
\end{prop}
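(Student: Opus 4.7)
The strategy is to recognize $\check R^A_{a,a}(z',z'')$ as the $\check R$-matrix of the defining representation of a type-$A$ quantum loop algebra, at which point its matrix elements are forced to agree with \eqref{eq:Rsingle} by uniqueness and normalization.

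First, I would use the weak form of (c) together with the one-dimensionality of weight spaces to argue that the weights of $V_a^A(z)$ form a linear chain $\mu_1,\ldots,\mu_m$ in which each successive difference $\mu_{i+1}-\mu_i$ is a simple root of $\lie g$, and that these simple roots span a type-$A_{m-1}$ subdiagram $S\subseteq \Delta_1$. Since $\lie g$ is simply-laced, the corresponding Levi subalgebra $\lie g_S$ is literally $\lie{sl}_m$, and the standard classification of minuscule-type weight chains identifies $V_a^A(z)$ with the defining evaluation representation of $\Uq(\lie g_S[z^\pm])\cong \Uq(\lie{sl}_m[z^\pm])$. The ordering of the basis of $V_a^A(z)$ induced by pairing with the dominant coweight $\vec c$ then matches the standard basis ordering of the defining representation: each increment $\mu_{i+1}-\mu_i$ is a positive simple root of $\lie g$ and hence pairs positively with $\vec c$, so the chain is traversed monotonically from lowest to highest.

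Second, for generic spectral parameters $z',z''$ the tensor product $V_a^A(z')\tensor V_a^A(z'')$ is irreducible as a $\Uq(\lie{sl}_m[z^\pm])$-module \cite{Chari-braid}, so by Schur's lemma any intertwiner to $V_a^A(z'')\tensor V_a^A(z')$ is determined up to a scalar. The matrix in \eqref{eq:Rsingle} is one such intertwiner, and with the normalization that fixes the tensor of the two highest-weight vectors it is the unique such one. Since $\check R^A_{a,a}(z',z'')$ is also an intertwiner carrying the same normalization (by the convention fixed just before the theorem), the two must coincide on $V_a^A\tensor V_a^A$.

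The principal obstacle is justifying rigorously, in the first step, that the restriction of the $\gqg$-action to the subalgebra picked out by $S$ really gives the defining evaluation module with the correct Chevalley normalizations and the correct spectral-parameter dependence, without extra diagonal twists or rescalings of $z$. This is exactly where the simply-laced hypothesis is essential, since it rules out the long-versus-short root asymmetries that would otherwise modify those normalizations; once this identification is in place, the argument reduces to the uniqueness of the defining-representation $\check R$-matrix of $\Uq(\lie{sl}_m[z^\pm])$.
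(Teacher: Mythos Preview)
Your approach is representation-theoretic, while the paper's is geometric via Nakajima quiver varieties, and the paper explicitly remarks that a direct representation-theoretic argument ``should'' exist but then proceeds otherwise. So the comparison is worth making, but your proposal has a real gap.

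The first step contains an error: the weak form of (c) only says the successive differences of face weights form a type~$A$ root subsystem of $\lie g$, not that they are \emph{simple} roots. In the type-$D$ case of \S\ref{sec:almostsepdesc} the face $V_3^A$ has among its differences the root $x_k+x_{k-1}$, which is not simple for the chosen Borel. Hence $V_a^A$ is not in general a module for a Levi subalgebra of $\gqg$ picked out by a subset $S\subseteq\Delta_1$, and your identification of $V_a^A(z)$ with the defining evaluation module of $\Uq(\lie{sl}_m[z^\pm])$ via restriction to a Levi does not go through directly. The ``principal obstacle'' you flag---getting the correct $\Uq(\lie{sl}_m[z^\pm])$-module structure on $V_a^A(z)$ so that the restricted $\check R$ is an intertwiner for it---is not a technicality but the entire content of the proposition, and your Schur argument cannot start without it.

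The paper instead invokes Nakajima's realization of weight spaces as $K$-groups of quiver varieties $\calM_I(w,v,\theta)$ and Okounkov's geometric construction of $\check R$-matrices from these. The point is then purely about quiver varieties: the Weyl element $u$ from Proposition~\ref{prop:faces}(3) moves the face $F$ to a ``top'' face supported on a genuine type-$A$ subdiagram $S$; Nakajima's Weyl-group action gives an equivariant isomorphism $\calM_I(w,v,\theta)\cong \calM_I(w,v',u^{-1}\!\cdot\theta)$ with $v'$ supported on $S$; deleting the unused vertices yields a type-$A$ quiver variety; and the minimality of $u$ in its $W_S$-coset guarantees $u^{-1}\!\cdot\theta$ is still positive on $S$, so the stability condition is preserved. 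The equality of $\check R$-matrices then follows from the equivariant isomorphism of the underlying geometry, bypassing any direct appeal to a quantum subalgebra.
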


\begin{proof}
  We recapitulate the proof from \cite[propositions 11,12]{artic80}, which for
  the most part just quotes literature about Nakajima quiver varieties.
  The principal things one needs to know about such a variety
  (for this proof) is that it depends on a graph 
  (or ``simply laced Dynkin diagram'') with vertex set $I$, on two integer 
  vectors $w,v: I \to \NN$, and on a real vector $\theta: I \to \RR$;
  also, on this variety $\calM_I(w,v,\theta)$ there is an action of
  the ``flavor group'' $\prod_I GL(w^i)$.
  Write $\theta > \vec 0$ if every $\theta^i > 0$.
  \begin{enumerate}
  \item When $\lie g$ is simply laced, and its representations $V,W$ are
    tensor products of evaluation representations of $\gqg$, 
    then by \cite{Nakaj-quiv3} we can compute the weight spaces of $V,W$ as
    the $K$-groups of type $\lie g$ quiver varieties with $\theta > \vec 0$.
    More specifically, the weight given to $K(\calM_I(v,w,\theta))$ is
    $$ \sum_{i\in I} w^i \vec\omega_i - \sum_{i\in I} v^i \vec\alpha_i $$
    where $(\alpha_i,\omega_i)_{i\in I}$ denote the simple roots and
    fundamental weights of $\lie g$. 
  \item By \cite{Ok-K},
    we can compute the $\check R$-matrix (in quite a subtle way) from
    the equivariant geometry of those quiver varieties.
  \item   It turns out to be obvious from the full definition (which we will
  not recapitulate) that if $v^i=0$ for some vertex $i\in I$, deleting
  that vertex gives an isomorphic quiver variety.
  \end{enumerate}
  
  As such, it suffices to show that the type $\lie g$ quiver varieties
  computing the weight spaces in the face subspaces $V_a(z)$ are
  isomorphic to the expected type $A$ quiver varieties.

  \begin{enumerate}
    \setcounter{enumi}{3} 
  \item  \cite{Naka-Weyl} Let $\pi$ be in the Weyl group of $\lie g$,
    and define $v'$ by
    $$ \pi \cdot \left(\sum_i w^i \vec\omega_i - \sum_i v^i \vec\alpha_i \right)
    \ = \ \sum_i w^i \vec\omega_i - \sum_i v'^i \vec\alpha_i $$
    Meanwhile, define $\pi\cdot\theta$ by identifying $\theta$ with
    $\sum_i \theta^i \omega_i$ (a regular dominant real weight, when
    $\theta > \vec 0$). Then the varieties
    $\calM_I(w,v',\pi\cdot\theta)$, $\calM_I(w,v,\theta)$
    are $\prod_{i\in I} GL(w^i)$-equivariantly isomorphic.
    \junk{To calculate $v'$ from $v$ when $\pi = r_{\vec\alpha_i}$, one replaces
      the $\vec\alpha_i$ label $v^i$ by the sum of all adjacent labels {\em
        including} labels on framed vertices, minus the original label $v^i$.}
  \item \cite[Lemma 3.2.3(i)]{Ginz-naka}
    If $\theta_1,\theta_2$ are two all-positive choices of real vector,
    then $\calM_I(w,v,\theta_1),\calM_I(w,v,\theta_2)$ are again
    equivariantly isomorphic.
  \end{enumerate}

  Since the weak version of (c) is assumed to hold,
  \begin{enumerate}
    \setcounter{enumi}{5} 
  \item the face $F$ of the weight polytope is a simplex,
  \item $F$ has vertices $u \cdot W_S W_P/W_P$ where $S$ is a type $A$
    subdiagram, and
  \item the shortest-length such $u$ corresponds the vertices of
    $W_S W_P/W_P$ with those of $F$ in an order-preserving manner.
  \item (One can also infer that $w^i = 0$ on each $i\in S$ except
    for one end of $S$.)
  \end{enumerate}

  One of the equivalent conditions for $u$ to be shortest in its
  $W_S\backslash W_G$ coset is that $u^{-1} \cdot \alpha_s$ is a positive root
  for each $s\in S$. 
  
  At this point our goal is to show that for $\lambda$ a weight on $F$,
  the corresponding quiver variety $\calM_I(w,v,\theta > \vec 0)$ is isomorphic
  to a type $A$ quiver variety with $\theta > \vec 0$.
  By (7), we know that $u^{-1} \cdot \lambda$ lies on a ``top'' face
  with vertices $W_S W_P/W_P$, for $S$ a type $A$ subdiagram.
  By (4), we know
  $$ \calM_I(w,v,\theta) \iso \calM_I(w,v',u^{-1}\cdot \theta) $$
  where $v'$ is supported on the subdiagram $S$. By (3), we can safely
  delete the remaining vertices. So finally, we want to check that
  $u^{-1}\cdot \theta$ is positive on $S$, i.e. that
  $$ 0 < \langle u^{-1}\cdot\theta, \alpha_s \rangle
  = \langle \theta, u\cdot\alpha_s \rangle $$
  but this inequality follows from the fact that $\theta$ is dominant and
  $u r_s > u\ \forall s\in S$. 
\end{proof}

\subsection{Schubert classes}
\label{ssec:q0}
Geometrically, Schubert and Grothendieck polynomials arise as polynomial representatives of \defn{Schubert classes},
and we'll use the same notation $\Sc$ and $\Gr$ for $H^*$- and $K$-classes of Schubert varieties, respectively.
All our classes are equivariant unless otherwise stated, i.e., related to the ``double'' versions of the polynomials.
It is well-known that Schubert classes have the same structure constants $c^{\pi\rho}_\sigma$ as the corresponding polynomials.
Either by explicit computation (as was done in \cite[\S 3.5]{artic80}) or from general principles, one can show that
motivic Segre classes and $K$-theoretic Schubert classes are closely related; namely,
\begin{equation}\label{eq:StoSc}
\Gr^\lambda = \left(\lim_{q\to 0} q^{-\ell(\lambda)} S^\lambda\right)^\vee
\end{equation}
where $\vee$ is the duality map that takes classes of vector bundles to classes of their duals. 

\rem[gray]{full explanation:  (powers of $q$ ignored, nonequivariant)

(1) $(S^\lambda)^\vee = S_{\lambda^*}$ (lemma 3 of II), so in particular $\lim_{q\to 0} S^\lambda = (\lim_{q\to\infty} S_{\lambda^*})^\vee$

(2) $\left< S^\lambda, S_\mu\right>=\delta^\lambda_{\mu}$ for some appropriate $\left<,\right>$ that tends to the usual pushforward to a point
as $q\to\infty$, so
$\left< \lim_{q\to\infty} S^\lambda, \lim_{q\to\infty} S_\mu\right>=\delta^\lambda_{\mu}$.

(3) For a closed subvariety, $\lim_{q\to\infty} SMC(X)=[X]$. Now $S^\lambda=SMC(X^\lambda_o)$ and
$X^\lambda=\bigsqcup_{\mu\le\lambda} X^\mu_o$, so $S^\lambda = A^{-1}_{\lambda,\mu} SMC(X^\mu)$ with $A$ Bruhat order matrix,
so $\lim_{q\to\infty} S^\lambda = A^{-1}_{\lambda,\mu} \Sc^\mu$.

(4) Since $\left<\Sc^\lambda,\Sc^\mu\right>=A^{\lambda,\mu^*}$, this also means
$\lim_{q\to\infty} S^\lambda = \Sc_{\lambda^*}$.

(5) (2) and (4) $\Rightarrow$ $\lim_{q\to\infty} S_{\lambda} = \Sc^{\lambda^*}$.

(6) (1) and (5) $\Rightarrow$ $\lim_{q\to0} S^\lambda = (\Sc^\lambda)^\vee$.

A perhaps easier way to remember the last point is to say that $\lim_{q\to0} St^\lambda$ (the motivic Chern class) is the
canonical class of the Schubert variety.
}
\rem[gray]{Actually, with equivariance, it's a bit more complicated: ($\rho$=reversing order of equiv params)

(1) $(S^\lambda)^\vee = \rho S_{\lambda^*}$ (lemma 3 of II), so in particular $\lim_{q\to 0} S^\lambda = (\lim_{q\to\infty} \rho S_{\lambda^*})^\vee$

(2) $\left< S^\lambda, S_\mu\right>=\delta^\lambda_{\mu}$ for some appropriate $\left<,\right>$, so
$\left< \lim_{q\to\infty} S^\lambda, \lim_{q\to\infty} S_\mu\right>=\delta^\lambda_{\mu}$.

(3) For a closed subvariety, $\lim_{q\to\infty} SMC(X)=[X]$. Now $S^\lambda=SMC(X^\lambda_o)$ and
$X^\lambda=\bigsqcup_{\mu\le\lambda} X^\mu_o$, so $S^\lambda = A^{-1}_{\lambda,\mu} SMC(X^\mu)$,
so $\lim_{q\to\infty} S^\lambda = A^{-1}_{\lambda,\mu} \Sc^\mu$.

(4) Since $\left<\Sc^\lambda,\rho\Sc^\mu\right>=A^{\lambda,\mu^*}$, this also means
$\lim_{q\to\infty} S^\lambda = \rho\Sc_{\lambda^*}$.

(5) (2) and (4) $\Rightarrow$ $\lim_{q\to\infty} S_{\lambda} = \rho\Sc^{\lambda^*}$.

(6) (1) and (5) $\Rightarrow$ $\lim_{q\to0} S^\lambda = (\Sc^\lambda)^\vee$.
}

Taking the limit $q\to0$ is done in two steps.

Firstly, one needs to {\em twist}\/ the $R$-matrices in order to absorb into them the powers of $q$ that appear
in \eqref{eq:StoSc}. The twist typically takes the form $\Omega=q^{\frac{1}{2}B}$
of the exponential of
some skew-symmetric form $B$ in the weights. At the level of puzzles,
$B$ has the simple meaning that it computes the inversion charge
of puzzle pieces: namely, define the inversion charge of a triangle to be one half
of the form $B$ applied to the weights of two successive sides in counterclockwise
order (by weight conservation, this definition does not depend on which two sides).
Similarly, define the inversion charge of a rhombus to be the sum of $B$ applied
to the top two edges and $B$ applied to the bottom two edges, counterclockwise.
Then the effect of the twist is simply to multiply the fugacity of each puzzle
piece by $q^{\inv}$.

It is important to extend slightly what we mean by ``weight'' at this stage. Each space $V_a(z)$ can be decomposed into weight spaces
by diagonalizing the action of the Cartan subalgebra $\lie{h}$.
All our $R$-matrices commute with the $\lie{h}$-action, and therefore preserve weight spaces.
It is convenient to enlarge this weight space as follows:
we assign an additional weight $y_a$ to any vector in $V_a(z)$, $a=1,2$, and extend by usual additivity to tensor products,
the existence of $U$ and $D$ implying that vectors in $V_3(z)$ must have weight $y_1+y_2$.

The skew-symmetric form $B$ then acts on this extended $(\dim\lie{h}+2)$-dimensional weight space.

Specifically,
let $\tilde S^\lambda_a$ be defined just like $S^\lambda_a$ by \eqref{eq:defS}, but with the $R$-matrix $\check R^A_{a,a}$ replaced with its 
twisted version $\Omega \check R^A_{a,a} \Omega^{-1}$. The
required property for $B$ is:
\begin{lem}[{\cite[\S3.5]{artic80}}]\label{lem:twist}
If for any two vectors $e_{a,i}$ and $e_{a,j}$ of the ordered basis of $V^A_a$, $a=1,2,3$ one has
\[
B(wt(e_{a,i}),wt(e_{a,j}))=\text{sign}(i-j)=\begin{cases}-1&i<j\\0&i=j\\1&i>j\end{cases}
\]
then $\tilde S^\lambda_a$ satisfies
\[
\tilde S^\lambda_a = q^{-\ell(\lambda)} S^\lambda_a
\]
\end{lem}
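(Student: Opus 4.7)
The key is that $\Omega = q^{B/2}$ is diagonal in the weight basis of $\bigotimes_i V^A_a(z_i)$: on the basis tensor $e_\mu := \bigotimes_i e_{a,\mu_i}$ it acts as the scalar
\[
\Omega(\mu) \;:=\; q^{\frac{1}{2}\sum_{i<j} B(wt(e_{a,\mu_i}),\, wt(e_{a,\mu_j}))}.
\]
Because $\Omega$ depends only on weights, not on the evaluation parameters $z_i$, it commutes with the natural identifications between tensor products with permuted $z$-labellings. Consequently the intermediate factors $\Omega^{-1}\Omega = 1$ inserted between consecutive elementary $\check R$-matrices in a reduced expression for $\check R^A_{\sigma,a}$ really do cancel, and the twisted version of $\check R^A_{\sigma,a}$ coincides with the single global conjugation $\Omega\,\check R^A_{\sigma,a}\,\Omega^{-1}$. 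Verifying this telescoping is the one point I would check with care; it rests on the fact that every elementary $\check R$ preserves the total weight of the two sites it acts on, so that when one expands $\Omega\check R\Omega^{-1}$ on a basis vector, all off-site contributions cancel and only a factor attached to the exchanged pair survives.

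Granted this, taking matrix elements against $e_\omega^*$ and $e_\lambda$, with $\omega = sort(\lambda)$, yields
\[
\tilde S^\lambda_a|_\sigma \;=\; \frac{\Omega(\omega)}{\Omega(\lambda)}\; S^\lambda_a|_\sigma,
\]
so the lemma reduces to the combinatorial identity $\Omega(\omega)/\Omega(\lambda) = q^{-\ell(\lambda)}$.

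To establish that identity, I would substitute the hypothesis $B(wt(e_{a,i}),\, wt(e_{a,j})) = \text{sign}(i-j)$ and compute
\[
\sum_{i<j} B(\mu_i,\mu_j) \;=\; \#\{i<j:\mu_i > \mu_j\} - \#\{i<j:\mu_i<\mu_j\} \;=\; 2\ell(\mu) + \#\{i<j:\mu_i=\mu_j\} - \binom{n}{2}.
\]
The last two terms depend only on the multiset of $\mu$. Since $\lambda$ and $\omega = sort(\lambda)$ share a multiset and $\ell(\omega)=0$, their difference is $-2\ell(\lambda)$, and halving gives $\Omega(\omega)/\Omega(\lambda) = q^{-\ell(\lambda)}$. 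Equivalently, in the paper's piecewise language, the local inversion charges of the rhombi in any history telescope along the puzzle to a boundary-only quantity, and the chosen $B$ is tuned so that this quantity equals $-\ell(\lambda)$ regardless of the history — which is why every puzzle in $S^\lambda_a|_\sigma$ picks up the same overall factor $q^{-\ell(\lambda)}$ under the twist.
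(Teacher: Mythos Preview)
The paper does not actually prove this lemma; it is stated with the citation \cite[\S3.5]{artic80} and then simply used. So there is no ``paper's own proof'' to compare against here.

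That said, your argument is correct and is the natural one. The telescoping you describe is exactly the Drinfeld-twist mechanism: because the two-site operator $\Omega_{i,i+1}=q^{\frac12 B(wt_i,wt_{i+1})}$ depends only on weights and $B$ is bilinear, the global $n$-site $\Omega_n=q^{\frac12\sum_{i<j}B(wt_i,wt_j)}$ factors as $\prod_{i<j}\Omega_{ij}$; weight conservation of each elementary $\check R$ then lets all off-pair factors commute through, leaving precisely the local conjugation $\Omega_{i,i+1}\check R_{i,i+1}\Omega_{i,i+1}^{-1}$. Your combinatorial step,
\[
\tfrac12\sum_{i<j}\operatorname{sign}(\mu_i-\mu_j)=\ell(\mu)-\tfrac12\binom{n}{2}+\tfrac12\#\{i<j:\mu_i=\mu_j\},
\]
with the last two terms depending only on the multiset, cleanly yields $\Omega(\omega)/\Omega(\lambda)=q^{-\ell(\lambda)}$. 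One small quibble: your displayed formula for $\sum_{i<j}B(\mu_i,\mu_j)$ has a sign/arithmetic slip in the intermediate expression (it should read $2\ell(\mu)-\binom{n}{2}+\#\{\text{ties}\}$, which is what you then use), but the conclusion is right.
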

If Lemma~\ref{lem:twist} applies, then
$\tilde S^\lambda$ has a limit as $q\to0$ which is related to $\Sc^\lambda$
by the simple duality of \eqref{eq:StoSc}.

By applying the twist to puzzles (i.e., to $\check R_{1,2}$, $U$, $D$),
one has a puzzle formula for the $\tilde S^\lambda$, and therefore,
at this stage, a would-be puzzle formula for $K$-theoretic Schubert classes by taking $q\to0$, except for the fact that
nothing guarantees that the fugacities of individual puzzles, and of puzzle pieces, remain finite in this limit.

Therefore, secondly, one needs to renormalize the weight vectors
(i.e., conjugate $\check R_{1,2}$, $U$ and $D$ by diagonal matrices
with powers of $q$ down the diagonal) in order to render the fugacities of all puzzle pieces finite as $q\to0$.

Note that even in the first step, there is some freedom in the choice
of $B$ -- it is not entirely determined by the requirement that it
absorb the powers of $q$ in \eqref{eq:StoSc}. Ultimately, both steps
form a linear programming problem, so a computer can determine whether
this procedure
\begin{itemize}
\item succeeds (it will for separate descents, see
\S\ref{ssec:sepdescq0}-\ref{ssec:sepdescq0T}),
or 
\item fails entirely, as happens e.g. for 4-step
flag varieties, or
\item  fails for $\check R_{1,2}$ but works for $U$ and
$D$, so that one still has a nonequivariant rule, as was the case for
3-step flag varieties in \cite{artic71}, and likewise for almost
separated descents, as we shall see in \S\ref{ssec:almostsepdescq0}.
\end{itemize}

The same procedure can in principle be repeated for Schubert polynomials (i.e., Schubert classes in $H^*$ or $H^*_T$).
However, it is more convenient to take the limit from $K$-theory to cohomology which corresponds to substituting $y_i\mapsto 1-y_i$
and expanding at first nontrivial order in the fugacities. If all triangles have nonnegative inversion charge (as is the case in
Theorems~\ref{thm:sepdesc} and \ref{thm:almostsepdesc}), a further simplification occurs in this limit: by a simple inversion count, one sees
that triangles with nonzero inversion charge cannot occur in cohomological puzzles.

\rem[gray]{AK: are we clearly at a stage where all
  inversion charges are nonnegative? nope. this is a tricky question\dots Pretty sure $q\to0$ limit exists $\Leftrightarrow$
there are no triangles with negative inversion charge, but don't know of a simple proof}

\section{Separated descents}\label{sec:sepdesc}
\subsection{The data from \S\ref{ssec:tensor}}
\label{ssec:sepdescdata}
The algebra is $\Aqg$,
\junk{PZJ: you really mean $gl$, not $sl$? AK: I put $gl$ instead of $sl$
  more by reflex than anything else -- do you think it matters?
  I'd rather work in $(x_i)$ co\"ordinates than $(\alpha_i)$ co\"ordinates.
  PZJ: OK}
the representations $V_{1,2,3}$ are $\CC^{d+2}(z)$, $\CC^{d+2}(z)$,
$Alt^2 \CC^{d+2}(z)$ respectively, and the exponent $\alpha$ is $1$. The scaling
of the intertwiners will be fixed (after proposition \ref{prop:sepdescId})
by \S\ref{ssec:genthm}'s condition (c).
\junk{PZJ: where? BTW, the single color R-matrices already needed
  normalizing for the main theorem. AK: yeah, where should we do this.
  Presumably the type $A$ normalization stuff should
  be happening at the beginning of \S\ref{ssec:flagfaces}.
  PZJ: it's already in 2.3}

Put the usual co\"ordinates on the usual (diagonal) Cartan
$\lie h \leq \lie {gl}_{d+2}$, for ease of computation (though we will
renumber below).
The weights of $V_{1,2}$ are then $\{x_i\}_{i\in [1,d+2]}$,
and of $V_3$ are $\{x_i+x_j\}_{i,j\in [1,d+2], i\neq j}$.
Choose the positive Weyl chamber using $\vec c = (d+2,d+1,\ldots,1)$. 
The $V^A_i$ faces (as required in \S\ref{ssec:genthm}),
with their vertices ordered according to $\cdot \vec c$,
are determined by maximizing dot product with the following coweights:
\begin{itemize}
\item $\eta_1 = (0^{k+1},1,1^{d-k})$,
  so $V_1^A$ has weights $\{x_i\colon i \in [k+2,d+2] \}$
\item $\eta_2 = (1^{k+1},1, 0^{d-k})$,
  so $V_2^A$ has weights $\{x_i\colon i \in [1,k+2] \}$
\item $\eta_3 = (1^{k+1},2, 1^{d-k})$,
  so $V_3^A$ has weights $\{x_i + x_{k+2}\colon i \in [1,d+2],\ i\neq {k+2} \}$
\end{itemize}

With these explicit descriptions, it is easy to check condition (c) in
the weak form, which then implies the strong form by
proposition \ref{prop:weaktostrong}.

In our estimation, the labeling system giving the nicest puzzles
(in a visual, not mathematically precise, sense) includes a blank label.
Renumber the $d+2$ Cartan co\"ordinates $0<\cdots<k<\_<k+1<\cdots<d$,
at which point the weights become
$$
\begin{array}{ccc}
V_1,V_2  &\qquad & V_3  \\
\left\{ x_\_\, \} \sqcup \{x_i\colon i\in [0,d]\right\} 
&&
\left\{ x_i + x_j \colon i,j \in \{\_\} \sqcup [0,d],\ i<j \right\}
\end{array}
$$
which for $V_1,V_2$ match the internal labels on the diagonal edges and for
$V_3$ match those on the horizontal edges of the separated-descent puzzles, and
$$
\begin{array}{ccccc}
  V_1^A  & \qquad & V_2^A  &\qquad & V_3^A  \\
  \{x_\_\, \} \sqcup \{x_i\colon i\in [k+1,d]\} &&
  \{x_i\colon i\in [0,k]\} \sqcup \{x_\_\, \} &&
  \{x_i + x_{\_} \colon i \in [0,d]\ (\text{i.e. }i\neq \_)\}
\end{array}
$$
which are exactly the subsets of labels that we see on the NW, NE, and S sides.
(The correspondence between weights and labels will be somewhat trickier
in almost-separated-descent puzzles.)

Consider now a triple $\omega_1 + \omega_2 = \omega_3$ of weights, where
$\omega_i$ is the sum of $n$ weights from $V_i^A$.
Then the coefficient of $x_\_$ in $\omega_3$ is $n$, 
and if we write $\omega_3 = \sum_{i=0}^k c_i x_i + \sum_{i=k+1}^d c_i x_i + nx_\_$,
necessarily $\omega_1 = \sum_{i=k+1}^d c_i x_i + (n - \sum_{i=k+1}^d c_i) x_\_$
and $\omega_2 = \sum_{i=0}^k c_i x_i + (n - \sum_{i=0}^k c_i) x_\_$.
To verify property (d) we compute the three standard parabolics, each of
which is a group of block-upper-triangular matrices.
\begin{eqnarray*}
  \text{$P_1$'s blocks } 
&=& \left( \sum\nolimits_{i=0}^k c_i, c_{k+1}, \ldots, c_d\right)\\
  \text{$P_2$'s blocks } 
&=& \left( c_0, \ldots, c_k,  \sum\nolimits_{i=k+1}^d c_i\right)\\
  \text{$P_3$'s blocks } 
  &=& \left( c_0, \ldots, c_k, c_{k+1}, \ldots, c_d \right)
      \qquad \text{so }P_3 = P_1\cap P_2
\end{eqnarray*}

\junk{
  
\rem{AK: This subsection is the previous attempt}

\subsection{The relevant fusion of representations}
\label{ssec:sepdescfusion}

The relevant algebra is $\Uq(\lie{gl}_{n+1}[z^\pm])$.
To indicate the relevant representations and weight spaces,
we label the Nakajima-doubled Dynkin diagram $A_{n+1}$ with natural numbers,
$\vec v$ in the framed vertices and $\vec w$ in the gauge vertices.
(We don't bother to draw the framed vertices labeled by $0$.)
The highest weight of the representation is $\sum_\alpha v_\alpha \varpi_\alpha$
and the relevant weight is $\sum_\alpha v_\alpha \varpi_\alpha - \sum_\alpha w_\alpha \alpha$.

First we tensor two weight spaces together.
\junk{
(In the ``$\cdots$'' stretches
the labels decrease by $1$, i.e. $n-1,n-2,\ldots,k+1,k$ in the first.)
}

\newcommand\dSep[5]{
\node[framed] at (1,1) (v0) {#1}; 
\node[gauged] at (1,0) (v1) {#2}; 
\node at (2,0) (v2) {\cdots}; 
\node[gauged] at (3,0) (v3) {#3}; 
\node[gauged] at (4,0) (v4) {#4}; 
\node at (5,0) (v5) {\cdots}; 
\node[gauged] at (6,0) (v6) {#5}; 
\draw (v0) -- (v1);
\draw (v1) -- (v2);
\draw (v2) -- (v3);
\draw (v3) -- (v4);
\draw (v4) -- (v5);
\draw (v5) -- (v6);
}
\junk{
$$
\begin{array}{l}
  \tikz[script math mode,baseline=0]{\dSep n {n-1} k 0 0}    \\ \\
  \qquad \tensor\ \tikz[script math mode,baseline=0]{\dSep n n n k 1}  
\end{array}
 \longrightarrow \qquad
\tikz[script math mode,baseline=0]{\dSep {2n} {2n-1} {n+k} k 1}
$$
}
$$
\begin{array}{l}
  \tikz[script math mode,baseline=0]{\dSep n {n_d} {n_k} 0 0}    \\ \\
  \qquad \tensor\ \tikz[script math mode,baseline=0]{\dSep n n n {n_k} {n_1}}  
\end{array}
 \longrightarrow \qquad
\tikz[script math mode,baseline=0]{\dSep {2n} {n+n_d} {n+n_k} {n_k} {n_1}}
$$
\newcommand\Alt{\mathrm{Alt}}
Then we perform a fusion derived from
$\Alt^1 \CC^{n+1} \tensor \Alt^1 \CC^{n+1} \to \Alt^2 \CC^{n+1}$:
\junk{
$$
\tikz[script math mode,baseline=0]{\dSep {2n} {2n-1} {n+k} k 1}
\quad\longrightarrow\quad
\tikz[script math mode,baseline=0]
{
  \node[framed] at (2,1) (v0) {n}; 
  \node[gauged] at (1,0) (v1) {n-1}; 
  \node[gauged] at (2,0) (v2) {2n-2}; 
  \node[gauged] at (3,0) (v3) {2n-3}; 
  \node at (4,0) (v3b) {\cdots}; 
  \node[gauged] at (5,0) (v3bb) {n+k+1}; 
  \node[gauged] at (6,0) (v3c) {n+k}; 
  \node[gauged] at (7,0) (v4) {k}; 
  \node at (8,0) (v5) {\cdots}; 
  \node[gauged] at (9,0) (v6) {1}; 
  \draw (v0) -- (v2);
  \draw (v1) -- (v2);
  \draw (v2) -- (v3);
  \draw (v3) -- (v3b);
  \draw (v3b) -- (v3bb);
  \draw (v3bb) -- (v3c);
  \draw (v3c) -- (v4);
  \draw (v4) -- (v5);
  \draw (v5) -- (v6);
}
$$ }
$$
\tikz[script math mode,baseline=0]{\dSep {2n} {n+n_d} {n+n_k} {n_k} {n_1}}
\quad\longrightarrow\quad
\tikz[script math mode,baseline=0]
{
  \node[framed] at (3,1) (v0) {n}; 
  \node[gauged] at (2,0) (v1) {n_d}; 
  \node[gauged] at (3,0) (v2) {n+n_{d-1}}; 
  \node         at (4,0) (v3b) {\cdots}; 
  \node[gauged] at (5,0) (v3c) {n+n_k}; 
  \node[gauged] at (6,0) (v4) {n_k}; 
  \node         at (7,0) (v5) {\cdots}; 
  \node[gauged] at (8,0) (v6) {n_1}; 
  \draw (v0) -- (v2);
  \draw (v1) -- (v2);
  \draw (v2) -- (v3b);
  \draw (v3b) -- (v3c);
  \draw (v3c) -- (v4);
  \draw (v4) -- (v5);
  \draw (v5) -- (v6);
}
$$
To understand this weight space, we apply {\em reflection operators} (as in
\cite[\S 3]{artic80}) that replace a gauged label $w$ by the sum of all adjacent
labels (including framed), minus $w$. Do this first at the $n+n_k$:
$$
\tikz[script math mode,baseline=0]
{
  \node[framed] at (3,1) (v0) {n}; 
  \node[gauged] at (2,0) (v1) {n_d}; 
  \node[gauged] at (3,0) (v2) {n+n_{d-1}}; 
  \node         at (4,0) (v3b) {\cdots}; 
  \node[gauged] at (5,0) (v3c) {n+n_k}; 
  \node[gauged] at (6,0) (v4) {n_k}; 
  \node         at (7,0) (v5) {\cdots}; 
  \node[gauged] at (8,0) (v6) {n_1}; 
  \draw (v0) -- (v2);
  \draw (v1) -- (v2);
  \draw (v2) -- (v3b);
  \draw (v3b) -- (v3c);
  \draw (v3c) -- (v4);
  \draw (v4) -- (v5);
  \draw (v5) -- (v6);
}
\iso
\tikz[script math mode,baseline=0]
{
  \node[framed] at (3,1) (v0) {n}; 
  \node[gauged] at (2,0) (v1) {n_d}; 
  \node[gauged] at (3,0) (v2) {n+n_{d-1}}; 
  \node         at (4,0) (v3b) {\cdots}; 
  \node[gauged] at (5,0) (v3c) {n_{k+1}}; 
  \node[gauged] at (6,0) (v4) {n_k}; 
  \node         at (7,0) (v5) {\cdots}; 
  \node[gauged] at (8,0) (v6) {n_1}; 
  \draw (v0) -- (v2);
  \draw (v1) -- (v2);
  \draw (v2) -- (v3b);
  \draw (v3b) -- (v3c);
  \draw (v3c) -- (v4);
  \draw (v4) -- (v5);
  \draw (v5) -- (v6);
}
$$
\junk{
$$
\tikz[script math mode,baseline=0]
{
  \node[framed] at (2,1) (v0) {n}; 
  \node[gauged] at (1.2,0) (v1) {n-1}; 
  \node[gauged] at (2,0) (v2) {2n-2}; 
  \node[gauged] at (3,0) (v3) {2n-3}; 
  \node at (4,0) (v3b) {\cdots}; 
  \node[gauged] at (5,0) (v3c) {n+k+1}; 
  \node[gauged] at (6,0) (v3d) {n+k}; 
  \node[gauged] at (7,0) (v4) {k}; 
  \node at (8,0) (v5) {\cdots}; 
  \node[gauged] at (8.7,0) (v6) {1}; 
  \draw (v0) -- (v2);
  \draw (v1) -- (v2);
  \draw (v2) -- (v3);
  \draw (v3) -- (v3b);
  \draw (v3b) -- (v3c);
  \draw (v3c) -- (v3d);
  \draw (v3d) -- (v4);
  \draw (v4) -- (v5);
  \draw (v5) -- (v6);
}
\iso
\tikz[script math mode,baseline=0]
{
  \node[framed] at (2,1) (v0) {n}; 
  \node[gauged] at (1.2,0) (v1) {n-1}; 
  \node[gauged] at (2,0) (v2) {2n-2}; 
  \node[gauged] at (3,0) (v3) {2n-3}; 
  \node at (4,0) (v3b) {\cdots}; 
  \node[gauged] at (5,0) (v3c) {n+k+1}; 
  \node[gauged] at (6,0) (v3d) {k+1}; 
  \node[gauged] at (7,0) (v4) {k}; 
  \node at (8,0) (v5) {\cdots}; 
  \node[gauged] at (8.7,0) (v6) {1}; 
  \draw (v0) -- (v2);
  \draw (v1) -- (v2);
  \draw (v2) -- (v3);
  \draw (v3) -- (v3b);
  \draw (v3b) -- (v3c);
  \draw (v3c) -- (v3d);
  \draw (v3d) -- (v4);
  \draw (v4) -- (v5);
  \draw (v5) -- (v6);
}
$$
}
This has the effect of incrementing $k$. Repeat until $k=n-1$, then reflect
at the trivalent vertex, and finally at the leftmost vertex. (In all, walk through
the vertices leftward, using each once.)
$$
\iso\
\tikz[script math mode,baseline=0]
{
  \node[framed] at (1.6,1) (v0) {n}; 
  \node[gauged] at (.6,0) (v1) {n_d}; 
  \node[gauged] at (1.6,0) (v2) {n+n_{d-1}}; 
  \node[gauged] at (3,0) (v3) {n_{d-1}}; 
  \node at (4,0) (v3b) {\cdots}; 
  \node[gauged] at (5,0) (v6) {n_1}; 
  \draw (v0) -- (v2);
  \draw (v1) -- (v2);
  \draw (v2) -- (v3);
  \draw (v3) -- (v3b);
  \draw (v3b) -- (v6);
}
\ \iso\
\tikz[script math mode,baseline=0]
{
  \node[framed] at (2,1) (v0) {n}; 
  \node[gauged] at (1.2,0) (v1) {n_d}; 
  \node[gauged] at (2,0) (v2) {n_d}; 
  \node[gauged] at (3,0) (v3) {n_{d-1}}; 
  \node at (4,0) (v3b) {\cdots}; 
  \node[gauged] at (5,0) (v6) {n_1}; 
  \draw (v0) -- (v2);
  \draw (v1) -- (v2);
  \draw (v2) -- (v3);
  \draw (v3) -- (v3b);
  \draw (v3b) -- (v6);
}
\ \iso\
\tikz[script math mode,baseline=0]
{
  \node[framed] at (2,1) (v0) {n}; 
  \node[gauged] at (1.2,0) (v1) {0}; 
  \node[gauged] at (2,0) (v2) {n_d}; 
  \node[gauged] at (3,0) (v3) {n_{d-1}}; 
  \node at (4,0) (v3b) {\cdots}; 
  \node[gauged] at (5,0) (v6) {n_1}; 
  \draw (v0) -- (v2);
  \draw (v1) -- (v2);
  \draw (v2) -- (v3);
  \draw (v3) -- (v3b);
  \draw (v3b) -- (v6);
}
$$
\junk{$$
\iso\
\tikz[script math mode,baseline=0]
{
  \node[framed] at (2,1) (v0) {n}; 
  \node[gauged] at (1.2,0) (v1) {n-1}; 
  \node[gauged] at (2,0) (v2) {2n-2}; 
  \node[gauged] at (3,0) (v3) {n-2}; 
  \node at (4,0) (v3b) {\cdots}; 
  \node[gauged] at (5,0) (v6) {1}; 
  \draw (v0) -- (v2);
  \draw (v1) -- (v2);
  \draw (v2) -- (v3);
  \draw (v3) -- (v3b);
  \draw (v3b) -- (v6);
}
\ \iso\
\tikz[script math mode,baseline=0]
{
  \node[framed] at (2,1) (v0) {n}; 
  \node[gauged] at (1.2,0) (v1) {n-1}; 
  \node[gauged] at (2,0) (v2) {n-1}; 
  \node[gauged] at (3,0) (v3) {n-2}; 
  \node at (4,0) (v3b) {\cdots}; 
  \node[gauged] at (5,0) (v6) {1}; 
  \draw (v0) -- (v2);
  \draw (v1) -- (v2);
  \draw (v2) -- (v3);
  \draw (v3) -- (v3b);
  \draw (v3b) -- (v6);
}
\ \iso\
\tikz[script math mode,baseline=0]
{
  \node[framed] at (2,1) (v0) {n}; 
  \node[gauged] at (1.2,0) (v1) {0}; 
  \node[gauged] at (2,0) (v2) {n-1}; 
  \node[gauged] at (3,0) (v3) {n-2}; 
  \node at (4,0) (v3b) {\cdots}; 
  \node[gauged] at (5,0) (v6) {1}; 
  \draw (v0) -- (v2);
  \draw (v1) -- (v2);
  \draw (v2) -- (v3);
  \draw (v3) -- (v3b);
  \draw (v3b) -- (v6);
}
$$
}
\subsubsection{Relation to the AJS/Billey formula}
\rem{it's not literally AJS/Billey since we're working at the Segre motivic level...}

As in \cite[\S 3.1]{artic80} we need to show that our puzzle $R$-matrix
contains the $R$-matrix used to calculate
the type $A$ AJS/Billey formula. While the statement is simple, and should
have a simple representation-theoretic proof, as in \cite[\S 3.1]{artic80}
we argue that the quiver varieties involved in the two calculations
are exactly the same.

\rem{AK needs to reread \cite[\S 3.1]{artic80} to figure out what more
  to say than this.  Hopefully it just amounts to quoting results from
  there.  Also it should have to do with reading off the content from
  the weight}
  
\junk{analogue of II \S 3.1 (3.2?) -- we need to prove ``geometrically''
  that single-number R-matrices are the correct ones, just as in II.
  the rest can be done ``combinatorially''.}

More specifically, for each of the two representations we start with
(the $n$th tensor power of $V_{\omega_1}$), and the third representation we
end with (the $n$th tensor power of $V_{\omega_2}$),
one face of the weight polytope should itself be a weight polytope for
a standard type $A$ representation, with weights
$$ \mu,\ \mu - \beta_1,\ \mu - (\beta_1 + \beta_2), 
\ \ldots, \ \mu - (\beta_1+\ldots+\beta_m) $$
where the sequence $(\beta_i)$ of roots is conjugate by some Weyl group element
to a type $A$ subsystem.
While this formulation is largely a linear repackaging of the calculations
already in \S\ref{ssec:sepdescfusion}, it will be good practice for
the somewhat more complicated \S \ref{ssec:almostsepdescfusion}.

The first list of $\beta$s is just the simple roots 
$
\tikz[script math mode,baseline=0]
{
  \node[gauged] at (5,0) (v3bb) {0^i}; 
  \node[gauged] at (6,0) (v3c) {1}; 
  \node[gauged] at (7,0) (v4) {0^{m-i}}; 
  \draw (v3bb) -- (v3c);
  \draw (v3c) -- (v4);
}
$
for $i=0,\ldots,d-k$, no reflections necessary.
The second list is 
$
\{ \beta^k_j := 
\tikz[script math mode,baseline=0]
{
  \node[gauged] at (5,0) (v3bb) {1^{d+1-k}}; 
  \node[gauged] at (6,0) (v3bc) {0^j}; 
  \node[gauged] at (7,0) (v3c) {1}; 
  \node[gauged] at (8,0) (v4) {0^{k-j}}; 
  \draw (v3bb) -- (v3bc);
  \draw (v3bc) -- (v3c);
  \draw (v3c) -- (v4);
}
\colon j=0,\ldots,k\}$. Reflecting at vertices $n+1-k,\ldots,n+1$ (in that order)
takes $\beta^k_j$ to $\beta^{k+1}_j$, as if the $1$s had moved to the left.
Repeat this $n-k$ more times, until we get the roots
$(\beta^{n+1}_j) = (\alpha_1,\ldots,\alpha_{k+1})$.

In the third group we face a new phenomenon, in that one of the roots in which we
reflect is the one attached to the framed vertex. The $\beta$s come in three types:
$$
  \tikz[script math mode,baseline=0]
{
  \node[gauged] at (3,0) (v3) {0}; 
  \node[gauged] at (4,0) (v4) {1^{d-k}}; 
  \node[gauged] at (5,0) (v5) {0^{k}};
  \draw (v3) -- (v4);
  \draw (v4) -- (v5);
}
+
\begin{cases}
  \alpha_j & 2\leq j\leq 1+d-k \\
  \alpha_{1+d-k} + \alpha_{2+d-k} \\
  \alpha_j & j> 2+d-k
\end{cases}
$$
Reflecting these at vertices $d+1-k,d-k,\ldots,1$, we get the simple roots
$\alpha_{j-1}, \alpha_{2+d-k}, \alpha_j$ respectively.

} 

\subsection{Proof of property (b)} \label{prop:sepdescId}
Given a string $\lambda$ in the symbols $\{0,\ldots,d\}$ define
\junk{
  $\lambda_>$ (resp.\ $\lambda_\ge$, $\lambda_\le$) to be the string
  obtained from $\lambda$ by replacing every digit $\le k$ (resp.\ $<k$,
  $> k$) with blanks.
}
$$
\begin{array}{rcccl}
  &  \lambda_> 
  & := \text{ the string obtained from $\lambda$ by replacing every digit }
  & >k &\text{ with blanks.} \\
  \text{resp. } & \lambda_\geq && \geq k \\
  & \lambda_\leq && \leq k
\end{array}
$$

\begin{prop}\label{prop:trivpuzzlea}
  There is a unique puzzle
  \tikz[scale=1.8,baseline=0.5cm]{\uptri{\lambda}{\omega}{\mu}} with a
  weakly increasing string $\omega$ at the bottom, and $\lambda$ (resp.\ 
  $\mu$) with content $\omega_>$ (resp.\ $\omega_\le$);
  it has $\lambda=\omega_>$ and $\mu=\omega_\le$, and
  the labels on diagonal edges are constant along each diagonal
  (NW/SE or NE/SW).
\end{prop}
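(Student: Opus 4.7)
The plan is to prove existence, uniqueness, and the constant-along-diagonals statement together, via a direction-forcing argument rooted in the boundary constraints. The starting observation is that $\Al_1$ contains no small letter ($\le k$) and $\Al_2$ no large letter ($>k$), so any small letter appearing inside the puzzle must eventually exit through NE, and any large letter through NW. Inspecting the six allowed piece types, a small letter $i$ can only propagate up-and-to-the-right (through $\uptri{}{i}{i}$, $\downtri{}{i}{i}$, or as the smaller label $i$ in $\uptri{j}{ij}{i}$ or $\downtri{j}{ij}{i}$), and a large letter $j$ only up-and-to-the-left; moreover, at a compound piece the smaller and larger labels emerge on the NE and NW sides respectively, so neither sub-strand can turn around. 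Each letter's strand is therefore a straight diagonal through the puzzle, oriented NE/SW if the letter is small and NW/SE if large.

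For existence, I would lay down the puzzle by drawing each $\omega_c$ as a straight strand at the correct orientation. Since $\omega$ is weakly increasing, small letters occupy the left half of the bottom and large letters the right half; parallel same-type strands never collide, while each small-large pair crosses at exactly one interior horizontal edge, where a compound piece fits with the required $i<j$ condition automatic from $i\le k<j$. Labeling each slanting edge by the letter of whichever (at most one) strand crosses its midpoint yields a valid puzzle with bottom $\omega$, NW boundary exactly $\omega_>$, NE boundary exactly $\omega_\le$, and each strand labeled constantly along its slanting edges.

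For uniqueness, the direction-forcing observation of the first paragraph suffices. In any valid puzzle, the bottom-left up-triangle is determined by the NW-boundary constraint together with the horizontal label $\omega_1$: the only options are $\uptri{}{\omega_1}{\omega_1}$ when $\omega_1\le k$ and $\uptri{\omega_1}{\omega_1}{}$ when $\omega_1>k$. Direction-forcing then determines the adjacent down-triangle, and propagating rightward across the bottom row pins down every piece, with a single compound piece occurring exactly at the small/large transition in $\omega$. The same argument applies row by row upward, so the entire puzzle must coincide with the straight-strand configuration constructed above. The most delicate point is confirming rigorously that a strand really must be straight---equivalently, that a small sub-strand always emerges from a compound piece on the NE side---which reduces to a case-by-case check of the compound pieces and rules out any hypothetical ``zig-zagging'' configuration that could re-route a small letter toward NW.
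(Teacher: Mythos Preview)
Your strategy—small letters must exit through NE, large through NW, hence every strand runs as a straight diagonal—has the right shape, but the key step fails. The claim that ``a small letter $i$ can only propagate up-and-to-the-right'' by piece inspection is false: the piece $\uptri{i}{i}{}$ (and its $180^\circ$ rotation) is valid for \emph{every} $i\in\{0,\ldots,d\}$, small or large. You can see this from the $q\to 0$ fugacities ($\tilde U'{}^{i,\_}=1$ regardless of $i$), or concretely in \S5.1, where the dictionary from Grassmannian puzzles sends $\uptri{10}{0}{1}$ to the separated-descent piece $\uptri{0}{0}{}$ with $0\le k$. So on an internal ``/'' edge a small letter may sit on the NW side and the strand may turn left; local piece inspection does not forbid zig-zagging. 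The situation is even looser in the \emph{generic} setting where Proposition~\ref{prop:trivpuzzlea} actually lives (before $q\to 0$): there the piece $\uptri{i}{ij}{j}$ with $i<j$ has fugacity $-q\ne 0$ and explicitly sends the smaller label NW.

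What rules zig-zagging out is the boundary, used inductively. The paper's proof peels the leftmost SW--NE diagonal: every NW-facing edge of that diagonal lies on the NW boundary, whose content $\omega_>$ contains no letter $\le k$, so when $\omega_1\le k$ the $\omega_1$-strand is forced NE at every step and the whole diagonal is determined (one $\uptri{}{\omega_1}{\omega_1}$ followed by crossing rhombi $\rh{\omega_1}{j}{\omega_1}{j}$). Peel it off and induct on the size-$(n-1)$ remainder; once no small letters remain on the bottom, the mirror argument on the rightmost NW--SE diagonal finishes. Your row-by-row propagation does not pin things down in the same way: once you move one step in from the boundary, the next up-triangle's NW edge is internal, and both $\uptri{}{\omega_2}{\omega_2}$ and $\uptri{\omega_2}{\omega_2}{}$ are locally consistent with what came before.
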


\begin{proof}
Induction on the size of the puzzle. Consider the leftmost bottom label $i$. Let's first treat the
``generic'' case where $i\le k$. Then we know that the path starting at that bottom edge
must exit the puzzle on the NE side, and it can only do NW and NE steps; therefore it must exit
at the leftmost edge of the NE side, e.g., if $i=0$,
\begin{center}
\begin{tikzpicture}[scale=0.5,every node/.style={scale=0.5},execute at begin picture={\bgroup\tikzset{every path/.style={}}\clip (-2.48,-.432012) rectangle ++(4.96,4.46413);\egroup},x={(1.55426771761071cm,0cm)},y={(0cm,-1.55426394417233cm)},baseline=(current  bounding  box.center),every path/.style={draw=black,fill=none},line join=round]
\begin{scope}[every path/.append style={fill=white,line width=.0310853166178533cm}]
\path (-.5,.866025403784439) -- (0,0) -- (.5,.866025403784439) -- cycle;
\path (-.5,.866025403784439) -- (.5,.866025403784439) -- (0,1.73205080756888) -- cycle;
\path[draw=red,line width=.108798608162487cm] (.25,.433012701892219) -- (-.04,.866025403784439);
\path[draw=red,line width=.108798608162487cm] (-.25,1.29903810567666) -- (-.04,.866025403784439);
\node[black,,scale=1.66461870488604] at (.25,.433012701892219) {0};
\path (0,1.73205080756888) -- (.5,.866025403784439) -- (1,1.73205080756888) -- cycle;
\path (0,1.73205080756888) -- (1,1.73205080756888) -- (.5,2.59807621135332) -- cycle;
\path (.5,2.59807621135332) -- (1,1.73205080756888) -- (1.5,2.59807621135332) -- cycle;
\path (.5,2.59807621135332) -- (1.5,2.59807621135332) -- (1,3.46410161513775) -- cycle;
\path (1,3.46410161513775) -- (1.5,2.59807621135332) -- (2,3.46410161513775) -- cycle;
\node[black,,scale=1.66461870488604] at (1.5,3.46410161513775) {2};
\path (-1,1.73205080756888) -- (-.5,.866025403784439) -- (0,1.73205080756888) -- cycle;
\path (-1,1.73205080756888) -- (0,1.73205080756888) -- (-.5,2.59807621135332) -- cycle;
\path[draw=red,line width=.108798608162487cm] (-.25,1.29903810567666) -- (-.54,1.73205080756888);
\path[draw=red,line width=.108798608162487cm] (-.75,2.1650635094611) -- (-.54,1.73205080756888);
\path (-.5,2.59807621135332) -- (0,1.73205080756888) -- (.5,2.59807621135332) -- cycle;
\path (-.5,2.59807621135332) -- (.5,2.59807621135332) -- (0,3.46410161513775) -- cycle;
\path (0,3.46410161513775) -- (.5,2.59807621135332) -- (1,3.46410161513775) -- cycle;
\node[black,,scale=1.66461870488604] at (.5,3.46410161513775) {1};
\path (-1.5,2.59807621135332) -- (-1,1.73205080756888) -- (-.5,2.59807621135332) -- cycle;
\path (-1.5,2.59807621135332) -- (-.5,2.59807621135332) -- (-1,3.46410161513775) -- cycle;
\path[draw=red,line width=.108798608162487cm] (-.75,2.1650635094611) -- (-1,2.59807621135332);
\path[draw=red,line width=.108798608162487cm] (-1.25,3.03108891324554) -- (-1,2.59807621135332);
\path (-1,3.46410161513775) -- (-.5,2.59807621135332) -- (0,3.46410161513775) -- cycle;
\node[black,,scale=1.66461870488604] at (-.5,3.46410161513775) {0};
\path (-2,3.46410161513775) -- (-1.5,2.59807621135332) -- (-1,3.46410161513775) -- cycle;
\path[draw=red,line width=.108798608162487cm] (-1.25,3.03108891324554) -- (-1.5,3.46410161513775);
\node[black,,scale=1.66461870488604] at (-1.5,3.46410161513775) {0};
\end{scope}
\end{tikzpicture}
\end{center}

In other words, that first SW/NE diagonal must consist of a \uptri{}{0}{0}
at the SW end followed as one goes Northeast by \rh{0}{j}{0}{j} (where $j$ is some other label, possibly blank, that's not drawn on the picture).
Then we apply the induction hypothesis to the puzzle with the
completed diagonal removed, noting that the content of the new NW side is
that of the old one minus a blank.

If we iterate this process, we'll eventually reach the case where
$i>k$, i.e., the NE side is entirely made of blanks. We then repeat
the same argument but using the rightmost bottom label $i$ (which, by
monotonicity of $\omega$, is also $>k$): it must go to the NW side and
its only endpoint is the rightmost edge of the NW side. We finally
obtain
\begin{center}
\begin{tikzpicture}[scale=0.5,every node/.style={scale=0.5},execute at begin picture={\bgroup\tikzset{every path/.style={}}\clip (-2.48,-.432012) rectangle ++(4.96,4.46413);\egroup},x={(1.55426771761071cm,0cm)},y={(0cm,-1.55426394417233cm)},baseline=(current  bounding  box.center),every path/.style={draw=black,fill=none},line join=round]
\begin{scope}[every path/.append style={fill=white,line width=.0310853166178533cm}]
\path (-.5,.866025403784439) -- (0,0) -- (.5,.866025403784439) -- cycle;
\path (-.5,.866025403784439) -- (.5,.866025403784439) -- (0,1.73205080756888) -- cycle;
\path[draw=red,line width=.108798608162487cm] (.25,.433012701892219) -- (-.04,.866025403784439);
\path[draw=red,line width=.108798608162487cm] (-.25,1.29903810567666) -- (-.04,.866025403784439);
\path[draw=blue,line width=.108798608162487cm] (.04,.866025403784439) -- (-.25,.433012701892219);
\path[draw=blue,line width=.108798608162487cm] (.04,.866025403784439) -- (.25,1.29903810567666);
\node[black,,scale=1.66461870488604] at (.25,.433012701892219) {0};
\node[black,,scale=1.66461870488604] at (-.25,.433012701892219) {2};
\node[black,,scale=1.38718062209258] at (0,.866025403784439) {02};
\path (0,1.73205080756888) -- (.5,.866025403784439) -- (1,1.73205080756888) -- cycle;
\path (0,1.73205080756888) -- (1,1.73205080756888) -- (.5,2.59807621135332) -- cycle;
\path[draw=red,line width=.108798608162487cm] (.75,1.29903810567666) -- (.46,1.73205080756888);
\path[draw=red,line width=.108798608162487cm] (.25,2.1650635094611) -- (.46,1.73205080756888);
\path[draw=blue,line width=.108798608162487cm] (.54,1.73205080756888) -- (.25,1.29903810567666);
\path[draw=blue,line width=.108798608162487cm] (.54,1.73205080756888) -- (.75,2.1650635094611);
\node[black,,scale=1.66461870488604] at (.75,1.29903810567666) {0};
\node[black,,scale=1.66461870488604] at (.25,1.29903810567666) {2};
\node[black,,scale=1.38718062209258] at (.5,1.73205080756888) {02};
\path (.5,2.59807621135332) -- (1,1.73205080756888) -- (1.5,2.59807621135332) -- cycle;
\path (.5,2.59807621135332) -- (1.5,2.59807621135332) -- (1,3.46410161513775) -- cycle;
\path[draw=blue,line width=.108798608162487cm] (1,2.59807621135332) -- (.75,2.1650635094611);
\path[draw=blue,line width=.108798608162487cm] (1,2.59807621135332) -- (1.25,3.03108891324554);
\node[black,,scale=1.66461870488604] at (.75,2.1650635094611) {2};
\node[black,,scale=1.66461870488604] at (1,2.59807621135332) {2};
\path (1,3.46410161513775) -- (1.5,2.59807621135332) -- (2,3.46410161513775) -- cycle;
\path[draw=blue,line width=.108798608162487cm] (1.5,3.46410161513775) -- (1.25,3.03108891324554);
\node[black,,scale=1.66461870488604] at (1.25,3.03108891324554) {2};
\node[black,,scale=1.66461870488604] at (1.5,3.46410161513775) {2};
\path (-1,1.73205080756888) -- (-.5,.866025403784439) -- (0,1.73205080756888) -- cycle;
\path (-1,1.73205080756888) -- (0,1.73205080756888) -- (-.5,2.59807621135332) -- cycle;
\path[draw=red,line width=.108798608162487cm] (-.25,1.29903810567666) -- (-.54,1.73205080756888);
\path[draw=red,line width=.108798608162487cm] (-.75,2.1650635094611) -- (-.54,1.73205080756888);
\path[draw=green,line width=.108798608162487cm] (-.46,1.73205080756888) -- (-.75,1.29903810567666);
\path[draw=green,line width=.108798608162487cm] (-.46,1.73205080756888) -- (-.25,2.1650635094611);
\node[black,,scale=1.66461870488604] at (-.25,1.29903810567666) {0};
\node[black,,scale=1.66461870488604] at (-.75,1.29903810567666) {1};
\node[black,,scale=1.38718062209258] at (-.5,1.73205080756888) {01};
\path (-.5,2.59807621135332) -- (0,1.73205080756888) -- (.5,2.59807621135332) -- cycle;
\path (-.5,2.59807621135332) -- (.5,2.59807621135332) -- (0,3.46410161513775) -- cycle;
\path[draw=red,line width=.108798608162487cm] (.25,2.1650635094611) -- (-.04,2.59807621135332);
\path[draw=red,line width=.108798608162487cm] (-.25,3.03108891324554) -- (-.04,2.59807621135332);
\path[draw=green,line width=.108798608162487cm] (.04,2.59807621135332) -- (-.25,2.1650635094611);
\path[draw=green,line width=.108798608162487cm] (.04,2.59807621135332) -- (.25,3.03108891324554);
\node[black,,scale=1.66461870488604] at (.25,2.1650635094611) {0};
\node[black,,scale=1.66461870488604] at (-.25,2.1650635094611) {1};
\node[black,,scale=1.38718062209258] at (0,2.59807621135332) {01};
\path (0,3.46410161513775) -- (.5,2.59807621135332) -- (1,3.46410161513775) -- cycle;
\path[draw=green,line width=.108798608162487cm] (.5,3.46410161513775) -- (.25,3.03108891324554);
\node[black,,scale=1.66461870488604] at (.25,3.03108891324554) {1};
\node[black,,scale=1.66461870488604] at (.5,3.46410161513775) {1};
\path (-1.5,2.59807621135332) -- (-1,1.73205080756888) -- (-.5,2.59807621135332) -- cycle;
\path (-1.5,2.59807621135332) -- (-.5,2.59807621135332) -- (-1,3.46410161513775) -- cycle;
\path[draw=red,line width=.108798608162487cm] (-.75,2.1650635094611) -- (-1,2.59807621135332);
\path[draw=red,line width=.108798608162487cm] (-1.25,3.03108891324554) -- (-1,2.59807621135332);
\node[black,,scale=1.66461870488604] at (-.75,2.1650635094611) {0};
\node[black,,scale=1.66461870488604] at (-1,2.59807621135332) {0};
\path (-1,3.46410161513775) -- (-.5,2.59807621135332) -- (0,3.46410161513775) -- cycle;
\path[draw=red,line width=.108798608162487cm] (-.25,3.03108891324554) -- (-.5,3.46410161513775);
\node[black,,scale=1.66461870488604] at (-.25,3.03108891324554) {0};
\node[black,,scale=1.66461870488604] at (-.5,3.46410161513775) {0};
\path (-2,3.46410161513775) -- (-1.5,2.59807621135332) -- (-1,3.46410161513775) -- cycle;
\path[draw=red,line width=.108798608162487cm] (-1.25,3.03108891324554) -- (-1.5,3.46410161513775);
\node[black,,scale=1.66461870488604] at (-1.25,3.03108891324554) {0};
\node[black,,scale=1.66461870488604] at (-1.5,3.46410161513775) {0};
\end{scope}
\end{tikzpicture}
\end{center}
\end{proof}

Note that nothing we have discussed so far depended on the normalizations of $\check R_{1,2}(z)$, $U(z)$
(which are the building blocks for puzzles).
We will need to fix them now in order to satisfy property (b):
indeed Proposition~\ref{prop:trivpuzzlea} says that
\[
e^*_{\omega} \mathbf P|_{[\Tensor_{i=1}^n V^A_1(q^\alpha z_i)]_{\omega_>}
\tensor [\Tensor_{i=1}^n V^A_2(q^{-\alpha} z_i)]_{\omega_\le}} = C\,e_{\omega_>}^*\otimes e_{\omega_\le}^*
\]
where $C$ is the fugacity of the unique puzzle of the Proposition.

In the present context of separated descents, all our matrices are of
type $A$, which means $\check R_{1,2}(z)$ coincides with
\eqref{eq:Rsingle} up to normalization. We fix the latter by
specifying that $\check R_{1,2}(z)^{ml}_{ij}=1$ for $i=l\ne j=m$,
i.e., with the convenient parametrization $z=q^{-2}z'/z''$,
\[
\check R_{1,2}(z)^{ml}_{ij}=
\rh{i}{j}{l}{m}
=
\begin{cases}
\frac{q(1-z)}{1-q^2z}& i=j=m=l
\\
1 & i=l\ne j=m
\\
-q \frac{(1-q^2)z}{1-q^2z} & i=m<j=l
\\
-q^{-1}\frac{1-q^2}{1-q^2z} & i=m>j=l
\\
0 & \text{else}
\end{cases}
\]

Similarly, starting from factorization property \eqref{eq:Rfac}, we find that $U(z)$ is given by
\[
U(z)^{ij}_a =\uptri{i}{a}{j}=
\begin{cases}
0&a\ne ij\\
1& i>j\\
-q&i<j
\end{cases}
\]
up to normalization, which we fix according to the above formula.
\junk{this assumes we've defined all labels and their ordering,
  including the ones on S side}

Then $C=1$, and property (b) is satisfied with $\omega_1=\omega_>$,
$\omega_2=\omega_\le$, $\omega_3=\omega$. The corresponding weights are of the form that is discussed at the end of \S\ref{ssec:sepdescdata}.

We also include $D(z)$ for reference, since it appears in nonequivariant puzzles:
\[
  D(z)^a_{ij} = \downtri{j}{a}{i} =
  \begin{cases}
     0     & a\ne ij\\
     1     & i<j\\
    -q^{-1} & i>j
  \end{cases}
\]

At this stage, we've got the setup of \S\ref{sec:setup} working for separated descents; this means that
Theorem~\ref{thm:motivic} applies here, providing a puzzle formula for the product of the pullbacks of two motivic Segre classes of
Schubert cells of partial flag varieties $\mathcal F_1$ and $\mathcal F_2$ (where the dimensions of $\mathcal F_1$ are less of equal
to those of $\mathcal F_2$) to their common refinement $\mathcal F_3$,
where the fugacities of the puzzle pieces are given by the entries of $\check R$, $U$, $D$ right above.

These generic equivariant puzzles have the simple interpretation that
they are colored {\em lattice paths}, where the lattice is triangular
and the paths go Southwest or Southeast with the only constraint that
they cannot share edges (in particular, they are allowed to cross at
horizontal edges). Nonequivariant generic puzzles are the subset of
them in which no two lines of the same color cross, and no horizontal
edge is empty.

\rem[gray]{ AK: where do you want to state the $\chi$ rule?
  End of \S\ref{ssec:motivic}? PZJ: actually I put it in 2.3 cause it requires the geometric setup that isn't in the intro.
  I did mention it in the plan in 1.7.
  One thing to point out here is that
  the things being intersected are slightly different in sep-desc
  and almost-sep-desc, even if the permutations are the same, and it'd be
  nice if we could do an example where the two numbers are different.
  }

\begin{ex}
  There are three nonequivariant puzzles with sides $\lambda=\_2\_2$,
  $\mu=10\_\_$, $\nu=2120$:
  \begin{center}
    \tikzset{every picture/.style={scale=0.5,every node/.style={scale=0.5}}}
    \input exchi.tex
  \end{center}
  Note that these puzzles contain triangles that are not allowed by Theorem~\ref{thm:sepdesc}, even
  in $K$-theory. As an application of Theorem~\ref{thm:euler}, we compute
  \[
    \chi\left(g\,p_1^{-1}(X^{\_2\_2}_\circ)\cap g' p_2^{-1}(X^{10\_\_}_\circ)\cap
      g'' X^{0212}_\circ\right)=3
  \]
  Indeed, given three flags $\text{point}_i\subset \text{line}_i\subset \text{plane}_i\subset \PP^3$, we have
  \begin{align*}
    g\, X^{\_2\_2} &= \{\text{lines that intersect line}_1\}
    \\
    g' X^{10\_\_} &= \{(\text{point},\text{line}):\,\text{point on plane}_2\}
    \\
    g'' X^{0212} &= \{(\text{point},\text{line}):\, \text{line intersects line}_3\}
  \end{align*}
  So $g\,p_1^{-1}(X^{\_2\_2})\cap g' p_2^{-1}(X^{10\_\_})\cap g''  X^{0212}$
  is isomorphic to the variety of lines in $\PP^3$ that intersect two
  given lines in general position (the point being determined by the
  line as the intersection of that line with plane$_2$), that is to
  $\PP^1\times \PP^1$. In particular it is of dimension $2$ which
  fixes the sign in Theorem~\ref{thm:euler}.  Next we substract
  divisors by inclusion/exclusion:
\begin{align*}
g\, X^{\_2\_2}_\circ &= g\, X^{\_2\_2} - (\{\text{lines that contain point}_1\}\cup\{\text{lines inside plane}_1\})
\\
g' X^{10\_\_}_\circ &=g' X^{10\_\_} - ( 
\{(\text{point},\text{line}):\,\text{point on line}_2\}
\cup
\{(\text{point},\text{line}):\,\text{line inside plane}_2\}
)
\\
g'' X^{0212}_\circ &= g'' X^{0212} - (
\{(\text{point},\text{line}):\, \text{line contains point}_3\}
\cup
\{(\text{point},\text{line}):\, \text{line inside plane}_3\}
)
\end{align*}
and find
\[
g p_1^{-1}(X^{\_2\_2}_\circ)\cap g' p_2^{-1}(X^{10\_\_}_\circ)\cap g'' X^{0212}
\cong \PP^1\times\PP^1-6\PP^1+11\text{points}
\]
where in this last equation, the r.h.s.\ is in the sense of constructible functions.
One finds the desired Euler characteristic $2\times 2-6\times 2+11=3$.
\end{ex}

\newcommand\php{\phantom{+}}
\subsection{The $B$-matrix}
With a view towards the $q\to0$ limit, we now introduce the $B$-matrix.
It acts on a $(d+4)$-dimensional space, generated by
the usual Cartan generators $x_i$, $i\in \{0<\cdots<k<\_<k+1<\cdots<d\}$, as well as $y_1,y_2$. 

We define $B$ to be the skew-symmetric form satisfying \rem[gray]{the
  one in the notes is fine, but this one is more symmetric. in either
  case, note that $B$ is rank one less than max}
\begin{align*}
B(x_i,x_j)&=-1\qquad\rlap{$i<j,\ i,j\ne\_$}
\\
B(x_\_,x_j)&=\php 0
&
B(x_i,y_1)&=\php 0
&
B(x_i,y_2)&=\php 0
\\
B(x_\_,y_1)&=-1
&
B(x_\_,y_2)&=+1
&
B(y_1,y_2)&=-1
\end{align*}

We now check that $B$ satisfies the hypothesis of Lemma~\ref{lem:twist}.
We compute $B(wt(e_{a,i}),wt(e_{a,j}))$ case by case:
\begin{itemize}
\item $a=1$: the weights are
$x_i+y_1$ where $i\in \{0<\cdots<k<\_\}$, and one finds 
\[
B(x_i+y_1,x_j+y_1)=\begin{cases}
B(x_i,x_j)=\text{sign}(i-j)&i,j\ne\_\\
B(x_i,y_1)=-1 & i\ne\_,\ j=\_\\
B(y_1,x_j)=1 & i=\_,\ j\ne\_\\
0 & i=j=\_
\end{cases}
\]
\item $a=2$: the weights are
$x_i+y_2$ where $i\in \{\_<k+1<\cdots<d\}$, and similarly
\[
B(x_i+y_2,x_j+y_2)=\begin{cases}
B(x_i,x_j)=\text{sign}(i-j)&i,j\ne\_\\
B(x_i,y_2)=1 & i\ne\_,\ j=\_\\
B(y_2,x_j)=-1 & i=\_,\ j\ne\_\\
0 & i=j=\_
\end{cases}
\]
\item $a=3$: the weights are
$x_i+x_\_+y_1+y_2$ where $i\in\{0<\cdots<k<k+1<\cdots<d\}$, and one finds 
\[
B(x_i+x_\_+y_1+y_2,x_j+x_\_+y_1+y_2)=
B(x_i,x_j)=\text{sign}(i-j)
\]
\end{itemize}

We are now in a position to take the limit $q\to0$.

\subsection{The limit $q\to0$: nonequivariant puzzles}\label{ssec:sepdescq0}
For pedagogical reasons we perform the limit $q\to0$ twice, first on $U$ and $D$
only, then on $\check R_{1,2}$.

In order to twist with $\Omega = q^{\frac{1}{2}B}$,
we compute the inversion charges of every triangle; we list up-pointing
triangles, only, since inversion charge is invariant under $180^\circ$ rotation:
\[
\begin{split}
\inv(\uptri{}{i}{i})&=\inv(\uptri{i}{i}{})=\inv(\uptri{j}{ij}{i})=0
\\
\inv(\uptri{i}{ij}{j})&=1
\end{split}
\qquad i<j
\]

The twisted intertwiners take the form
\begin{align*}
\tilde U(z)^{ij}_a &=\uptri{i}{a}{j}=
\begin{cases}
0&a\ne ij\\
1& i>j\\
-q^2&i<j,\ i\ne\_\text{ and }j\ne\_\\
-q& i<j,\ i=\_\text{ or }j=\_
\end{cases}
\\
\tilde D(z)^a_{ij} &=\downtri{j}{a}{i}=
\begin{cases}
0&a\ne ij\\
1& i<j\\
-1&i>j,\ i\ne\_\text{ and }j\ne\_\\
-q^{-1}&i>j,\ i=\_\text{ or }j=\_
\end{cases}
\end{align*}

Now perform the following change of basis:
\begin{align}\notag
&\text{in $V_1$:}\quad  e'_{1,i}=-q^{-1} e_{1,i}\text{ for }i\le k
\\\label{eq:sepdescconj}
&\text{in $V_2$:}\quad e'_{2,j}=-q^{-1} e_{2,j}\text{ for }j\ge k+1
\\\notag
&\text{in $V_3$:}\quad e'_{3,ij}=-q^{-1} e_{3,ij}\text{ when }i,j\le k\text{ or }i,j\ge k+1
\end{align}
all other basis vectors remaining unchanged.
Note that none of the labels above occur on the boundary of puzzles, so the fugacity of the puzzle is unaffected by such transformations.

We find:
\begin{align*}
\tilde U'(z)^{ij}_a &=\uptri{i}{a}{j}=
\begin{cases}
0&a\ne ij\\
1& i>j\text{ or } i=\_\text{ or }j=\_
\\
-1& i<\_<j
\\
-q^2& i<j<\_\text{ or }\_<i<j
\end{cases}
\\
\tilde D'(z)^a_{ij} &=\downtri{j}{a}{i}=
\begin{cases}
0&a\ne ij\\
1& i<j\text{ or } i=\_\text{ or }j=\_\\
-1& i>j>\_\text{ or } \_>i>j\\
-q^2 & i>\_>j
\end{cases}
\end{align*}

At this stage, we can safely take the limit $q\to0$, resulting in the triangles (including
$K$-triangles) of Theorem~\ref{thm:sepdesc}.

\subsection{The limit $q\to0$: equivariant puzzles}
\label{ssec:sepdescq0T}
We now repeat the procedure for $\check R_{1,2}$. Here are the inversion charges
of the equivariant rhombi:
\[
  \inv\rh{}{}{}{} = -1 \qquad\qquad\qquad\qquad \inv{\rh{i}{i}{i}{i}}=1\quad i\ne\_
\]
Twisting the $R$-matrix results in
\[
\tilde R_{1,2}(z)^{ml}_{ij}=
\rh{i}{j}{l}{m}
=
\begin{cases}
\frac{q(1-z)}{1-q^2z}\begin{cases}q^{-1}&i=\_\\q&\text{else}\end{cases}& i=j=m=l
\\
\begin{cases} 1 & i<j\text{ or }i=\_\text{ or }j=\_
\\
q^2 &\text{else}
\end{cases}
&  i=l\ne j=m
\\
\frac{(1-q^2)z}{1-q^2z}
\begin{cases}
-q & i<j,\ i=\_\text{ or }j=\_\\
-q^2 & i<j,\text{ else}\\
-q^{-1} & i>j,\ i=\_\text{ or }j=\_\\
-1&i>j,\text{ else}
\end{cases}
&i=m\ne j=l
\\
0 & \text{else}
\end{cases}
\]

We then perform the change of basis above \eqref{eq:sepdescconj}, and find the final form:
\[
\tilde R'_{1,2}(z)^{ml}_{ij}=
\rh{i}{j}{l}{m}
=
\begin{cases}
\frac{1-z}{1-q^2z}\begin{cases}1&i=\_\\ q^2&\text{else}
\end{cases}
& i=j=m=l \\
\begin{cases} 1 & i<j\text{ or }i=\_\text{ or }j=\_
\\
q^2 &\text{else}
\end{cases}
& i=l\ne j=m
\\
\frac{1-q^2}{1-q^2z}\begin{cases}
-q^2z& i<j<\_\text{ or }\_<i<j
\\
-z& i<\_<j
\\
-1 & j<i<\_\text{ or }\_<j<i
\\
-q^2 & j<\_<i
\\
z&i=\_<j\text{ or }i<j=\_
\\
1&j=\_<i\text{ or }j<i=\_
\end{cases}
 & i=m\ne j=l
\\
0 & \text{else}
\end{cases}
\]
At $q=0$,
\[
\rh{i}{j}{l}{m}
=
\begin{cases}
1-z& i=j=m=l=\_
\\
1 & i=l\ne j=m,\ i<j\text{ or }i=\_\text{ or }j=\_
\\
-z & i=m<\_<j=l
\\
-1 & j=l<i=m<\_\text{ or }\_<j=l<i=m
\\
z&j=l,i=m,\ i=\_<j\text{ or }i<j=\_
\\
1&j=l,i=m,\ j=\_<i\text{ or }j<i=\_
\\
0 & \text{else}
\end{cases}
\]
which coincides with the equivariant fugacities of
Theorem~\ref{thm:sepdesc}.

\section{Almost-separated descents}\label{sec:almostsepdesc}
\subsection{The data from \S\ref{ssec:tensor}}
\label{ssec:almostsepdescdata}
The algebra is $\Uq(\lie{so}_{2(d+2)}[z^\pm])$, which has three
minuscule representations $\CC^{2(d+2)}, spin_+, spin_-,$ corresponding to
the tail and the two antlers of the Dynkin diagram $D_{d+2}$.
We use the standard Cartan subalgebra $\lie h := \oplus^{d+2} \lie{so}_2
\leq \lie g = \lie{so}_{2(d+2)}$, naming its co\"ordinates $[0,d]\sqcup \{\_\,\}$.

Our three representations and their weights are
$$
  \def\arraystretch{1.2}
\begin{array}{c|c|lc}
i & V_i & \text{its weights} \\ \hline  
1 & spin_+ &
\left\{ \frac{1}{2}(\pm 1,\pm 1,\ldots,\pm 1) \right\} 
 & \text{ with evenly many $-$}\\
2 & (spin_-)^* &
\left\{ \frac{1}{2}(\pm 1,\pm 1,\ldots,\pm 1) \right\} 
 &\text{ with oddly many $+$}\\
3 & \CC^{2(d+2)} & \{\pm x_i\colon i\in [0,d] \sqcup \{\_\} \} 
\end{array}
$$
so when we add a weight of $spin_+$ to a weight of $(spin_-)^*$ we get
an integer vector, whose total is an odd integer (hence has a chance to
be a weight of $\CC^{2(d+2)}$). Take $\alpha = d$. 
For notational convenience let
$\vec 1 := (1,\ldots,1)$ denote the all-$1$s vector.

To specify a Borel subalgebra (or a positive Weyl chamber) we
indicate\footnote{%
  This is {\em slightly} too much information -- it specifies a $B/C$ Weyl
  chamber, and those glue together in pairs to make type $D$ Weyl chambers.
}
which of $\{+x_,-x_i\}_{i\in [0,d]\sqcup \{\_\,\}}$ are positive,
and then, the order on the positive ones. As the answer is somewhat
unintuitive we put off specifying it until later, when it will be
more uniquely determined. \junk{; for right now order them
  $x_0,x_1,x_2,\ldots,x_d,x_\_\,$, which will soon come in groups
  $(x_0,\ldots,x_{k-1})$, $(x_k)$, $(x_{k+1},\ldots,x_d)$, $(x_\_)$. }
  
The $V^A_i$ faces (as required in \S\ref{ssec:genthm})
are determined by maximizing dot product with the following coweights:
\junk{Check order, and probably go through weak (c) in detail}

\junk{ {\em Can} get $\eta_3 = \eta_1 + \eta_2$:
\begin{itemize}
\item $\eta_1 = (+a^{k}, +d^{d-k}, -a)$, 
  so $V_1^A$ has weights 
  $\{\vec 1/2\} \sqcup \{\vec 1/2 - (x_i+x_\_)\}\colon i \neq \_ \}$
\item $\eta_2 = (-b^{k}, -c^{d-k}, -c)$,
  so $V_2^A$ has weights $\{-\vec 1/2 + x_i\colon i \in [k,d] \sqcup \{\_\,\}\}$
  (i.e. $i\neq \_$)
\item $\eta_3 = ((a-b)^{k}, (d-c)^{d-k}, -a-c)$,
  so $V_3^A$ has weights $\{-x_i \colon i<k \} \sqcup \{+x_i \colon i\geq k \}$
\end{itemize}
$$ d>a>0 \qquad b>c>0  \qquad b-a = d-c \geq |a+c| \qquad c+b-a = d > 2c+a 
\qquad b > 2a+c 
$$
$$ (a,b,c,d) = (1,4,1,4) $$
} 

\junk{ These are the good ones we should use in the final version
  \begin{itemize}
  \item $\eta_1 = (\php 1^{k}, \php 4^{d-k+1}, -1)$, 
    so $V_1^A$ has weights 
    $\{\vec 1/2\} \sqcup \{\vec 1/2 - (x_i+x_\_)\}\colon i \in [0,k-1] \}$
  \item $\eta_2 = (-4^{k}, -1^{d-k+1}, -1)$,
  so $V_2^A$ has weights $\{-\vec 1/2 +x_i \colon i\in [k,d] \sqcup \{\_\,\} \}$
  \item $\eta_3 = (-3^{k}, \php 3^{d-k+1}, \underline{-2})$,
    so $V_3^A$ has weights $\{-x_i \colon i<k\} \sqcup \{+x_i \colon i\geq k \}$
  \end{itemize}
  Again we have $\eta_3 = \eta_1+\eta_2$, though we won't use this
  for anything.

  Below are the bad ones that make $k$ not appear on the S edge
} 

\junk{AK: need to make clear the order used on the basis, when
  writing these $\eta_i$. Is it the $0$ that could be changed?}

\begin{itemize}
\item[] $\phantom{\eta_1 = (}x_{<k}\ \ \ \ x_k\ \ \ \ \ x_{>k}\ \ \ \ \ x_\_$
\item $\eta_1 = (\php 1^{k}, \php 1, \php 3^{d-k}, -1)$,  so $V_1^A$ has weights 
  $\{\vec 1/2 - (x_i+x_\_)\colon i \in [0,k] \} \sqcup \{\vec 1/2\}$
\item $\eta_2 = (-3^{k}, -1, -1^{d-k}, -1)$, so $V_2^A$ has weights
  $\{-\vec 1/2 +x_i \colon i\in [k,d] \sqcup \{\_\,\} \}$
\item $\eta_3 = (-2^{k}, \php 0, \php 2^{d-k}, {-2} )$,
  so $V_3^A$ has weights
  $\{-x_i \colon i<k \} \sqcup \{+x_i \colon i> k\} \sqcup \{-x_\_\,\}$
\end{itemize}

\junk{
$$ 0<a<b \qquad c > d > 0 \qquad 0 < c-a = b-d = a+f \geq |a-d| $$
$$ 2b = c+3d \qquad c = 2a+d \qquad a=d=1, c=3, b=3 $$
\begin{itemize}
\item $\eta_1 = (\php a^{k}, \php b^{d-k}, \php a, -a)$,  so $V_1^A$ has weights 
  $\{\vec 1/2 - (x_i+x_\_)\colon i \in [0,k] \} \sqcup \{\vec 1/2\}$
\item $\eta_2 = (-c^{k}, -d^{d-k}, -d, -d)$, so $V_2^A$ has weights
  $\{-\vec 1/2 +x_i \colon i\in [k,d] \sqcup \{\_\,\} \}$
\item $\eta_3 = ((a-c)^{k}, (b-d)^{d-k}, a-d, -a-d)$, so $V_3^A$ 
  $\{-x_i \colon i<k \} \sqcup \{+x_i \colon i> k\} \sqcup \{-x_\_\,\}$
\end{itemize}
} 

We have again managed that $\eta_3 = \eta_1 + \eta_2$
(though not for any useful reason we could come up with).
If we give that up, the $0$ can in fact be
changed to $+2$ or $-2$, enlarging the third face, but doing so
doesn't get us any extra Schubert calculus in the end.

The way that we {\em draw} the weights of $V_{1,2,3}$ as edge labels is
slightly complicated, and was optimized to have the nicest-looking puzzles.
\begin{itemize}
\item On $/$ edges with weight $+\vec 1/2 - \sum_{i\in R} x_i$, we draw
  the set $R$ (except for $\_$ ). \\ Since we know $\#R$ to be even, we can
  infer whether $\_$ is or isn't in $R$ even though it isn't drawn.
  To get the right order, we need   $ -x_0 < -x_1 < \ldots < -x_k < +x_\_$ . 
\item On $\backslash$ edges with weight $-\vec 1/2 + \sum_{i\in S} x_i$, we draw
  the set $S$ (except for $\_$ ). \\ Since we know $\#S$ to be odd, we can
  infer whether $\_$ is or isn't in $S$ even though it isn't drawn.
  To get the right order, we need $ +x_\_ < +x_k < +x_{k+1} < \ldots < +x_d $ .
\item On $-$ edges with weight $+x_j$ we draw $\nearrow j$
  (or $even$ if $j=\_$ ), 
  and with weight $-x_j$ we draw $\searrow j$ (or $odd$ if $j=\_$ ).
  To get the right order, we need \\
  $ -x_0 < -x_1 < \ldots < -x_{k-1} < -x_\_ < +x_{k+1} < \ldots < +x_d $ .
\end{itemize}

Combining these conditions, we get a consistent set of inequalities
$$
\begin{array}{rl}
  x_0 > x_1 > \ldots > x_{k-1} &  \\ 
  & > x_k > \pm x_\_ \\
  x_d > x_{d-1} > \ldots > x_{k+1}  & \\
\end{array}
$$
One of the many ways to achieve this is to take all the $x_i$ positive
(including $x_\_$) and order them $0,\ldots,k-1, d,\ldots,k, \_\,$
decreasing.

Had we not made the $x_\_$ label blank, it would need to appear on
every edge on the NW boundary of a puzzle.

In a triangular puzzle piece we will have
$ \left( \vec 1/2 - \sum_{i\in R} x_i \right)
+ \left( - \vec 1/2 + \sum_{i\in S} x_i \right) = \pm x_j, $
so $R \cap S = \min(R,S) \subsetneq \max(R,S) = R\cup S$ and the
difference is by the one element $j$.

We check (c), (d), having proven (a) in general.
Let $\vec g := \vec 1/2 - x_\_$ for short.
Using our strange order $0,\ldots,k-1, d,\ldots,k, \_\,$ on
co\"ordinates, we have
$$
\begin{array}{rl}
  \text{$V_1^A$'s weights: } &
\vec g + x_\_,\quad \vec g - x_k,\quad \ldots,\quad \vec g - x_d, 
\quad \vec g - x_{k-1}, \quad \ldots,\quad \vec g - x_0 \\
  \text{Their differences: } &
x_\_ + x_k, x_{k-1} - x_k, \ldots, x_d - x_{d+1}, 
x_{k-1} - x_d, x_{k-2} - x_{k-1}, \ldots, x_0 - x_1 
\\
  \text{$V_2^A$'s weights: } &
-\vec 1/2 + x_d,\quad -\vec 1/2 + x_{d-1}, \quad \ldots \quad, 
-\vec 1/2 + x_k, \quad -\vec 1/2 + x_\_ \\
  \text{Their differences: } &
x_d - x_{d-1}, x_{d-1} - x_{d-2}, \ldots, x_{k+1} - x_k, x_k - x_\_ 
\\
  \text{$V_3^A$'s weights: } &
+x_d, \quad \ldots, \quad +x_k, \quad -x_{k-1}, \quad -x_{k-2}, 
\quad\ldots, \quad -x_1, \quad -x_0  \\
  \text{Their differences: } &
x_d-x_{d-1}, \ldots, x_{k+1}-x_k, x_k+x_{k-1}, x_{k-2}-x_{k-1}, \ldots, x_0-x_1 
\end{array}
$$ 
and in each case the differences form a type $A$ root subsystem, as needed
for the weak version of (c).

\junk{ What if we used the order $0\ldots k-1, \_, k\ldots d$? We'd get
$$
\begin{array}{rl}
  \text{$V_1^A$'s weights: } &
\vec g + x_\_,\quad \vec g - x_k,\quad \ldots,\quad \vec g - x_d, 
\quad \vec g - x_{k-1}, \quad \ldots,\quad \vec g - x_0 \\
  \text{Their differences: } &
x_\_ + x_k, x_{k-1} - x_k, \ldots, x_d - x_{d+1}, 
x_{k-1} - x_d, x_{k-2} - x_{k-1}, \ldots, x_0 - x_1 
\\
  \text{$V_2^A$'s weights: } &
-\vec 1/2 + x_d,\quad -\vec 1/2 + x_{d-1}, \quad \ldots \quad, 
-\vec 1/2 + x_k, \quad -\vec 1/2 + x_\_ \\
  \text{Their differences: } &
x_d - x_{d-1}, x_{d-1} - x_{d-2}, \ldots, x_{k+1} - x_k, x_k - x_\_ 
\\
  \text{$V_3^A$'s weights: } &
+x_d, \quad \ldots, \quad +x_k, \quad -x_{k-1}, \quad -x_{k-2}, 
\quad\ldots, \quad -x_1, \quad -x_0  \\
  \text{Their differences: } &
x_d-x_{d-1}, \ldots, x_{k+1}-x_k, x_k+x_{k-1}, x_{k-2}-x_{k-1}, \ldots, x_0-x_1 
\end{array}
$$ 
} 

To see (d), consider three weights $\omega_1 + \omega_2 = \omega_3$ with
$\omega_i$ a sum of $n$ weights from $V_i^A$. Ignoring the $\pm \vec 1/2$
summands (of which there will obviously be $n$ in $\omega_1$ canceling
$-n$ in $\omega_2$) we get $\omega_1$ is $-mx_\_$ minus a sum of $m$ $x_{i<k}$s
(for some $m\leq n$), and $\omega_2$ is $m' x_\_$ plus a sum of $n-m'$
many $x_{j\geq k}$s (for some $m'\leq m$), totaling $\omega_1 + \omega_2$
which is then a sum of $n-(m-m')$ many $\pm x_i$ minus $(m-m')x_\_$.
To verify property (d) we compute the three standard parabolics,
each of which is a group of block-upper-triangular matrices.

$$ \begin{array}{cclcrc}
  \text{$P_1$'s blocks } 
  &=& \big( \sum\nolimits_{i=0}^{k-1} c_i, &c_k,& c_{k+1}, \ldots, c_d\big)\\
  \text{$P_2$'s blocks } 
  &=& \big( c_0, \ldots, c_{k-1}, &c_k,&  \sum\nolimits_{i=k+1}^d c_i\big)\\
  \text{$P_3$'s blocks } 
  &=& \big( c_0, \ldots, c_{k-1}, &c_k,&  c_{k+1}, \ldots, c_d \big)
      &\qquad \text{so }P_3 = P_1\cap P_2
\end{array}
$$

\subsection{The intertwiners}\label{ssec:almostsepdescR}
We now describe the intertwiners $\check R_{1,2}$, $U$ and $D$, which are the building blocks of our puzzles.
Our reference for this section is \cite{Okado-BD}.

In order to help with the conversion to the unusual labeling of weights
of \S\ref{ssec:almostsepdescdata}, we introduce the bijection
\begin{equation}\label{eq:mapping}
  w(0,\ldots,k-1,d,\ldots,k,\_) := (1,\ldots,k,k+1,\ldots,d,d+1,d+2)
\end{equation}
Write $\tilde x_j = x_{w^{-1}(j)}$.
Then the simple roots of $\lie{g}=\lie{d_{d+2}}$ are $\alpha_j=\tilde x_j-\tilde x_{j+1}$, $j=1,\ldots,d$, and $\alpha_\pm=\tilde x_{d+1}\pm \tilde x_{d+2}$.
$spin_\epsilon$ is the fundamental module with highest weight $\omega_\epsilon=\frac{1}{2}(\tilde x_1+\cdots+\tilde x_{d+1}+\epsilon \tilde x_{d+2})$, the other
fundamental weights are $\omega_j=\tilde x_1+\cdots+\tilde x_{j}$, $j=1,\ldots,d$. We also introduce the notation
$W_j$ for the irreducible module with highest weight $\tilde x_1+\cdots+\tilde x_{d+2-j}$, that is $\omega_{d+2-j}$ if $1<j<d+2$, 
$\omega_++\omega_-$ if $j=1$,
$2\omega_+$ if $j=0$.


Recall that $V_1=spin_+$ and $V_2=(spin_-)^*$.
According to \cite[Eq.~(4.2)]{Okado-BD},
one has the $\Uq(\lie{g})$-module\footnote{%
  Remember that the irreducibility we cited \cite{Chari-braid} for
  is about the $\Uq(\lie{g}[z^\pm])$-module structure, not
  the $\Uq(\lie{g})$-module structure  }
decomposition \rem[gray]{$n=d+2$, $k=n-1=d+1$, $x=1/z$ from the notes}
\[
  spin_+ \otimes spin_-^*
  = W_{d+1\ \text{mod}\ 2} \oplus  \cdots \oplus W_{d-1} \oplus W_{d+1}
\]
In particular $W_{d+1}\cong V_3$ as $\Uq(\lie g)$-modules.

The $R$-matrix from $spin_+\otimes spin_-^*$ to $spin_-^*\otimes spin_+$ is given in terms of operators $P_j$ which
are $\Uq(\lie{g})$
-intertwiners
implementing the channels $spin_+\otimes spin_-^*\to W_j\to spin_-^*\otimes spin_+$: \cite[\S 5]{Okado-BD}
\begin{equation}\label{eq:defRD}
\check R_{1,2}(z)=c(z)^{-1} \sum_{\substack{j=0\\j\equiv d+1\\(\text{mod }2)}}^{d+1} \rho_j(z) P_j
\end{equation}
Here we have introduced an extra normalization factor $c(z)$ which will be fixed below. The functions $\rho_j(z)$ are given by
\[
  \rho_j(z)=
  \begin{cases}
    \displaystyle
    \ \ \prod_{i=1}^{j/2} (q^{2i-1}-q^{-2i+1}z)
    \prod_{i=j/2+1}^{(d+1)/2} (q^{2i-1}z-q^{-2i+1})
    &j\text{ even, }d\text{ odd}
    \\
    \displaystyle
    \prod_{i=1}^{(j-1)/2} (q^{2i}-q^{-2i}z)
    \ \ \ \ \prod_{i=(j+1)/2}^{d/2} (q^{2i}z-q^{-2i})
    &j\text{ odd, }d\text{ even}
  \end{cases}
\]
Note that all $\rho_j(z)$ with $j<d+1$ have a factor of $q^{2i-1}z- q^{-2i+1}$ for $i=(d+1)/2$ ($d$ odd),
resp.\ $q^{2i}-x q^{-2i}$ for $i=d/2$ ($d$ even), but $\rho_{d+1}(z)$ doesn't.
This implies that $\check R_{1,2}(z=q^{-2d})$ is proportional to $P_{d+1}$ and therefore the factorization \eqref{eq:Rfac} occurs at $\alpha=d$.
Because $\check R_{1,2}(z=q^{-2d})$ is a $\Uq(\lie{g}[z^\pm])$-intertwiner, its image is $\Uq(\lie{g}[z^\pm])$-invariant and isomorphic
to $V_3(z)$ as a $\Uq(\lie{g}[z^\pm]$-module (there is some arbitrariness in shifting $z\mapsto az$ which is fixed by this statement);
and therefore we can choose $U$, $D$ to be $\Uq(\lie{g}[z^\pm])$-intertwiners themselves, as in \eqref{eq:RUD}.

In fact, one has the following expression for $U$ and $D$, derivable from the explicit expression of the $P_j$ given in 
\cite[Prop.~5.1]{Okado-BD}:
\begin{align}
\notag
 \big< \epsilon \tilde x_i, U (a_1,\ldots,a_{i-1},\epsilon/2,a_{i},\ldots,a_{d+1})\otimes
&\\\label{eq:defUD}
(-a_1,\ldots,-a_{i-1},\epsilon/2,-a_{i},\ldots,-a_{d+1}) \big>&= c_U (-q)^M
\\
\notag
\big< 
(a_1,\ldots,a_{i-1},\epsilon/2,a_{i},\ldots,a_{d+1})\otimes
&\\\label{eq:defDD}
(-a_1,\ldots,-a_{i-1},\epsilon/2,-a_{i},\ldots,-a_{d+1}), 
D \epsilon \tilde x_i
  \big>&= c_D (-q)^{-M}
\\\notag
M := {\sum_{\substack{1\le j\le d+1\\a_j=-1/2}}(d+1-j)}
&
\end{align}
where vectors are (provisionally) described by their weights,
$a\in \{+1/2,-1/2\}^{d+1}$ and $\epsilon\in\{+1,-1\}$. We shall convert to the labeling of \S\ref{ssec:almostsepdescdata} below.
The constants $c_U$ and $c_D$ will also be fixed below.

\begin{rmk}
  As soon as $d\ge 3$, the $R$-matrix \eqref{eq:defRD} has more than 2
  terms in its decomposition. This seems somehow related to the lack
  of an equivariant rule for Schubert classes, as already pointed out
  in \cite[\S1.3]{artic71} in the context of 3-step Schubert calculus.
\end{rmk}

\subsection{Proof of property (b)}
\begin{prop}\label{prop:trivpuzzled}
  There is a unique puzzle
  \tikz[scale=1.8,baseline=0.5cm]{\uptri{\lambda}{\omega}{\mu}} with a
  weakly increasing string $\omega$ at the bottom, and $\lambda$ (resp.\ 
  $\mu$) with content $\omega_\le$ (resp.\ $\omega_\ge$);
  it has $\lambda=\omega_\le$ and $\mu=\omega_\ge$, and
  its labels on diagonal edges are constant along each diagonal
  (NW/SE or NE/SW). 
\end{prop}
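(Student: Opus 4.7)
The plan is to adapt the proof of Proposition~\ref{prop:trivpuzzlea} to the almost-separated-descent setting by inducting on the puzzle size $n$. The base case $n=0$ is trivial; for the inductive step, I examine the leftmost bottom label $\omega_1$, which by monotonicity of $\omega$ is the smallest entry of $\omega$ in the order on $\Al_3$, and I case on its type.

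If $\omega_1=\da i$ with $i$ the smallest such that $\da i$ occurs in $\omega$, the only admissible up-triangle at the SW corner is $\uptri{iX}{\da i}{X}$ for some $X$ with $i<X$. I then trace upward along the strip of triangles adjacent to the NW boundary: the down-triangle immediately above must have $\backslash$-edge labeled $X$, and the classification of admissible $180^\circ$-rotated pieces forces every $\backslash$-edge in the strip to share the label $X$, while the horizontal edges in the strip encode the bottom of a smaller size-$(n-1)$ puzzle. Peeling this strip yields a puzzle of size $n-1$ whose bottom is $\omega$ shorn of its first entry (still monotone), whose NW content is $\omega_\le$ minus one letter $i$, and whose NE content is unchanged; induction applies. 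The symmetric case $\omega_1=\ua j$ (necessarily $\omega$ consisting only of $\ua$-labels) is treated analogously at the SE corner. If $\omega_1=odd$ (necessarily $\omega$ having no $\da$-labels), the bottom-left piece is $\uptri{i}{odd}{i}$, and the alphabet constraint on the NW boundary (together with the fact that the first non-blank entry of $\omega_\le$ is $k$ in this scenario) forces $i=k$; peeling the resulting strip of $odd$/$even$ triangles reduces to a smaller puzzle.

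The main obstacle will be verifying the rigidity of the upward propagation, particularly in the $odd$ case: one must show that the admissible $180^\circ$-rotated pieces sitting on a $\backslash$-edge labeled $\{k\}$ (or on the relevant transitions to $\varnothing$) and compatible with the $V_3^A$-membership of internal horizontal edges are uniquely determined. Once this is established, the $\da$ and $\ua$ cases follow by a direct adaptation of Proposition~\ref{prop:trivpuzzlea}'s argument, after accounting for the richer subset-valued labels on diagonal edges in the almost-separated-descent puzzle pieces.
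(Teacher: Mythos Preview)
Your inductive peeling strategy is different from the paper's argument and, while salvageable, has a genuine gap as written. The paper does not peel strips at all: it argues globally that every $\searrow i$-path on the bottom must trace straight SE back to the NW boundary (and symmetrically every $\nearrow j$-path goes straight NE), the key input being that NW boundary edges lie in $V_1^A$ and hence carry at most one visible label. Once the $\searrow$ and $\nearrow$ regions are pinned down, the remaining $odd$ bottom triangles are forced to carry the singleton $\{k\}$, since $k$ is the only label in both $\lambda$'s and $\mu$'s alphabets, and the $k$-paths then complete uniquely.

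The gap in your version is precisely the rigidity step. You write that the SW corner is $\uptri{iX}{\searrow i}{X}$ ``for some $X$ with $i<X$,'' but you never deduce $X=\varnothing$; this is immediate once you note that $iX$ sits on the NW \emph{boundary}, hence is a singleton. Without $X=\varnothing$ the claim ``every $\backslash$-edge in the strip shares the label $X$'' has no force: even with left $\backslash$-edge $\varnothing$ a down-triangle has two options (top $\searrow i'$ or $even$), so the piece classification alone does not determine the strip. What actually determines it is repeating the boundary argument at each $T_j^1$: its NW edge is again on the boundary, hence singleton or blank, and this together with the restricted set of possible bottoms forces $T_j^1$'s NE edge back to $\varnothing$. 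The tool you propose instead --- ``$V_3^A$-membership of internal horizontal edges'' --- is unavailable, since internal horizontal edges carry arbitrary $V_3$ weights, not only $V_3^A$ ones. Two smaller corrections: in the $odd$ case the strip contains pieces $\uptri{\{k\}}{odd}{\{k\}}$ and $\uptri{\varnothing}{\nearrow k}{\{k\}}$ (and their $\nabla$ counterparts), not $odd$/$even$ pieces; and peeling the leftmost strip does remove $\mu_1$ from the NE side (the strip analysis shows $\mu_1=\varnothing$ in the $\searrow$ case and $\mu_1=\{k\}$ in the $odd$ case, so the new content is correct, but it is not literally ``unchanged'').
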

\begin{proof}
Labels form paths that go E, NE or SE. In particular, the path ending on the leftmost bottom edge must come from the leftmost NW edge;
and then inductively, because NW edges can only carry a single label, every path ending on the bottom edge must go straight SE. A similar
reasoning holds for paths starting on the bottom edge; at this stage we have the following configuration:
\begin{center}
\begin{tikzpicture}[scale=0.5,every node/.style={scale=0.5},execute at begin picture={\bgroup\tikzset{every path/.style={}}\clip (-3.72,-.637138) rectangle ++(7.44,6.58376);\egroup},x={(1.56597774923292cm,0cm)},y={(0cm,-1.56597907993384cm)},baseline=(current  bounding  box.center),every path/.style={draw=black,fill=none},line join=round]
\begin{scope}[every path/.append style={fill=white,line width=.0313195682916704cm}]
\path (-.5,.866025403784439) -- (0,0) -- (.5,.866025403784439) -- cycle;
\path (-.5,.866025403784439) -- (.5,.866025403784439) -- (0,1.73205080756888) -- cycle;
\path (0,1.73205080756888) -- (.5,.866025403784439) -- (1,1.73205080756888) -- cycle;
\path (0,1.73205080756888) -- (1,1.73205080756888) -- (.5,2.59807621135332) -- cycle;
\path (.5,2.59807621135332) -- (1,1.73205080756888) -- (1.5,2.59807621135332) -- cycle;
\path (.5,2.59807621135332) -- (1.5,2.59807621135332) -- (1,3.46410161513775) -- cycle;
\path[opacity=0,line width=.109618489020846cm] (1.25,2.1650635094611) -- (1.06,2.59807621135332);
\path[opacity=0,line width=.109618489020846cm] (.75,3.03108891324553) -- (1.06,2.59807621135332);
\path (1,3.46410161513775) -- (1.5,2.59807621135332) -- (2,3.46410161513775) -- cycle;
\path (1,3.46410161513775) -- (2,3.46410161513775) -- (1.5,4.33012701892219) -- cycle;
\path[opacity=0,line width=.109618489020846cm] (1.75,3.03108891324554) -- (1.56,3.46410161513775);
\path[opacity=0,line width=.109618489020846cm] (1.25,3.89711431702997) -- (1.56,3.46410161513775);
\path (1.5,4.33012701892219) -- (2,3.46410161513775) -- (2.5,4.33012701892219) -- cycle;
\path (1.5,4.33012701892219) -- (2.5,4.33012701892219) -- (2,5.19615242270663) -- cycle;
\path[draw=blue,line width=.109618489020846cm] (2.25,3.89711431702997) -- (2.06,4.33012701892219);
\path[draw=blue,line width=.109618489020846cm] (1.75,4.76313972081441) -- (2.06,4.33012701892219);
\node[black,,scale=1.67716288201895] at (2.25,3.89711431702997) {2};
\node[black,,scale=1.39763409073845] at (2,4.33012701892219) {\(\nearrow\)2};
\path (2,5.19615242270663) -- (2.5,4.33012701892219) -- (3,5.19615242270663) -- cycle;
\path[draw=blue,line width=.109618489020846cm] (2.75,4.76313972081441) -- (2.56,5.19615242270663);
\node[black,,scale=1.67716288201895] at (2.75,4.76313972081441) {2};
\node[black,,scale=1.39763409073845] at (2.5,5.19615242270663) {\(\nearrow\)2};
\path (-1,1.73205080756888) -- (-.5,.866025403784439) -- (0,1.73205080756888) -- cycle;
\path (-1,1.73205080756888) -- (0,1.73205080756888) -- (-.5,2.59807621135332) -- cycle;
\path (-.5,2.59807621135332) -- (0,1.73205080756888) -- (.5,2.59807621135332) -- cycle;
\path (-.5,2.59807621135332) -- (.5,2.59807621135332) -- (0,3.46410161513775) -- cycle;
\path (0,3.46410161513775) -- (.5,2.59807621135332) -- (1,3.46410161513775) -- cycle;
\path (0,3.46410161513775) -- (1,3.46410161513775) -- (.5,4.33012701892219) -- cycle;
\path[opacity=0,line width=.109618489020846cm] (.75,3.03108891324553) -- (.56,3.46410161513775);
\path[opacity=0,line width=.109618489020846cm] (.25,3.89711431702997) -- (.56,3.46410161513775);
\path (.5,4.33012701892219) -- (1,3.46410161513775) -- (1.5,4.33012701892219) -- cycle;
\path (.5,4.33012701892219) -- (1.5,4.33012701892219) -- (1,5.19615242270663) -- cycle;
\path[opacity=0,line width=.109618489020846cm] (1.25,3.89711431702997) -- (1.06,4.33012701892219);
\path[opacity=0,line width=.109618489020846cm] (.75,4.76313972081441) -- (1.06,4.33012701892219);
\path (1,5.19615242270663) -- (1.5,4.33012701892219) -- (2,5.19615242270663) -- cycle;
\path[draw=blue,line width=.109618489020846cm] (1.75,4.76313972081441) -- (1.56,5.19615242270663);
\node[black,,scale=1.67716288201895] at (1.75,4.76313972081441) {2};
\node[black,,scale=1.39763409073845] at (1.5,5.19615242270663) {\(\nearrow\)2};
\path (-1.5,2.59807621135332) -- (-1,1.73205080756888) -- (-.5,2.59807621135332) -- cycle;
\path (-1.5,2.59807621135332) -- (-.5,2.59807621135332) -- (-1,3.46410161513775) -- cycle;
\path[opacity=0,line width=.109618489020846cm] (-.94,2.59807621135332) -- (-1.25,2.1650635094611);
\path[opacity=0,line width=.109618489020846cm] (-.94,2.59807621135332) -- (-.75,3.03108891324553);
\path (-1,3.46410161513775) -- (-.5,2.59807621135332) -- (0,3.46410161513775) -- cycle;
\path (-1,3.46410161513775) -- (0,3.46410161513775) -- (-.5,4.33012701892219) -- cycle;
\path[opacity=0,line width=.109618489020846cm] (-.44,3.46410161513775) -- (-.75,3.03108891324553);
\path[opacity=0,line width=.109618489020846cm] (-.44,3.46410161513775) -- (-.25,3.89711431702997);
\path (-.5,4.33012701892219) -- (0,3.46410161513775) -- (.5,4.33012701892219) -- cycle;
\path (-.5,4.33012701892219) -- (.5,4.33012701892219) -- (0,5.19615242270663) -- cycle;
\path[opacity=0,line width=.109618489020846cm] (.25,3.89711431702997) -- (-.25,3.89711431702997);
\path[opacity=0,line width=.109618489020846cm] (-.25,4.76313972081441) -- (.25,4.76313972081441);
\path (0,5.19615242270663) -- (.5,4.33012701892219) -- (1,5.19615242270663) -- cycle;
\path[opacity=0,line width=.109618489020846cm] (.75,4.76313972081441) -- (.25,4.76313972081441);
\node[black,,scale=1.1979727824661] at (.5,5.19615242270663) {odd};
\path (-2,3.46410161513775) -- (-1.5,2.59807621135332) -- (-1,3.46410161513775) -- cycle;
\path (-2,3.46410161513775) -- (-1,3.46410161513775) -- (-1.5,4.33012701892219) -- cycle;
\path[opacity=0,line width=.109618489020846cm] (-1.44,3.46410161513775) -- (-1.75,3.03108891324554);
\path[opacity=0,line width=.109618489020846cm] (-1.44,3.46410161513775) -- (-1.25,3.89711431702997);
\path (-1.5,4.33012701892219) -- (-1,3.46410161513775) -- (-.5,4.33012701892219) -- cycle;
\path (-1.5,4.33012701892219) -- (-.5,4.33012701892219) -- (-1,5.19615242270663) -- cycle;
\path[opacity=0,line width=.109618489020846cm] (-.94,4.33012701892219) -- (-1.25,3.89711431702997);
\path[opacity=0,line width=.109618489020846cm] (-.94,4.33012701892219) -- (-.75,4.76313972081441);
\path (-1,5.19615242270663) -- (-.5,4.33012701892219) -- (0,5.19615242270663) -- cycle;
\path[opacity=0,line width=.109618489020846cm] (-.25,4.76313972081441) -- (-.75,4.76313972081441);
\node[black,,scale=1.1979727824661] at (-.5,5.19615242270663) {odd};
\path (-2.5,4.33012701892219) -- (-2,3.46410161513775) -- (-1.5,4.33012701892219) -- cycle;
\path (-2.5,4.33012701892219) -- (-1.5,4.33012701892219) -- (-2,5.19615242270663) -- cycle;
\path[draw=red,line width=.109618489020846cm] (-1.94,4.33012701892219) -- (-2.25,3.89711431702997);
\path[draw=red,line width=.109618489020846cm] (-1.94,4.33012701892219) -- (-1.75,4.76313972081441);
\node[black,,scale=1.67716288201895] at (-2.25,3.89711431702997) {0};
\node[black,,scale=1.39763409073845] at (-2,4.33012701892219) {\(\searrow\)0};
\path (-2,5.19615242270663) -- (-1.5,4.33012701892219) -- (-1,5.19615242270663) -- cycle;
\path[draw=red,line width=.109618489020846cm] (-1.44,5.19615242270663) -- (-1.75,4.76313972081441);
\node[black,,scale=1.67716288201895] at (-1.75,4.76313972081441) {0};
\node[black,,scale=1.39763409073845] at (-1.5,5.19615242270663) {\(\searrow\)0};
\path (-3,5.19615242270663) -- (-2.5,4.33012701892219) -- (-2,5.19615242270663) -- cycle;
\path[draw=red,line width=.109618489020846cm] (-2.44,5.19615242270663) -- (-2.75,4.76313972081441);
\node[black,,scale=1.67716288201895] at (-2.75,4.76313972081441) {0};
\node[black,,scale=1.39763409073845] at (-2.5,5.19615242270663) {\(\searrow\)0};
\end{scope}
\end{tikzpicture}
\end{center}
Next, there must an odd number of labels on the sides of the remaining
bottom triangles. Because of the content of $\lambda$ and $\mu$, only
labels $k$ can be used; this means that this odd set of labels is the
singleton $k$. There is then a unique way to complete this into a path
labeled $k$; and we can iterate the process, resulting in the unique puzzle
\begin{center}
\begin{tikzpicture}[scale=0.5,every node/.style={scale=0.5},execute at begin picture={\bgroup\tikzset{every path/.style={}}\clip (-3.72,-.637138) rectangle ++(7.44,6.58376);\egroup},x={(1.56597774923292cm,0cm)},y={(0cm,-1.56597907993384cm)},baseline=(current  bounding  box.center),every path/.style={draw=black,fill=none},line join=round]
\begin{scope}[every path/.append style={fill=white,line width=.0313195682916704cm}]
\path (-.5,.866025403784439) -- (0,0) -- (.5,.866025403784439) -- cycle;
\path (-.5,.866025403784439) -- (.5,.866025403784439) -- (0,1.73205080756888) -- cycle;
\node[black,,scale=1.04822680126184] at (0,.866025403784439) {even};
\path (0,1.73205080756888) -- (.5,.866025403784439) -- (1,1.73205080756888) -- cycle;
\path (0,1.73205080756888) -- (1,1.73205080756888) -- (.5,2.59807621135332) -- cycle;
\node[black,,scale=1.04822680126184] at (.5,1.73205080756888) {even};
\path (.5,2.59807621135332) -- (1,1.73205080756888) -- (1.5,2.59807621135332) -- cycle;
\path (.5,2.59807621135332) -- (1.5,2.59807621135332) -- (1,3.46410161513775) -- cycle;
\path[draw=green,line width=.109618489020846cm] (1.25,2.1650635094611) -- (1.06,2.59807621135332);
\path[draw=green,line width=.109618489020846cm] (.75,3.03108891324553) -- (1.06,2.59807621135332);
\node[black,,scale=1.67716288201895] at (1.25,2.1650635094611) {1};
\node[black,,scale=1.39763409073845] at (1,2.59807621135332) {\(\nearrow\)1};
\path (1,3.46410161513775) -- (1.5,2.59807621135332) -- (2,3.46410161513775) -- cycle;
\path (1,3.46410161513775) -- (2,3.46410161513775) -- (1.5,4.33012701892219) -- cycle;
\path[draw=green,line width=.109618489020846cm] (1.75,3.03108891324554) -- (1.56,3.46410161513775);
\path[draw=green,line width=.109618489020846cm] (1.25,3.89711431702997) -- (1.56,3.46410161513775);
\node[black,,scale=1.67716288201895] at (1.75,3.03108891324554) {1};
\node[black,,scale=1.39763409073845] at (1.5,3.46410161513775) {\(\nearrow\)1};
\path (1.5,4.33012701892219) -- (2,3.46410161513775) -- (2.5,4.33012701892219) -- cycle;
\path (1.5,4.33012701892219) -- (2.5,4.33012701892219) -- (2,5.19615242270663) -- cycle;
\path[draw=blue,line width=.109618489020846cm] (2.25,3.89711431702997) -- (2.06,4.33012701892219);
\path[draw=blue,line width=.109618489020846cm] (1.75,4.76313972081441) -- (2.06,4.33012701892219);
\node[black,,scale=1.67716288201895] at (2.25,3.89711431702997) {2};
\node[black,,scale=1.39763409073845] at (2,4.33012701892219) {\(\nearrow\)2};
\path (2,5.19615242270663) -- (2.5,4.33012701892219) -- (3,5.19615242270663) -- cycle;
\path[draw=blue,line width=.109618489020846cm] (2.75,4.76313972081441) -- (2.56,5.19615242270663);
\node[black,,scale=1.67716288201895] at (2.75,4.76313972081441) {2};
\node[black,,scale=1.39763409073845] at (2.5,5.19615242270663) {\(\nearrow\)2};
\path (-1,1.73205080756888) -- (-.5,.866025403784439) -- (0,1.73205080756888) -- cycle;
\path (-1,1.73205080756888) -- (0,1.73205080756888) -- (-.5,2.59807621135332) -- cycle;
\node[black,,scale=1.04822680126184] at (-.5,1.73205080756888) {even};
\path (-.5,2.59807621135332) -- (0,1.73205080756888) -- (.5,2.59807621135332) -- cycle;
\path (-.5,2.59807621135332) -- (.5,2.59807621135332) -- (0,3.46410161513775) -- cycle;
\node[black,,scale=1.04822680126184] at (0,2.59807621135332) {even};
\path (0,3.46410161513775) -- (.5,2.59807621135332) -- (1,3.46410161513775) -- cycle;
\path (0,3.46410161513775) -- (1,3.46410161513775) -- (.5,4.33012701892219) -- cycle;
\path[draw=green,line width=.109618489020846cm] (.75,3.03108891324553) -- (.56,3.46410161513775);
\path[draw=green,line width=.109618489020846cm] (.25,3.89711431702997) -- (.56,3.46410161513775);
\node[black,,scale=1.67716288201895] at (.75,3.03108891324553) {1};
\node[black,,scale=1.39763409073845] at (.5,3.46410161513775) {\(\nearrow\)1};
\path (.5,4.33012701892219) -- (1,3.46410161513775) -- (1.5,4.33012701892219) -- cycle;
\path (.5,4.33012701892219) -- (1.5,4.33012701892219) -- (1,5.19615242270663) -- cycle;
\path[draw=green,line width=.109618489020846cm] (1.25,3.89711431702997) -- (1.06,4.33012701892219);
\path[draw=green,line width=.109618489020846cm] (.75,4.76313972081441) -- (1.06,4.33012701892219);
\node[black,,scale=1.67716288201895] at (1.25,3.89711431702997) {1};
\node[black,,scale=1.39763409073845] at (1,4.33012701892219) {\(\nearrow\)1};
\path (1,5.19615242270663) -- (1.5,4.33012701892219) -- (2,5.19615242270663) -- cycle;
\path[draw=blue,line width=.109618489020846cm] (1.75,4.76313972081441) -- (1.56,5.19615242270663);
\node[black,,scale=1.67716288201895] at (1.75,4.76313972081441) {2};
\node[black,,scale=1.39763409073845] at (1.5,5.19615242270663) {\(\nearrow\)2};
\path (-1.5,2.59807621135332) -- (-1,1.73205080756888) -- (-.5,2.59807621135332) -- cycle;
\path (-1.5,2.59807621135332) -- (-.5,2.59807621135332) -- (-1,3.46410161513775) -- cycle;
\path[draw=green,line width=.109618489020846cm] (-.94,2.59807621135332) -- (-1.25,2.1650635094611);
\path[draw=green,line width=.109618489020846cm] (-.94,2.59807621135332) -- (-.75,3.03108891324553);
\node[black,,scale=1.67716288201895] at (-1.25,2.1650635094611) {1};
\node[black,,scale=1.39763409073845] at (-1,2.59807621135332) {\(\searrow\)1};
\path (-1,3.46410161513775) -- (-.5,2.59807621135332) -- (0,3.46410161513775) -- cycle;
\path (-1,3.46410161513775) -- (0,3.46410161513775) -- (-.5,4.33012701892219) -- cycle;
\path[draw=green,line width=.109618489020846cm] (-.44,3.46410161513775) -- (-.75,3.03108891324553);
\path[draw=green,line width=.109618489020846cm] (-.44,3.46410161513775) -- (-.25,3.89711431702997);
\node[black,,scale=1.67716288201895] at (-.75,3.03108891324553) {1};
\node[black,,scale=1.39763409073845] at (-.5,3.46410161513775) {\(\searrow\)1};
\path (-.5,4.33012701892219) -- (0,3.46410161513775) -- (.5,4.33012701892219) -- cycle;
\path (-.5,4.33012701892219) -- (.5,4.33012701892219) -- (0,5.19615242270663) -- cycle;
\path[draw=green,line width=.109618489020846cm] (.25,3.89711431702997) -- (-.25,3.89711431702997);
\path[draw=green,line width=.109618489020846cm] (-.25,4.76313972081441) -- (.25,4.76313972081441);
\node[black,,scale=1.67716288201895] at (.25,3.89711431702997) {1};
\node[black,,scale=1.67716288201895] at (-.25,3.89711431702997) {1};
\node[black,,scale=1.1979727824661] at (0,4.33012701892219) {odd};
\path (0,5.19615242270663) -- (.5,4.33012701892219) -- (1,5.19615242270663) -- cycle;
\path[draw=green,line width=.109618489020846cm] (.75,4.76313972081441) -- (.25,4.76313972081441);
\node[black,,scale=1.67716288201895] at (.75,4.76313972081441) {1};
\node[black,,scale=1.67716288201895] at (.25,4.76313972081441) {1};
\node[black,,scale=1.1979727824661] at (.5,5.19615242270663) {odd};
\path (-2,3.46410161513775) -- (-1.5,2.59807621135332) -- (-1,3.46410161513775) -- cycle;
\path (-2,3.46410161513775) -- (-1,3.46410161513775) -- (-1.5,4.33012701892219) -- cycle;
\path[draw=green,line width=.109618489020846cm] (-1.44,3.46410161513775) -- (-1.75,3.03108891324554);
\path[draw=green,line width=.109618489020846cm] (-1.44,3.46410161513775) -- (-1.25,3.89711431702997);
\node[black,,scale=1.67716288201895] at (-1.75,3.03108891324554) {1};
\node[black,,scale=1.39763409073845] at (-1.5,3.46410161513775) {\(\searrow\)1};
\path (-1.5,4.33012701892219) -- (-1,3.46410161513775) -- (-.5,4.33012701892219) -- cycle;
\path (-1.5,4.33012701892219) -- (-.5,4.33012701892219) -- (-1,5.19615242270663) -- cycle;
\path[draw=green,line width=.109618489020846cm] (-.94,4.33012701892219) -- (-1.25,3.89711431702997);
\path[draw=green,line width=.109618489020846cm] (-.94,4.33012701892219) -- (-.75,4.76313972081441);
\node[black,,scale=1.67716288201895] at (-1.25,3.89711431702997) {1};
\node[black,,scale=1.39763409073845] at (-1,4.33012701892219) {\(\searrow\)1};
\path (-1,5.19615242270663) -- (-.5,4.33012701892219) -- (0,5.19615242270663) -- cycle;
\path[draw=green,line width=.109618489020846cm] (-.25,4.76313972081441) -- (-.75,4.76313972081441);
\node[black,,scale=1.67716288201895] at (-.25,4.76313972081441) {1};
\node[black,,scale=1.67716288201895] at (-.75,4.76313972081441) {1};
\node[black,,scale=1.1979727824661] at (-.5,5.19615242270663) {odd};
\path (-2.5,4.33012701892219) -- (-2,3.46410161513775) -- (-1.5,4.33012701892219) -- cycle;
\path (-2.5,4.33012701892219) -- (-1.5,4.33012701892219) -- (-2,5.19615242270663) -- cycle;
\path[draw=red,line width=.109618489020846cm] (-1.94,4.33012701892219) -- (-2.25,3.89711431702997);
\path[draw=red,line width=.109618489020846cm] (-1.94,4.33012701892219) -- (-1.75,4.76313972081441);
\node[black,,scale=1.67716288201895] at (-2.25,3.89711431702997) {0};
\node[black,,scale=1.39763409073845] at (-2,4.33012701892219) {\(\searrow\)0};
\path (-2,5.19615242270663) -- (-1.5,4.33012701892219) -- (-1,5.19615242270663) -- cycle;
\path[draw=red,line width=.109618489020846cm] (-1.44,5.19615242270663) -- (-1.75,4.76313972081441);
\node[black,,scale=1.67716288201895] at (-1.75,4.76313972081441) {0};
\node[black,,scale=1.39763409073845] at (-1.5,5.19615242270663) {\(\searrow\)0};
\path (-3,5.19615242270663) -- (-2.5,4.33012701892219) -- (-2,5.19615242270663) -- cycle;
\path[draw=red,line width=.109618489020846cm] (-2.44,5.19615242270663) -- (-2.75,4.76313972081441);
\node[black,,scale=1.67716288201895] at (-2.75,4.76313972081441) {0};
\node[black,,scale=1.39763409073845] at (-2.5,5.19615242270663) {\(\searrow\)0};
\end{scope}
\end{tikzpicture}
\end{center}
\end{proof}
We now fix the normalizing constant $c(z)$ in \eqref{eq:defRD} by requiring that the puzzle in the Proposition above have a fugacity of $1$.
We have the following
\begin{lem}
Define
\[
  c(z) :=
  \begin{cases}
    \displaystyle
    \prod_{i=1}^{(d+1)/2} (q^{2i-2}z-q^{-2i+2})&d\text{ odd}
    \\
    \displaystyle
    \ \ \prod_{i=1}^{d/2}\ \ (q^{2i-1}z-q^{-2i+1})&d\text{ even}
  \end{cases}
\]
as well as $c_U=c_D=1$.
Then the fugacity of every rhombus and triangle contributing to the puzzle of Proposition~\ref{prop:trivpuzzled} is $1$.
\end{lem}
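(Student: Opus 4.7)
The plan is a direct verification: compute the fugacity of each puzzle piece appearing in the trivial puzzle of Proposition~\ref{prop:trivpuzzled}, and check that the $c(z)$, $c_U$, $c_D$ given in the statement are precisely what is needed to make each of these fugacities equal to $1$.

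First I would enumerate the pieces that actually occur. From the description in Proposition~\ref{prop:trivpuzzled} (and the illustration given there), the diagonal labels are constant along each NW- and NE-diagonal, so the only rhombi that arise have matching opposite sides, of the form $\rh{X}{X'}{X}{X'}$. The triangles are of three kinds: down-triangles $\downtri{iX}{\da i}{X}$ and up-triangles $\uptri{X}{\ua j}{Xj}$ arising where a single letter is added or removed as two diagonals meet, and the $even$/$odd$ triangles $\uptri{\varnothing}{even}{\varnothing}$, $\uptri{i}{odd}{i}$ in the region near the blank label.

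Second I would compute the triangle fugacities. Via the bijection \eqref{eq:mapping} one converts each such triangle into an explicit matrix element of $U$ or $D$ given by \eqref{eq:defUD}, \eqref{eq:defDD}. The exponent $M$ is read off directly from the combinatorics of the set $X$, giving a monomial $\pm q^?$. The sign and power of $q$ will be absorbed by the change of basis to be performed in \S\ref{ssec:almostsepdescq0} (the analogue of \eqref{eq:sepdescconj}). Therefore setting $c_U = c_D = 1$ is consistent with fugacity $1$ on every triangle of the trivial puzzle, once a single compatible rescaling of the weight vectors is chosen --- a finite check involving only the triangles enumerated above.

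Third, and more involved, I would compute the label-preserving matrix elements of $\check R_{1,2}(z)$ on the basis vectors $v = e_X \otimes e_{X'}$ that label the rhombi of the trivial puzzle. Using the decomposition \eqref{eq:defRD}, this matrix element equals $c(z)^{-1} \sum_{j} \rho_j(z)\,\langle v^*, P_j\, v\rangle$. I would plug in the explicit expressions for the projectors $P_j$ from \cite[Prop.~5.1]{Okado-BD} and evaluate the inner products in each of the three families: rhombi along the NW diagonals, rhombi along the NE diagonals, and the boundary rhombi that carry the blank label. Solving for $c(z)$ so that all these fugacities equal $1$ should yield exactly the product displayed in the lemma, separately for $d$ odd and $d$ even.

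The main obstacle is this last step. The sum over $j$ has $\lfloor (d+2)/2\rfloor$ terms, each itself a product of $\sim d/2$ linear factors in $z$, and one must check that it telescopes uniformly (i.e.\ with the same $c(z)$) across every rhombus in the trivial puzzle. The simplification I expect is that the rhombi occurring in the trivial puzzle correspond to vectors $v$ lying on the type-$A$ face constructed in \S\ref{ssec:almostsepdescdata}: with respect to the appropriate Levi subalgebra such a $v$ is of extremal weight, so at most one projector $P_j$ --- namely $P_{d+1}$, whose image spans $V_3$ --- contributes nontrivially, and the sum collapses to the single term $\rho_{d+1}(z)$, which differs from the displayed $c(z)$ only by an overall monomial in $q$ that is absorbed by the same rescaling of weight vectors used in the second step.
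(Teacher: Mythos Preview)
Your plan goes astray at both of the two ``obstacles'' you identify.

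For the triangles: the lemma asserts that the fugacities are literally equal to $1$ \emph{before} any change of basis --- the rescaling of \S\ref{ssec:almostsepdescq0} comes later and serves a different purpose (making the $q\to0$ limit finite). If you actually evaluate \eqref{eq:defUD} on the three types of up-triangle in the bottom row of the trivial puzzle --- $\uptri{i}{\da i}{}$ with $i<k$, $\uptri{k}{odd}{k}$, and $\uptri{}{\ua j}{j}$ with $j>k$ --- you find (tracking the ordering through \eqref{eq:mapping}) that the exponent $M$ vanishes in each case, so the entries are exactly $1$ once $c_U=1$; there is no residual $\pm q^?$ to absorb.

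For the rhombi, the key simplification you hope for is false: the vector $v = x_\_\otimes x_\_$ (and likewise the other rhombus labels) is \emph{not} annihilated by the projectors $P_j$ with $j<d+1$. In fact the coefficients $a_{j,\ell} := \langle x_\_\otimes x_\_, P_j\, x_\_\otimes x_\_\rangle$ satisfy a recursion from \cite[Eq.~(5.13)]{Okado-BD} and are all nonzero for $j\equiv \ell\pmod 2$; one finds an explicit closed form involving $q$-integers. Consequently $\rho_{d+1}(z)$ is not proportional to $c(z)$ (their zero sets in $z$ are different), and your proposed collapse cannot occur. The actual argument is to write $c(z)-\sum_j \rho_j(z)\,a_{j,d+1}$ as a polynomial in $z$ of degree at most $\lceil d/2\rceil$ and check that it vanishes at the $\lceil d/2\rceil+1$ points $z=q^{-2d+4i}$, $i=0,\dots,\lceil d/2\rceil$, forcing it to be identically zero.
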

\begin{proof}
Let us pick for example the blank rhombus (the same reasoning applies to all other rhombi in the puzzle, resulting in the same fugacity).
Denote $a_{j,d+1}=\left<x_\_\otimes x_\_, P_j x_\_\otimes x_\_\right>$ where we emphasize the dependence on $d$.
According to \cite[Eq.~(5.13)]{Okado-BD}, the following recurrence relation holds:
\begin{gather*}
a_{j,\ell}=f(-j+1)a_{j-1,\ell-1}+f(j+1)a_{j+1,\ell-1}
\\
a_{j,\ell}=0\qquad j<0\text{ or }j>\ell
\\
f(j)=\begin{cases}
1&j=0\\
(-1)^{j+1}/(q^j+q^{-j})&j>0
\\
(-1)^{j}/(q^j+q^{-j})&j<0
\end{cases}
\end{gather*}
This recurrence can be easily solved; writing $[m]=q^{m}-q^{-m}$, one has
\[
a_{j,\ell}
=(-1)^{\lfloor j/2\rfloor}
\frac{\prod_{i=1}^{\lceil \ell/2\rceil}[2i-1]}
{\prod_{i=1}^{(\ell-j)/2}[2i]\prod_{i=\lceil (\ell+1)/2\rceil}^{(\ell+j)/2}[2i]}
\begin{cases}
\frac{[2j]}{[j]}&j>0
\\
1&j=0
\end{cases}
\qquad
j\equiv \ell\pmod 2
\]
Finally, consider
\[
  c(z)-\sum_{\substack{j=0\\j\equiv d+1\\(\text{mod }2)}}^{d+1} \rho_j(z) \ a_{j,d+1}
\]
where $c(z)$ is given as in the Lemma. This is a polynomial of degree at most $\lceil d/2\rceil$ in $z$, and its evaluation
at $z=q^{-2d+4i}$, $i=0,\ldots,\lceil d/2\rceil$, is easily seen to be zero. Therefore it is zero and
\[
\left<x_\_\otimes x_\_, \ \check R_{1,2}(z) x_\_\otimes x_\_\right>=1
\]

The analysis is simpler for the bottom row of up-pointing triangles. There are three types of triangles, which we convert
to the notation of \eqref{eq:defUD}:
\begin{itemize}
\item \uptri{i}{\da i}{} where $i<k$, with fugacity $\left<-x_i,U(\frac{1}{2}\vec 1-x_i-x_{\_})\otimes(-\frac{1}{2}\vec1+x_{\_})\right>$.
\item \uptri{k}{odd}{k} with fugacity $\left<-x_{d+2},U(\frac{1}{2}\vec 1-x_{k}-x_{\_})\otimes(-\frac{1}{2}\vec1+x_{k})\right>$.
\item \uptri{}{\ua j}{j} where $j>k$, with fugacity $\left<x_{j},U(\frac{1}{2}\vec 1)\otimes(-\frac{1}{2}\vec1+x_{j})\right>$.
\end{itemize}
Paying attention to the ordering of the labels, one checks in each case from \eqref{eq:defUD} that the matrix entry is $1$ provided
$c_U=1$. Imposing $\check R_{1,2}(z=q^{-2d})=DU$ also fixes $c_D=1$.
\end{proof}

This concludes the proof of all the required properties of \S\ref{sec:setup}, which means Theorem~\ref{thm:motivic} provides us
with a puzzle formula for the product of the pullbacks of two (equivariant) motivic Segre classes 
in $\mathcal F_1$ and $\mathcal F_2$ (where the flag dimensions satisfy the ``almost separated descent'' condition, i.e., the overlap
of dimensions of $\mathcal F_1$ and $\mathcal F_2$ is at most $2$) to their common refinement $\mathcal F_3$.
We have not provided the explicit fugacities of the rhombi (the entries of $\check R_{1,2}$),
though they can be extracted from \cite[\S 5]{Okado-BD}. In what follows,
we only ever consider the nonequivariant case of the theorem, which requires the knowledge of the entries of $U$ and $D$ only,
given in \eqref{eq:defUD} and \eqref{eq:defDD} respectively.

Such nonequivariant generic puzzles can be described as follows: they are colored lattice paths going East, NorthEast or SouthEast,
with only two constraints: paths of the same color cannot touch, and in a given triangle at most one path can deviate from the
horizontal.
\junk{reformulate nonequivariant puzzles as paths going E, NE, SE,
such that in a given triangle at most one path can deviate from horizontal}

Furthermore, note that in the limit to ordinary cohomology (given
by setting $q$ to $-1$), the fugacities of all triangles become $1$, so that $c^{\pi\rho}_\sigma$ is simply the number of puzzles
with sides $\pi,\rho,\sigma$.

\begin{ex}Consider $\pi=15342$ and $\rho=21435$, with $O(\pi,\rho)=\{2,3\}$:
\[
\begin{tikzpicture}[xscale=1.05,yscale=.5]
\node at (-1,4) {$\pi$}; \foreach \s [count=\x] in {1,5,3,4,2} \node at (\x,4) {$\s$};
\node at (-1,3) {$\omega_A$}; \foreach \s [count=\x] in {\_,\_,2,3,4} \node at (\x,3) {$\vphantom{0}\s$};
\node at (-1,2) {$\omega_C$}; \foreach \s [count=\x] in {\llap{$\da$} 0,\llap{$\da$} 1,odd,\llap{$\ua$} 3,\llap{$\ua$} 4} \node at (\x,2) {$\s$};
\node at (-1,1) {$\omega_B$}; \foreach \s [count=\x] in {0,1,2,\_,\_} \node at (\x,1) {$\vphantom{0}\s$};
\node at (-1,0) {$\rho$}; \foreach \s [count=\x] in {2,1,4,3,5} \node at (\x,0) {$\s$};
\draw (3.5,3.5) -- (3.5,-0.5);
\draw (4.5,4.5) -- (4.5,1.5);
\draw (1.5,2.5) -- (1.5,-0.5);
\draw (2.5,4.5) -- (2.5,0.5);
\end{tikzpicture}
\]
Let us also choose $\sigma=13254$, so the corresponding strings are
$\lambda=10\_2\_$, $\mu=\_423\_$, $\overleftarrow\nu=\ua\!3\ua\!4\da\!1\,odd\da\!0$;
there are 3 nonequivariant generic puzzles:
\begin{center}
\tikzset{every picture/.style={scale=0.5,every node/.style={scale=0.5}}}
\input ex5.tex
\end{center}
One can check that the triple intersection of the rotated preimage of the Schubert cell $X^\pi_\circ$ in $Gr(1,2,3;5)$ (resp.\ $X^\rho_\circ$ in $Gr(2,3,4;5)$)
and of the Schubert cell $X^\sigma_\circ$ in the full flag variety of $\CC^5$ is a $\mathbb P^1$ minus 5 points, which has Euler characteristic $-3$,
as predicted by Theorem~\ref{thm:euler}.
\end{ex}

\subsection{The $B$-matrix}\label{ssec:almostspecdescB}
Let $B$ be the skew-symmetric form with
\begin{align*}
B(x_i,x_j)&=\text{sign}(i-j)
&
B(x_i,x_\_)&=0
\\
B(x_i,y_1)&=\frac{d+1}{2}-i
&
B(x_\_,y_1)&=\frac{1}{2}
\\
B(x_i,y_2)&=i-\frac{d-1}{2}
&
B(x_\_,y_2)&=-\frac{1}{2}
\\
B(y_1,y_2)&=-\frac{d}{2}
\end{align*}
where $i,j=0,\ldots,d$.
\rem[gray]{note that $B(y_1,y_2)$ is just an overall scaling of the
  puzzle fugacities.  it doesn't affect the type $A$ inversion
  calculation, but is needed in making sure $H$-triangles have
  inversion charge zero (among which the normalization condition of
  Prop (c)). same remark would apply in sep desc BTW}

Once again, we need to check Lemma~\ref{lem:twist}. To help with the
calculation, we first calculate
\[
\begin{split}
B(\vec 1/2+y_1,x_i)&=-1/2
\\
B(-\vec 1/2+y_2,x_i)&=-1/2
\end{split}
\qquad  i=0,\ldots,d
\]

We then compute $B(wt(e_{a,i}),wt(e_{a,j}))$ case by case:
\begin{itemize}
\item $a=1$: the weights are $\vec 1/2-(x_i+x_\_)+y_1$ for $i=0<\cdots<k$ and $\vec 1/2+y_1$ for $i=\_$ ($>$all):
\begin{align*}
B(\vec 1/2-(x_i+x_\_)+y_1,\vec 1/2-(x_j+x_\_)+y_1)&=B(\vec 1/2-x_\_+y_1,x_j)+B(x_i,\vec 1/2-x_\_+y_1)+B(x_i,x_j)
\\
&=\text{sign}(i-j)
\\
B(\vec 1/2-(x_i+x_\_)+y_1,\vec 1/2+y_1)&=1/2+1/2=1
\end{align*}
\item $a=2$: the weights are $-\vec 1/2+x_i+y_2$ for $i=\_<k<\cdots<d$.
\begin{align*}
B(-\vec 1/2+x_\_+y_2,-\vec 1/2+x_i+y_2)&=-1/2-1/2=-1
\\
B(-\vec 1/2+x_i+y_2,-\vec 1/2+x_j+y_2)&=B(\vec 1/2+y_2,x_j)+B(x_i,\vec 1/2+y_2)+B(x_i,x_j) & i,j\ne\_
\\
&=\text{sign}(i-j)
\end{align*}
\item $a=3$: the weights are $-x_i+y_1+y_2$, $i=0<\cdots<k-1$, $-x_\_+y_1+y_2$ for $i=odd$ (which for ordering purposes is like $k$), 
and $+x_i+y_1+y_2$ for $i=k+1<\cdots<d$.
\begin{align*}
B(-x_i+y_1+y_2,-x_j+y_1+y_2)&=\text{sign}(i-j)
\\
B(-x_i+y_1+y_2,-x_\_+y_1+y_2)&=-1
\\
B(x_i+y_1+y_2,-x_\_+y_1+y_2)&=+1
\\
B(-x_i+y_1+y_2,x_j+y_1+y_2)&=1-2=-1    & i<k<j
\\
B(x_i+y_1+y_2,x_j+y_1+y_2)&=\text{sign}(i-j)
\end{align*}
\end{itemize}

\subsection{The limit $q\to 0$}\label{ssec:almostsepdescq0}
\rem[gray]{see {\tt Rmatrix-Dn.m2}, {\tt Dpuzzles2.m2}.}
We are now ready to perform the limit $q\to0$. We start from the expressions \eqref{eq:defUD} and \eqref{eq:defDD}
for $U$ and $D$. Define for convenience
\[
r(X)=\sum_{j\in X}(d+1-w(j))
\]
where $w$ was defined in \eqref{eq:mapping}. More explicitly
\begin{align*}
r(X)&=r_<(X)+r_>(X)
&\qquad\qquad
r_<(X)&=\sum_{a\in X,\, a<k} (d-a)
&\qquad\qquad
r_>(X)&=\sum_{a\in X,\, a> k}(a-k)
\end{align*}
This allows us to rephrase the matrix entries of $U$ and $D$ in terms
of the ``unintuitive'' indexing of \S\ref{ssec:almostsepdescdata}.

We focus on $U$ first.
Since all entries are powers of $-q$, we only write those powers below:
\begin{align*}
\log_{-q} U^{iX,X}_{\da i} = \log_{-q}  \uptri {iX} {\da i} {X} &=r(X)+\#\{a\in X: w(a)>w(i)\}
\\
  &= r(X)+\begin{cases} \#\{a\in X:a>i\}& i<k \\ \#\{a\in X: k\le a<i\} & i\ge k 
  \end{cases}
  \\
\log_{-q} U^{X,Xj}_{\ua j} = \log_{-q}  \uptri {X} {\ua j} {Xj} &= r(X)+\#\{a\in X: w(a)>w(j)\}
\\
  &= r(X)+\begin{cases} \#\{a\in X:a>j\}& j<k \\ \#\{a\in X: k\le a<j\} & j\ge k 
  \end{cases}
\\
\log_{-q} U^{X,X}_{even}=\log_{-q}  \uptri {X} {even} {X} &= r(X)
\\
\log_{-q} U^{X,X}_{odd}=\log_{-q}  \uptri {X} {odd} {X} &= r(X)
\end{align*}

Next we are supposed to apply the twist, and conjugate the matrices. For reasons which will become clear below, it is more
convenient to apply those two (commuting) operations in the reverse order.
Introduce two more notations:
\begin{align*}
  s_<(X)&:=\#\{a\in X,\, a<k\}
  \\
  s_\ge(X)&:=\#\{a\in X,\, a> k\}
\end{align*}
The change of basis is then given by:
\begin{align}\notag
&\text{in $V_1$:}\quad  e'_{1,X}=(-q)^{\lfloor s_<(X)^2/4\rfloor +\lceil s_<(X)s_\ge(X)/2\rceil - \lfloor (s_\ge(X)-1)^2/4\rfloor } e_{1,X}
\\\label{eq:almostsepdescconj}
&\text{in $V_2$:}\quad e'_{2,X}=(-q)^{\lfloor (s_<(X)-1)^2/4\rfloor+\lceil (s_<(X)-1)s_\ge(X)/2\rceil-\lfloor (s_\ge(X)-1)^2/4\rfloor} e_{2,X}
\\\notag
&\text{in $V_3$:}\quad e'_{3,\ua i}=(-q)^{r_<(i)} e_{3,\ua i}\text{ and } e'_{3,\da i}=(-q)^{r_>(i)} e_{3,\da i}
\end{align}
Note that none of the basis vectors in $V_a^A$ are affected by such a transformation.

It is a tedious but elementary exercise to check that after this conjugation, the powers of $-q$ look like
\begin{align*}
\log_{-q} U'{}^{iX,X}_{\da i} = \log_{-q}  \uptri {iX} {\da i} {X} &=
\sum_{a\in X:\, a<i} (-1)^{[a<k]}
\\
&  = \begin{cases} -\#\{a\in X: a<i\} & i<k \\ -\#\{a\in X: a<k\} + \#\{a\in X: k\le a<i\} & i\ge k
  \end{cases}
  \\
\log_{-q} U'{}^{X,Xj}_{\ua j} = \log_{-q}  \uptri {X} {\ua j} {Xj} &= \# \{a\in X:\, a>j\} \ (-1)^{[j\ge k]}
\\
\log_{-q} U'{}^{X,X}_{even}=\log_{-q}  \uptri {X} {even} {X} &= - \#X / 2
\\
\log_{-q} U'{}^{X,X}_{odd}=\log_{-q}  \uptri {X} {odd} {X} &= - (\#X-1)/2
\end{align*}
where $[\text{true}] := 1$, $[\text{false}] := 0$.

Finally, we apply the twist by computing the inversion charges of triangles; one has from \S\ref{ssec:almostspecdescB}
\[
B(e_{2,Y},e_{1,X})=\frac{1}{2}\sum_{x\in X,y\in Y} \text{sign}(x-y) + \frac{1}{8}((-1)^{\# X}+(-1)^{\# Y}) + \frac{1}{4}(\# X+\# Y-1)
\]
so that
\begin{align*}
\inv(\uptri {X} {\sss parity(\# X)} {X}) &= \frac{1}{4} (-1)^{\#X} +\frac{1}{2}\# X -\frac{1}{4} = \lfloor \# X/2 \rfloor
\\
\inv(\uptri {iX} {\da i} {X}) &= \frac{1}{2}\sum_{y\in X} sign(i-y) + \frac{1}{2} \#X = \# \{ y\in X : y<i \}
\\
\inv(\uptri {X} {\ua j} {Xj}) &= \frac{1}{2}\sum_{x\in X} sign(x-j) + \frac{1}{2} \#X = \# \{ x\in X : x>j \}
\end{align*}
This matches with what was announced in \eqref{eq:inv}.

After twisting, the entries of $U$ will acquire an extra power of $q$, which we write as $(-1)\times(-q)$,
and set aside the $(-1)^{\inv}$, resulting in:
$$
\begin{aligned}
\tilde U'{}^{iX,X}_{\da i} = \uptri {iX} {\da i} {X} &=(-1)^\inv (-q)^{2\# \{a\in X: k\le a<i\}}
\\
\tilde U'{}^{X,Xj}_{\ua j} = \uptri {X} {\ua j} {Xj} &= (-1)^\inv (-q)^{2\# \{a\in X: a>j\} [j<k]}
\end{aligned}
\qquad\qquad\qquad
\begin{aligned}
\tilde U'{}^{X,X}_{even}=\uptri {X} {even} {X} &= (-1)^\inv
\\
\tilde U'{}^{X,X}_{odd}=\uptri {X} {odd} {X} &= (-1)^\inv
\end{aligned}
$$
which clearly leads at $q\to0$ to the rule as stated in Theorem~\ref{thm:almostsepdesc}.

We must perform the same calculation for $D$. Comparing the entries \eqref{eq:defDD} of $D$ to those \eqref{eq:defUD} of $U$,
we note that the powers of $-q$ are opposite. Furthermore, changes of basis affect $U$ and $D$ in opposite ways, so the same fact
holds for the modified entries $U'$ and $D'$. On the other hand, the twist of a down pointing triangle
is the same as that of its 180 degree rotated/arrow reverted version, and it affects $U$ and $D$ identically; so we only have
to redo the last step of the computation for $D$, and we obtain
$$
\begin{aligned}
\tilde D'{}_{X,iX}^{\da i} =   \downtri {iX} {\da i} {X} &=(-1)^\inv (-q)^{2\# \{a\in X: a<\min(i,k)\}}
\\
\tilde D'{}_{Xj,X}^{\ua j} = \downtri {X} {\ua j} {Xj} &= (-1)^\inv (-q)^{2\# \{a\in X: a>j\} [j\ge k]}
\end{aligned}
\qquad\quad
\begin{aligned}
\tilde D'{}_{X,X}^{even}=\downtri {X} {even} {X} &= (-1)^\inv (-q)^{\# X}
\\
\tilde D'{}_{X,X}^{odd}=\downtri {X} {odd} {X} &= (-1)^\inv (-q)^{\# X-1}
\end{aligned}
$$
Again, we recover at $q\to0$ the $K$-pieces of Theorem~\ref{thm:almostsepdesc}.

\junk{AK: I don't really know this story well. Can we include something
  that says, if the tensor product has only two components then here's
  how to achieve equivariant? Which wouldn't prove that $>2$ means
  no chance, but, would at least say why we're not trying to do it for
  $3$-step or almost-separated descent. PZJ: I don't really know of such a statement, but I added a remark at the end of 4.2,
  feel free to modify}

\section{Comparison of puzzle rules}
%

\subsection{Separated-descent vs. Grassmannian puzzles}
Grassmannian puzzles \cite{KT} are based on the algebra $\lie{a}_2$
(this fact was known as early as \cite{artic46}, though it was only
properly explained in \cite{artic71}); separated descent puzzles are
based on $\lie{a}_{d+1}$. Furthermore, the separated descent condition
$over(\pi,\rho)\le 1$ is implied by the more restrictive Grassmannian
condition $desc(\pi,\rho)\le 1$.  Therefore one expects a relation
between the two rules.

Indeed, there is a simple bijection that converts Knutson--Tao puzzles to separated descent puzzles (with $k=0$, $d=1$): replace edge labels as follows
\[
\begin{array}{lccccccccccc}
\text{Grassmannian:}
&
\tikz[baseline=0.3cm]{\draw[thick] (0,0) -- node{$0$} (60:1);}
&
\tikz[baseline=0.3cm]{\draw[thick] (0,0) -- node{$1$} (60:1);} 
&
\tikz[baseline=0.3cm]{\draw[thick] (0,0) -- node{$10$} (60:1);}
&
&
\tikz[baseline=0.3cm]{\draw[thick] (0,0) -- node{$0$} (120:1);}
&
\tikz[baseline=0.3cm]{\draw[thick] (0,0) -- node{$1$} (120:1);}
&
\tikz[baseline=0.3cm]{\draw[thick] (0,0) -- node{$10$} (120:1);}
&
&
\tikz[baseline=-1mm]{\draw[thick] (0,0) -- node{$0$} (0:1);}
&
\tikz[baseline=-1mm]{\draw[thick] (0,0) -- node{$1$} (0:1);}
&
\tikz[baseline=-1mm]{\draw[thick] (0,0) -- node{$10$} (0:1);}
\\
\text{Separated descent:}
&
\tikz[baseline=0.3cm]{\draw[thick] (0,0) -- node{} (60:1);}
&
\tikz[baseline=0.3cm]{\draw[thick] (0,0) -- node{$1$} (60:1);}
&
\tikz[baseline=0.3cm]{\draw[thick] (0,0) -- node{$0$} (60:1);}
&
&
\tikz[baseline=0.3cm]{\draw[thick] (0,0) -- node{$0$} (120:1);}
&
\tikz[baseline=0.3cm]{\draw[thick] (0,0) -- node{} (120:1);}
&
\tikz[baseline=0.3cm]{\draw[thick] (0,0) -- node{$1$} (120:1);}
&
&
\tikz[baseline=-1mm]{\draw[thick] (0,0) -- node{$0$} (0:1);}
&
\tikz[baseline=-1mm]{\draw[thick] (0,0) -- node{$1$} (0:1);}
&
\tikz[baseline=-1mm]{\draw[thick] (0,0) -- node{$01$} (0:1);}
\end{array}
\]

The triangles match
\[
\uptri{0}{0}{0}\mapsto \uptri{}{0}{0}
\quad
\uptri{1}{1}{1}\mapsto \uptri{1}{1}{}
\quad
\uptri{1}{10}{0}\mapsto \uptri{1}{01}{0}
\quad
\uptri{10}{0}{1}\mapsto \uptri{0}{0}{}
\quad
\uptri{0}{1}{10}\mapsto \uptri{}{1}{1}
\]
and similarly for down-pointing triangles;
the $K$-triangle \cite{Vakil} becomes
\[
\uptri{10}{10}{10}\mapsto \uptri{0}{01}{1}
\]
while the generic puzzle rule also allows for the corresponding down-pointing triangle;
and the equivariant rhombus becomes
\[
\rh{1}{0}{1}{0}
\mapsto
\rh{}{}{}{}
\]
while the generic puzzle rule as stated in \cite[\S4.1]{artic80} also allows
\[
\rh{0}{10}{0}{10}
\mapsto
\rh{0}{0}{0}{0}
\quad
\rh{10}{1}{10}{1}
\mapsto
\rh{1}{1}{1}{1}
\]
One can check that all fugacities match the various versions of the rule.

The original $0,10,1$ labeling makes evident the $Z_3$-symmetry of the
rule computing the coefficients $\int_{Gr(k;n)} S^\lambda S^\mu S^\nu$
(although the question itself enjoys $S_3$-symmetry). 
The new $0,1,2,01,02,12$ labeling is more natural in the sense that it
displays the conservation laws (the continuity of the colored pipes)
more explicitly.

\junk{comment that these are not the same lines as in \cite{artic46}}

\subsection{Almost separated descent vs. 2-step puzzles}
\label{ssec:almostvs2}
In a similar vein, 2-step puzzles \cite{BKPT} are based on $\lie d_4$ \cite{artic71}, almost separated descent puzzles
are based on $\lie d_{d+2}$, and 
the almost separated descent condition $over(\pi,\rho)\le 2$ is implied by the 2-step condition $desc(\pi,\rho)\le 2$.

We can once again convert 2-step puzzle labels (with $k=1$, $d=2$) using the following dictionary:
\[
\begin{array}{lccccccccc}
\text{2-step:}
&
\tikz[baseline=0.3cm]{\draw[thick] (0,0) -- node{$0$} (60:1);}
&
\tikz[baseline=0.3cm]{\draw[thick] (0,0) -- node{$1$} (60:1);} 
&
\tikz[baseline=0.3cm]{\draw[thick] (0,0) -- node{$2$} (60:1);}
&
\tikz[baseline=0.3cm]{\draw[thick] (0,0) -- node{$10$} (60:1);}
&
\tikz[baseline=0.3cm]{\draw[thick] (0,0) -- node{$20$} (60:1);} 
&
\tikz[baseline=0.3cm]{\draw[thick] (0,0) -- node{$21$} (60:1);}
&
\tikz[baseline=0.3cm]{\draw[thick] (0,0) -- node{$2(10)$} (60:1);}
&
\tikz[baseline=0.3cm]{\draw[thick] (0,0) -- node{$(21)0$} (60:1);} 
\\
\text{Almost separated descent:}
&
\tikz[baseline=0.3cm]{\draw[thick] (0,0) -- node{$0$} (60:1);}
&
\tikz[baseline=0.3cm]{\draw[thick] (0,0) -- node{$1$} (60:1);}
&
\tikz[baseline=0.3cm]{\draw[thick] (0,0) -- node{} (60:1);}
&
\tikz[baseline=0.3cm]{\draw[thick] (0,0) -- node{$01$} (60:1);}
&
\tikz[baseline=0.3cm]{\draw[thick] (0,0) -- node{$02$} (60:1);}
&
\tikz[baseline=0.3cm]{\draw[thick] (0,0) -- node{$2$} (60:1);}
&
\tikz[baseline=0.3cm]{\draw[thick] (0,0) -- node{$12$} (60:1);}
&
\tikz[baseline=0.3cm]{\draw[thick] (0,0) -- node{$012$} (60:1);}
\\[4mm]
\text{2-step:}
&
\tikz[baseline=0.3cm]{\draw[thick] (0,0) -- node{$0$} (120:1);}
&
\tikz[baseline=0.3cm]{\draw[thick] (0,0) -- node{$1$} (120:1);} 
&
\tikz[baseline=0.3cm]{\draw[thick] (0,0) -- node{$2$} (120:1);}
&
\tikz[baseline=0.3cm]{\draw[thick] (0,0) -- node{$10$} (120:1);}
&
\tikz[baseline=0.3cm]{\draw[thick] (0,0) -- node{$20$} (120:1);} 
&
\tikz[baseline=0.3cm]{\draw[thick] (0,0) -- node{$21$} (120:1);}
&
\tikz[baseline=0.3cm]{\draw[thick] (0,0) -- node{$2(10)$} (120:1);}
&
\tikz[baseline=0.3cm]{\draw[thick] (0,0) -- node{$(21)0$} (120:1);} 
\\
\text{Almost separated descent:}
&
\tikz[baseline=0.3cm]{\draw[thick] (0,0) -- node{} (120:1);}
&
\tikz[baseline=0.3cm]{\draw[thick] (0,0) -- node{$1$} (120:1);}
&
\tikz[baseline=0.3cm]{\draw[thick] (0,0) -- node{$2$} (120:1);}
&
\tikz[baseline=0.3cm]{\draw[thick] (0,0) -- node{$0$} (120:1);}
&
\tikz[baseline=0.3cm]{\draw[thick] (0,0) -- node{$02$} (120:1);}
&
\tikz[baseline=0.3cm]{\draw[thick] (0,0) -- node{$12$} (120:1);}
&
\tikz[baseline=0.3cm]{\draw[thick] (0,0) -- node{$012$} (120:1);}
&
\tikz[baseline=0.3cm]{\draw[thick] (0,0) -- node{$01$} (120:1);}
\\[4mm]
\text{2-step:}
&
\tikz[baseline=-1mm]{\draw[thick] (0,0) -- node{$0$} (0:1);}
&
\tikz[baseline=-1mm]{\draw[thick] (0,0) -- node{$1$} (0:1);} 
&
\tikz[baseline=-1mm]{\draw[thick] (0,0) -- node{$2$} (0:1);}
&
\tikz[baseline=-1mm]{\draw[thick] (0,0) -- node{$10$} (0:1);}
&
\tikz[baseline=-1mm]{\draw[thick] (0,0) -- node{$20$} (0:1);} 
&
\tikz[baseline=-1mm]{\draw[thick] (0,0) -- node{$21$} (0:1);}
&
\tikz[baseline=-1mm]{\draw[thick] (0,0) -- node{$2(10)$} (0:1);}
&
\tikz[baseline=-1mm]{\draw[thick] (0,0) -- node{$(21)0$} (0:1);} 
\\
\text{Almost separated descent:}
&
\tikz[baseline=-1mm]{\draw[thick] (0,0) -- node{$\da0$} (0:1);}
&
\tikz[baseline=-1mm]{\draw[thick] (0,0) -- node{$odd$} (0:1);}
&
\tikz[baseline=-1mm]{\draw[thick] (0,0) -- node{$\ua2$} (0:1);}
&
\tikz[baseline=-1mm]{\draw[thick] (0,0) -- node{$\da1$} (0:1);}
&
\tikz[baseline=-1mm]{\draw[thick] (0,0) -- node{$even$} (0:1);}
&
\tikz[baseline=-1mm]{\draw[thick] (0,0) -- node{$\ua1$} (0:1);}
&
\tikz[baseline=-1mm]{\draw[thick] (0,0) -- node{$\ua0$} (0:1);}
&
\tikz[baseline=-1mm]{\draw[thick] (0,0) -- node{$\da2$} (0:1);}
\end{array}
\]

\begin{ex}The example $c^{0102,0201}_{0210}$ of \cite[\S1.2]{artic71} becomes
\begin{center}
\tikzset{every picture/.style={scale=0.5,every node/.style={scale=0.5}}}
\begin{tikzpicture}[execute at begin picture={\bgroup\tikzset{every path/.style={}}\clip (-2.48,-.429292) rectangle ++(4.96,4.43602);\egroup},x={(1.55865151133662cm,0cm)},y={(0cm,-1.55865187137441cm)},baseline=(current  bounding  box.center),every path/.style={draw=black,fill=none},line join=round]
\begin{scope}[every path/.append style={fill=white,line width=.0311730338271105cm}]
\path (-.5,.866025403784439) -- (0,0) -- (.5,.866025403784439) -- cycle;
\path (-.5,.866025403784439) -- (.5,.866025403784439) -- (0,1.73205080756888) -- cycle;
\node[black,,scale=1.04332247590111] at (0,.866025403784439) {even};
\path (0,1.73205080756888) -- (.5,.866025403784439) -- (1,1.73205080756888) -- cycle;
\path (0,1.73205080756888) -- (1,1.73205080756888) -- (.5,2.59807621135332) -- cycle;
\path[draw=red,line width=.109105618394887cm] (.27,2.19970452561247) -- (.75,2.1650635094611);
\path[draw=blue,line width=.109105618394887cm] (.75,1.29903810567666) -- (.56,1.73205080756888);
\path[draw=blue,line width=.109105618394887cm] (.23,2.13042249330972) -- (.56,1.73205080756888);
\node[black,,scale=1.66931596144177] at (.75,1.29903810567666) {2};
\node[black,,scale=1.39109499795053] at (.5,1.73205080756888) {\(\nearrow\)2};
\path (.5,2.59807621135332) -- (1,1.73205080756888) -- (1.5,2.59807621135332) -- cycle;
\path (.5,2.59807621135332) -- (1.5,2.59807621135332) -- (1,3.46410161513775) -- cycle;
\path[draw=red,line width=.109105618394887cm] (1.06,2.59807621135332) -- (.75,2.1650635094611);
\path[draw=red,line width=.109105618394887cm] (1.06,2.59807621135332) -- (1.23,3.06572992939691);
\path[draw=green,line width=.109105618394887cm] (.75,3.03108891324553) -- (1.27,2.99644789709416);
\node[black,,scale=1.66931596144177] at (.75,2.1650635094611) {0};
\node[black,,scale=1.39109499795053] at (1,2.59807621135332) {\(\searrow\)0};
\path (1,3.46410161513775) -- (1.5,2.59807621135332) -- (2,3.46410161513775) -- cycle;
\path[draw=red,line width=.109105618394887cm] (1.56,3.46410161513775) -- (1.23,3.06572992939691);
\path[draw=green,line width=.109105618394887cm] (1.75,3.03108891324554) -- (1.27,2.99644789709416);
\node[black,,scale=1.66931596144177] at (1.75,3.03108891324554) {1};
\node[black,,scale=1.39109499795053] at (1.25,3.03108891324554) {01};
\node[black,,scale=1.39109499795053] at (1.5,3.46410161513775) {\(\searrow\)0};
\path (-1,1.73205080756888) -- (-.5,.866025403784439) -- (0,1.73205080756888) -- cycle;
\path (-1,1.73205080756888) -- (0,1.73205080756888) -- (-.5,2.59807621135332) -- cycle;
\path[draw=red,line width=.109105618394887cm] (-.44,1.73205080756888) -- (-.75,1.29903810567666);
\path[draw=red,line width=.109105618394887cm] (-.44,1.73205080756888) -- (-.25,2.1650635094611);
\node[black,,scale=1.66931596144177] at (-.75,1.29903810567666) {0};
\node[black,,scale=1.39109499795053] at (-.5,1.73205080756888) {\(\searrow\)0};
\path (-.5,2.59807621135332) -- (0,1.73205080756888) -- (.5,2.59807621135332) -- cycle;
\path (-.5,2.59807621135332) -- (.5,2.59807621135332) -- (0,3.46410161513775) -- cycle;
\path[draw=red,line width=.109105618394887cm] (.27,2.19970452561247) -- (-.25,2.1650635094611);
\path[draw=green,line width=.109105618394887cm] (-.23,3.06572992939691) -- (.25,3.03108891324554);
\path[draw=blue,line width=.109105618394887cm] (.23,2.13042249330972) -- (.0600000000000001,2.59807621135332);
\path[draw=blue,line width=.109105618394887cm] (-.27,2.99644789709416) -- (.0600000000000001,2.59807621135332);
\node[black,,scale=1.39109499795053] at (.25,2.1650635094611) {02};
\node[black,,scale=1.66931596144177] at (-.25,2.1650635094611) {0};
\node[black,,scale=1.39109499795053] at (0,2.59807621135332) {\(\nearrow\)2};
\path (0,3.46410161513775) -- (.5,2.59807621135332) -- (1,3.46410161513775) -- cycle;
\path[draw=green,line width=.109105618394887cm] (.75,3.03108891324553) -- (.25,3.03108891324554);
\node[black,,scale=1.66931596144177] at (.75,3.03108891324553) {1};
\node[black,,scale=1.66931596144177] at (.25,3.03108891324554) {1};
\node[black,,scale=1.19236784249372] at (.5,3.46410161513775) {odd};
\path (-1.5,2.59807621135332) -- (-1,1.73205080756888) -- (-.5,2.59807621135332) -- cycle;
\path (-1.5,2.59807621135332) -- (-.5,2.59807621135332) -- (-1,3.46410161513775) -- cycle;
\path[draw=green,line width=.109105618394887cm] (-.94,2.59807621135332) -- (-1.25,2.1650635094611);
\path[draw=green,line width=.109105618394887cm] (-.94,2.59807621135332) -- (-.75,3.03108891324553);
\node[black,,scale=1.66931596144177] at (-1.25,2.1650635094611) {1};
\node[black,,scale=1.39109499795053] at (-1,2.59807621135332) {\(\searrow\)1};
\path (-1,3.46410161513775) -- (-.5,2.59807621135332) -- (0,3.46410161513775) -- cycle;
\path[draw=green,line width=.109105618394887cm] (-.23,3.06572992939691) -- (-.75,3.03108891324553);
\path[draw=blue,line width=.109105618394887cm] (-.27,2.99644789709416) -- (-.44,3.46410161513775);
\node[black,,scale=1.39109499795053] at (-.25,3.03108891324554) {12};
\node[black,,scale=1.66931596144177] at (-.75,3.03108891324553) {1};
\node[black,,scale=1.39109499795053] at (-.5,3.46410161513775) {\(\nearrow\)2};
\path (-2,3.46410161513775) -- (-1.5,2.59807621135332) -- (-1,3.46410161513775) -- cycle;
\path[draw=red,line width=.109105618394887cm] (-1.44,3.46410161513775) -- (-1.75,3.03108891324554);
\node[black,,scale=1.66931596144177] at (-1.75,3.03108891324554) {0};
\node[black,,scale=1.39109499795053] at (-1.5,3.46410161513775) {\(\searrow\)0};
\end{scope}
\end{tikzpicture}
\end{center}
\end{ex}
We now list the $K$-pieces according to Theorem~\ref{thm:almostsepdesc}:
\begin{gather*}
\uptri{2}{\ua 1}{12}\qquad\uptri{02}{even}{02}\qquad\uptri{01}{\da1}{0}
\\
\uptri{01}{even}{01}\quad\uptri{02}{\da2}{0}\quad\uptri{012}{\da1}{02}
\\
\uptri{02}{\ua1}{012}\quad\downtri{2}{\ua0}{02}\quad\uptri{12}{even}{12}
\qquad
\downtri{12}{\da2}{1}\quad\uptri{012}{odd}{012}\quad\downtri{1}{\ua0}{01}
\\
\downtri{12}{\ua0}{012}
\end{gather*}
If one compares with the list in \cite[Thm.~2]{artic71}, one finds that the two sets of pieces are related
by the duality that takes a triangle to its mirror image with labels inverted according to $i\mapsto 2-i$.
As already noted in \S\ref{ssec:almostsepdesc}, there is also a duality of almost separated descent puzzles (mirror image
combined with $i\mapsto d-i$); it generalizes the duality of 2-step puzzles, in the sense that
duality commutes with the bijection of labels described above.
It is therefore the dual $K$-theoretic almost-separated descent puzzle rule which generalizes the $d=2$ rule
that is stated in \cite[Thm.~2]{artic71}.

\subsection{Separated descent vs. almost separated puzzles}
\rem[gray]{comparison sep desc / almost sep desc: sep desc are the ones for which there are no $k$ paths going from NW to NE (in which case
might as well renumber those starting on NE for easier comparison --  in $K$-theory,
rule depends explicitly on $k$, but we're saved by the odd asymmetry of the rule: we only check $<k$ vs $\ge k$, so $k$ and
$k+1$ seem to play the same role. weird). redo ex 1}
The almost separated descent rule applies to any pairs of permutations
$\pi$ and $\rho$ with $over(\pi,\rho)\le 2$ (adding gratuitous nondescents if $over(\pi,\rho)<2$),
so it looks like it supersedes the separated descent rule which only covers the cases $over(\pi,\rho)\le 1$.
A few comments should be made:
\begin{itemize}
\item Although this statement is strictly true at the level of motivic Segre classes, once one takes the limit $q\to0$,
only the separated descent rule allows preserving equivariance (and therefore, a rule for {\em double}\/ Schubert polynomials).
\item As already mentioned in \S\ref{ssec:almostsepdesc}, because the
  two rules have the strings $\lambda$ and $\mu$ switched between
  Northwest and Northeast sides, there is little hope of a bijection
  between them -- also, almost-separated descent puzzles tend to look
  significantly more complicated.
\end{itemize}

We illustrate the last point below. 
But first we must decide, when $over(\pi,\rho)=1$,
where to add the gratuitous nondescent: in principle there are multiple choices,
anywhere from $\rho$'s second to last descent to $\pi$'s second descent. However, it is more natural to
formally set it equal to their common descent: nothing prevents, in the statement of
Theorem~\ref{thm:almostsepdesc}, from having no occurrence of the letter $k$
(i.e., no paths crossing from the NW side to the NE side);
and since the rule does not differentiate between the varous labels $\ge k$, it is simpler to reindex
$i>k \mapsto i-1$, without any change to the puzzle rule. In this case, the strings match exactly
the ones from the separated descent rule, making comparison easy. (A further advantage of this choice is that
it works even for generic puzzles, since the underlying partial flag varieties are the same.)

\begin{ex}\label{ex:ex1redux}
We redo Example~\ref{ex:sepdesc} using the almost separated descent puzzle rule:
\begin{center}
\tikzset{every picture/.style={scale=0.5,every node/.style={scale=0.5}}}
\input{ex1b.tex}
\end{center}
\end{ex}

\subsection{Separated-descent vs. Grassmannian puzzles with 10s at the bottom}
In \cite[Thm.~2]{artic73}, it was pointed out that Grassmannian puzzles can be generalized
to compute products of pull-backs of Schubert classes from two different Grassmannians to a 2-step flag variety,
on condition that one allow 10s on the bottom side of the puzzles (i.e., the bottom alphabet is $0<10<1$).
The result is only stated in equivariant cohomology, though it works equally well in equivariant $K$-theory,
and even more generally for motivic Segre classes; such puzzle rules fit in the framework of \S\ref{sec:setup},
and we skip the proof, which is yet another simple variation on the existing results.

Let $\pi$ be a (Grassmannian) permutation with single descent at $j$,
and $\rho$ be a (Grassmannian) permutation with single descent at $k$,
$j<k$.
Clearly $over(\pi,\rho)=2$ -- in fact, $desc(\pi,\rho)=2$ so we can of course use
2-step puzzles (or almost-separated-descent puzzles) to solve this problem -- but more interestingly,
$over(\rho,\pi)=0$, which means this problem is also amenable to a separated-descent solution. 
We let the interested reader try to figure out possible bijections between these various puzzles
(with the warning that there is a switch of $\pi$ and $\rho$ between some of them).

\begin{ex}We use the same permutations $\pi=213$, $\rho=231$ as in \cite[Ex.~3]{artic73} except we consider
equivariant $K$-theoretic Schubert classes. Here are the puzzles with 10s at the bottom:
\begin{center}
\tikzset{every picture/.style={scale=0.5,every node/.style={scale=0.5}}}
\begin{tikzpicture}[execute at begin picture={\bgroup\tikzset{every path/.style={}}\clip (-1.86,-.328089) rectangle ++(3.72,3.39025);\egroup},x={(1.54552502393359cm,0cm)},y={(0cm,-1.54552912706501cm)},baseline=(current  bounding  box.center),every path/.style={draw=black,fill=none},line join=round]
\begin{scope}[every path/.append style={fill=white,line width=.0309105415100132cm}]
\path (-.5,.866025403784439) -- (0,0) -- (.5,.866025403784439) -- cycle;
\path (-.5,.866025403784439) -- (.5,.866025403784439) -- (0,1.73205080756888) -- cycle;
\node[black,,scale=1.65525949786121] at (.25,.433012701892219) {1};
\node[black,,scale=1.65525949786121] at (-.25,.433012701892219) {1};
\node[black,,scale=1.65525949786121] at (0,.866025403784439) {1};
\path (0,1.73205080756888) -- (.5,.866025403784439) -- (1,1.73205080756888) -- cycle;
\path (0,1.73205080756888) -- (1,1.73205080756888) -- (.5,2.59807621135332) -- cycle;
\node[black,,scale=1.65525949786121] at (.75,1.29903810567666) {0};
\node[black,,scale=1.65525949786121] at (.25,1.29903810567666) {1};
\node[black,,scale=1.37938129208091] at (.5,1.73205080756888) {10};
\path (.5,2.59807621135332) -- (1,1.73205080756888) -- (1.5,2.59807621135332) -- cycle;
\node[black,,scale=1.65525949786121] at (1.25,2.1650635094611) {0};
\node[black,,scale=1.65525949786121] at (.75,2.1650635094611) {1};
\node[black,,scale=1.37938129208091] at (1,2.59807621135332) {10};
\path[fill=LightGray] (-1,1.73205080756888) -- (-.5,.866025403784439) -- (0,1.73205080756888) -- (-.5,2.59807621135332) -- cycle;
\node[black,,scale=1.65525949786121] at (-.25,1.29903810567666) {1};
\node[black,,scale=1.65525949786121] at (-.75,1.29903810567666) {0};
\path (-.5,2.59807621135332) -- (0,1.73205080756888) -- (.5,2.59807621135332) -- cycle;
\node[black,,scale=1.65525949786121] at (.25,2.1650635094611) {0};
\node[black,,scale=1.65525949786121] at (-.25,2.1650635094611) {0};
\node[black,,scale=1.65525949786121] at (0,2.59807621135332) {0};
\path (-1.5,2.59807621135332) -- (-1,1.73205080756888) -- (-.5,2.59807621135332) -- cycle;
\node[black,,scale=1.65525949786121] at (-.75,2.1650635094611) {1};
\node[black,,scale=1.65525949786121] at (-1.25,2.1650635094611) {1};
\node[black,,scale=1.65525949786121] at (-1,2.59807621135332) {1};
\end{scope}
\end{tikzpicture}\begin{tikzpicture}[execute at begin picture={\bgroup\tikzset{every path/.style={}}\clip (-1.86,-.328089) rectangle ++(3.72,3.39025);\egroup},x={(1.54552502393359cm,0cm)},y={(0cm,-1.54552912706501cm)},baseline=(current  bounding  box.center),every path/.style={draw=black,fill=none},line join=round]
\begin{scope}[every path/.append style={fill=white,line width=.0309105415100132cm}]
\path (-.5,.866025403784439) -- (0,0) -- (.5,.866025403784439) -- cycle;
\path (-.5,.866025403784439) -- (.5,.866025403784439) -- (0,1.73205080756888) -- cycle;
\node[black,,scale=1.65525949786121] at (.25,.433012701892219) {1};
\node[black,,scale=1.65525949786121] at (-.25,.433012701892219) {1};
\node[black,,scale=1.65525949786121] at (0,.866025403784439) {1};
\path (0,1.73205080756888) -- (.5,.866025403784439) -- (1,1.73205080756888) -- cycle;
\path (0,1.73205080756888) -- (1,1.73205080756888) -- (.5,2.59807621135332) -- cycle;
\node[black,,scale=1.65525949786121] at (.75,1.29903810567666) {0};
\node[black,,scale=1.65525949786121] at (.25,1.29903810567666) {0};
\node[black,,scale=1.65525949786121] at (.5,1.73205080756888) {0};
\path (.5,2.59807621135332) -- (1,1.73205080756888) -- (1.5,2.59807621135332) -- cycle;
\node[black,,scale=1.65525949786121] at (1.25,2.1650635094611) {0};
\node[black,,scale=1.65525949786121] at (.75,2.1650635094611) {0};
\node[black,,scale=1.65525949786121] at (1,2.59807621135332) {0};
\path (-1,1.73205080756888) -- (-.5,.866025403784439) -- (0,1.73205080756888) -- cycle;
\path (-1,1.73205080756888) -- (0,1.73205080756888) -- (-.5,2.59807621135332) -- cycle;
\node[black,,scale=1.37938129208091] at (-.25,1.29903810567666) {10};
\node[black,,scale=1.65525949786121] at (-.75,1.29903810567666) {0};
\node[black,,scale=1.65525949786121] at (-.5,1.73205080756888) {1};
\path (-.5,2.59807621135332) -- (0,1.73205080756888) -- (.5,2.59807621135332) -- cycle;
\node[black,,scale=1.65525949786121] at (.25,2.1650635094611) {0};
\node[black,,scale=1.65525949786121] at (-.25,2.1650635094611) {1};
\node[black,,scale=1.37938129208091] at (0,2.59807621135332) {10};
\path (-1.5,2.59807621135332) -- (-1,1.73205080756888) -- (-.5,2.59807621135332) -- cycle;
\node[black,,scale=1.65525949786121] at (-.75,2.1650635094611) {1};
\node[black,,scale=1.65525949786121] at (-1.25,2.1650635094611) {1};
\node[black,,scale=1.65525949786121] at (-1,2.59807621135332) {1};
\end{scope}
\end{tikzpicture}
\end{center}
as well as the corresponding 2-step puzzles:
\begin{center}
\tikzset{every picture/.style={scale=0.5,every node/.style={scale=0.5}}}
\begin{tikzpicture}[execute at begin picture={\bgroup\tikzset{every path/.style={}}\clip (-1.86,-.328089) rectangle ++(3.72,3.39025);\egroup},x={(1.54552502393359cm,0cm)},y={(0cm,-1.54552912706501cm)},baseline=(current  bounding  box.center),every path/.style={draw=black,fill=none},line join=round]
\begin{scope}[every path/.append style={fill=white,line width=.0309105415100132cm}]
\path (-.5,.866025403784439) -- (0,0) -- (.5,.866025403784439) -- cycle;
\path (-.5,.866025403784439) -- (.5,.866025403784439) -- (0,1.73205080756888) -- cycle;
\node[black,,scale=1.65525949786121] at (.25,.433012701892219) {2};
\node[black,,scale=1.65525949786121] at (-.25,.433012701892219) {2};
\node[black,,scale=1.65525949786121] at (0,.866025403784439) {2};
\path (0,1.73205080756888) -- (.5,.866025403784439) -- (1,1.73205080756888) -- cycle;
\path (0,1.73205080756888) -- (1,1.73205080756888) -- (.5,2.59807621135332) -- cycle;
\node[black,,scale=1.65525949786121] at (.75,1.29903810567666) {0};
\node[black,,scale=1.65525949786121] at (.25,1.29903810567666) {1};
\node[black,,scale=1.37938129208091] at (.5,1.73205080756888) {10};
\path (.5,2.59807621135332) -- (1,1.73205080756888) -- (1.5,2.59807621135332) -- cycle;
\node[black,,scale=1.65525949786121] at (1.25,2.1650635094611) {1};
\node[black,,scale=1.65525949786121] at (.75,2.1650635094611) {1};
\node[black,,scale=1.65525949786121] at (1,2.59807621135332) {1};
\path[fill=LightGray] (-1,1.73205080756888) -- (-.5,.866025403784439) -- (0,1.73205080756888) -- (-.5,2.59807621135332) -- cycle;
\node[black,,scale=1.37938129208091] at (-.25,1.29903810567666) {21};
\node[black,,scale=1.65525949786121] at (-.75,1.29903810567666) {0};
\path (-.5,2.59807621135332) -- (0,1.73205080756888) -- (.5,2.59807621135332) -- cycle;
\node[black,,scale=1.65525949786121] at (.25,2.1650635094611) {0};
\node[black,,scale=1.65525949786121] at (-.25,2.1650635094611) {0};
\node[black,,scale=1.65525949786121] at (0,2.59807621135332) {0};
\path (-1.5,2.59807621135332) -- (-1,1.73205080756888) -- (-.5,2.59807621135332) -- cycle;
\node[black,,scale=1.37938129208091] at (-.75,2.1650635094611) {21};
\node[black,,scale=1.65525949786121] at (-1.25,2.1650635094611) {1};
\node[black,,scale=1.65525949786121] at (-1,2.59807621135332) {2};
\end{scope}
\end{tikzpicture}\begin{tikzpicture}[execute at begin picture={\bgroup\tikzset{every path/.style={}}\clip (-1.86,-.328089) rectangle ++(3.72,3.39025);\egroup},x={(1.54552502393359cm,0cm)},y={(0cm,-1.54552912706501cm)},baseline=(current  bounding  box.center),every path/.style={draw=black,fill=none},line join=round]
\begin{scope}[every path/.append style={fill=white,line width=.0309105415100132cm}]
\path (-.5,.866025403784439) -- (0,0) -- (.5,.866025403784439) -- cycle;
\path (-.5,.866025403784439) -- (.5,.866025403784439) -- (0,1.73205080756888) -- cycle;
\node[black,,scale=1.65525949786121] at (.25,.433012701892219) {2};
\node[black,,scale=1.65525949786121] at (-.25,.433012701892219) {2};
\node[black,,scale=1.65525949786121] at (0,.866025403784439) {2};
\path (0,1.73205080756888) -- (.5,.866025403784439) -- (1,1.73205080756888) -- cycle;
\path (0,1.73205080756888) -- (1,1.73205080756888) -- (.5,2.59807621135332) -- cycle;
\node[black,,scale=1.65525949786121] at (.75,1.29903810567666) {0};
\node[black,,scale=1.65525949786121] at (.25,1.29903810567666) {0};
\node[black,,scale=1.65525949786121] at (.5,1.73205080756888) {0};
\path (.5,2.59807621135332) -- (1,1.73205080756888) -- (1.5,2.59807621135332) -- cycle;
\node[black,,scale=1.65525949786121] at (1.25,2.1650635094611) {1};
\node[black,,scale=1.37938129208091] at (.75,2.1650635094611) {10};
\node[black,,scale=1.65525949786121] at (1,2.59807621135332) {0};
\path (-1,1.73205080756888) -- (-.5,.866025403784439) -- (0,1.73205080756888) -- cycle;
\path (-1,1.73205080756888) -- (0,1.73205080756888) -- (-.5,2.59807621135332) -- cycle;
\node[black,,scale=1.37938129208091] at (-.25,1.29903810567666) {20};
\node[black,,scale=1.65525949786121] at (-.75,1.29903810567666) {0};
\node[black,,scale=1.65525949786121] at (-.5,1.73205080756888) {2};
\path (-.5,2.59807621135332) -- (0,1.73205080756888) -- (.5,2.59807621135332) -- cycle;
\node[black,,scale=1.65525949786121] at (.25,2.1650635094611) {1};
\node[black,,scale=1.65525949786121] at (-.25,2.1650635094611) {1};
\node[black,,scale=1.65525949786121] at (0,2.59807621135332) {1};
\path (-1.5,2.59807621135332) -- (-1,1.73205080756888) -- (-.5,2.59807621135332) -- cycle;
\node[black,,scale=1.37938129208091] at (-.75,2.1650635094611) {21};
\node[black,,scale=1.65525949786121] at (-1.25,2.1650635094611) {1};
\node[black,,scale=1.65525949786121] at (-1,2.59807621135332) {2};
\end{scope}
\end{tikzpicture}
\end{center}
In each case, the two puzzles have fugacity $1-y_2/y_1$ and $y_2/y_1$, respectively.

After switching $\pi$ and $\rho$, we have two choices of separation of descents, leading respectively to
\begin{center}
\tikzset{every picture/.style={scale=0.5,every node/.style={scale=0.5}}}
\begin{tikzpicture}[execute at begin picture={\bgroup\tikzset{every path/.style={}}\clip (-1.86,-.328089) rectangle ++(3.72,3.39025);\egroup},x={(1.54552502393359cm,0cm)},y={(0cm,-1.54552912706501cm)},baseline=(current  bounding  box.center),every path/.style={draw=black,fill=none},line join=round]
\begin{scope}[every path/.append style={fill=white,line width=.0309105415100132cm}]
\path (-.5,.866025403784439) -- (0,0) -- (.5,.866025403784439) -- cycle;
\path (-.5,.866025403784439) -- (.5,.866025403784439) -- (0,1.73205080756888) -- cycle;
\node[black,,scale=1.65525949786121] at (-.25,.433012701892219) {1};
\node[black,,scale=1.65525949786121] at (0,.866025403784439) {1};
\path (0,1.73205080756888) -- (.5,.866025403784439) -- (1,1.73205080756888) -- cycle;
\path (0,1.73205080756888) -- (1,1.73205080756888) -- (.5,2.59807621135332) -- cycle;
\node[black,,scale=1.65525949786121] at (.75,1.29903810567666) {0};
\node[black,,scale=1.65525949786121] at (.25,1.29903810567666) {1};
\node[black,,scale=1.37938129208091] at (.5,1.73205080756888) {01};
\path (.5,2.59807621135332) -- (1,1.73205080756888) -- (1.5,2.59807621135332) -- cycle;
\node[black,,scale=1.65525949786121] at (.75,2.1650635094611) {1};
\node[black,,scale=1.65525949786121] at (1,2.59807621135332) {1};
\path[fill=LightGray] (-1,1.73205080756888) -- (-.5,.866025403784439) -- (0,1.73205080756888) -- (-.5,2.59807621135332) -- cycle;
\path (-.5,2.59807621135332) -- (0,1.73205080756888) -- (.5,2.59807621135332) -- cycle;
\node[black,,scale=1.65525949786121] at (.25,2.1650635094611) {0};
\node[black,,scale=1.65525949786121] at (0,2.59807621135332) {0};
\path (-1.5,2.59807621135332) -- (-1,1.73205080756888) -- (-.5,2.59807621135332) -- cycle;
\node[black,,scale=1.65525949786121] at (-1.25,2.1650635094611) {2};
\node[black,,scale=1.65525949786121] at (-1,2.59807621135332) {2};
\end{scope}
\end{tikzpicture}\begin{tikzpicture}[execute at begin picture={\bgroup\tikzset{every path/.style={}}\clip (-1.86,-.328089) rectangle ++(3.72,3.39025);\egroup},x={(1.54552502393359cm,0cm)},y={(0cm,-1.54552912706501cm)},baseline=(current  bounding  box.center),every path/.style={draw=black,fill=none},line join=round]
\begin{scope}[every path/.append style={fill=white,line width=.0309105415100132cm}]
\path (-.5,.866025403784439) -- (0,0) -- (.5,.866025403784439) -- cycle;
\path (-.5,.866025403784439) -- (.5,.866025403784439) -- (0,1.73205080756888) -- cycle;
\node[black,,scale=1.65525949786121] at (-.25,.433012701892219) {1};
\node[black,,scale=1.65525949786121] at (0,.866025403784439) {1};
\path (0,1.73205080756888) -- (.5,.866025403784439) -- (1,1.73205080756888) -- cycle;
\path (0,1.73205080756888) -- (1,1.73205080756888) -- (.5,2.59807621135332) -- cycle;
\node[black,,scale=1.65525949786121] at (.75,1.29903810567666) {0};
\node[black,,scale=1.65525949786121] at (.5,1.73205080756888) {0};
\path (.5,2.59807621135332) -- (1,1.73205080756888) -- (1.5,2.59807621135332) -- cycle;
\node[black,,scale=1.65525949786121] at (.75,2.1650635094611) {0};
\node[black,,scale=1.65525949786121] at (1,2.59807621135332) {0};
\path (-1,1.73205080756888) -- (-.5,.866025403784439) -- (0,1.73205080756888) -- cycle;
\path (-1,1.73205080756888) -- (0,1.73205080756888) -- (-.5,2.59807621135332) -- cycle;
\node[black,,scale=1.65525949786121] at (-.25,1.29903810567666) {1};
\node[black,,scale=1.65525949786121] at (-.5,1.73205080756888) {1};
\path (-.5,2.59807621135332) -- (0,1.73205080756888) -- (.5,2.59807621135332) -- cycle;
\node[black,,scale=1.65525949786121] at (-.25,2.1650635094611) {1};
\node[black,,scale=1.65525949786121] at (0,2.59807621135332) {1};
\path (-1.5,2.59807621135332) -- (-1,1.73205080756888) -- (-.5,2.59807621135332) -- cycle;
\node[black,,scale=1.65525949786121] at (-1.25,2.1650635094611) {2};
\node[black,,scale=1.65525949786121] at (-1,2.59807621135332) {2};
\end{scope}
\end{tikzpicture}
\end{center}
and
\begin{center}
\tikzset{every picture/.style={scale=0.5,every node/.style={scale=0.5}}}
\begin{tikzpicture}[execute at begin picture={\bgroup\tikzset{every path/.style={}}\clip (-1.86,-.328089) rectangle ++(3.72,3.39025);\egroup},x={(1.54552502393359cm,0cm)},y={(0cm,-1.54552912706501cm)},baseline=(current  bounding  box.center),every path/.style={draw=black,fill=none},line join=round]
\begin{scope}[every path/.append style={fill=white,line width=.0309105415100132cm}]
\path (-.5,.866025403784439) -- (0,0) -- (.5,.866025403784439) -- cycle;
\path (-.5,.866025403784439) -- (.5,.866025403784439) -- (0,1.73205080756888) -- cycle;
\node[black,,scale=1.65525949786121] at (.25,.433012701892219) {1};
\node[black,,scale=1.65525949786121] at (0,.866025403784439) {1};
\path (0,1.73205080756888) -- (.5,.866025403784439) -- (1,1.73205080756888) -- cycle;
\path (0,1.73205080756888) -- (1,1.73205080756888) -- (.5,2.59807621135332) -- cycle;
\node[black,,scale=1.65525949786121] at (.75,1.29903810567666) {0};
\node[black,,scale=1.65525949786121] at (.25,1.29903810567666) {1};
\node[black,,scale=1.37938129208091] at (.5,1.73205080756888) {01};
\path (.5,2.59807621135332) -- (1,1.73205080756888) -- (1.5,2.59807621135332) -- cycle;
\node[black,,scale=1.65525949786121] at (.75,2.1650635094611) {1};
\node[black,,scale=1.65525949786121] at (1,2.59807621135332) {1};
\path[fill=LightGray] (-1,1.73205080756888) -- (-.5,.866025403784439) -- (0,1.73205080756888) -- (-.5,2.59807621135332) -- cycle;
\path (-.5,2.59807621135332) -- (0,1.73205080756888) -- (.5,2.59807621135332) -- cycle;
\node[black,,scale=1.65525949786121] at (.25,2.1650635094611) {0};
\node[black,,scale=1.65525949786121] at (0,2.59807621135332) {0};
\path (-1.5,2.59807621135332) -- (-1,1.73205080756888) -- (-.5,2.59807621135332) -- cycle;
\node[black,,scale=1.65525949786121] at (-1.25,2.1650635094611) {2};
\node[black,,scale=1.65525949786121] at (-1,2.59807621135332) {2};
\end{scope}
\end{tikzpicture}\begin{tikzpicture}[execute at begin picture={\bgroup\tikzset{every path/.style={}}\clip (-1.86,-.328089) rectangle ++(3.72,3.39025);\egroup},x={(1.54552502393359cm,0cm)},y={(0cm,-1.54552912706501cm)},baseline=(current  bounding  box.center),every path/.style={draw=black,fill=none},line join=round]
\begin{scope}[every path/.append style={fill=white,line width=.0309105415100132cm}]
\path (-.5,.866025403784439) -- (0,0) -- (.5,.866025403784439) -- cycle;
\path (-.5,.866025403784439) -- (.5,.866025403784439) -- (0,1.73205080756888) -- cycle;
\node[black,,scale=1.65525949786121] at (.25,.433012701892219) {1};
\node[black,,scale=1.65525949786121] at (0,.866025403784439) {1};
\path (0,1.73205080756888) -- (.5,.866025403784439) -- (1,1.73205080756888) -- cycle;
\path[fill=LightPink] (0,1.73205080756888) -- (1,1.73205080756888) -- (.5,2.59807621135332) -- cycle;
\node[black,,scale=1.65525949786121] at (.75,1.29903810567666) {0};
\node[black,,scale=1.65525949786121] at (.25,1.29903810567666) {1};
\node[black,,scale=1.37938129208091] at (.5,1.73205080756888) {01};
\path (.5,2.59807621135332) -- (1,1.73205080756888) -- (1.5,2.59807621135332) -- cycle;
\node[black,,scale=1.65525949786121] at (.75,2.1650635094611) {0};
\node[black,,scale=1.65525949786121] at (1,2.59807621135332) {0};
\path[fill=LightGray] (-1,1.73205080756888) -- (-.5,.866025403784439) -- (0,1.73205080756888) -- (-.5,2.59807621135332) -- cycle;
\path (-.5,2.59807621135332) -- (0,1.73205080756888) -- (.5,2.59807621135332) -- cycle;
\node[black,,scale=1.65525949786121] at (.25,2.1650635094611) {1};
\node[black,,scale=1.65525949786121] at (0,2.59807621135332) {1};
\path (-1.5,2.59807621135332) -- (-1,1.73205080756888) -- (-.5,2.59807621135332) -- cycle;
\node[black,,scale=1.65525949786121] at (-1.25,2.1650635094611) {2};
\node[black,,scale=1.65525949786121] at (-1,2.59807621135332) {2};
\end{scope}
\end{tikzpicture}\begin{tikzpicture}[execute at begin picture={\bgroup\tikzset{every path/.style={}}\clip (-1.86,-.328089) rectangle ++(3.72,3.39025);\egroup},x={(1.54552502393359cm,0cm)},y={(0cm,-1.54552912706501cm)},baseline=(current  bounding  box.center),every path/.style={draw=black,fill=none},line join=round]
\begin{scope}[every path/.append style={fill=white,line width=.0309105415100132cm}]
\path (-.5,.866025403784439) -- (0,0) -- (.5,.866025403784439) -- cycle;
\path (-.5,.866025403784439) -- (.5,.866025403784439) -- (0,1.73205080756888) -- cycle;
\node[black,,scale=1.65525949786121] at (.25,.433012701892219) {1};
\node[black,,scale=1.65525949786121] at (0,.866025403784439) {1};
\path (0,1.73205080756888) -- (.5,.866025403784439) -- (1,1.73205080756888) -- cycle;
\path (0,1.73205080756888) -- (1,1.73205080756888) -- (.5,2.59807621135332) -- cycle;
\node[black,,scale=1.65525949786121] at (.75,1.29903810567666) {0};
\node[black,,scale=1.65525949786121] at (.5,1.73205080756888) {0};
\path (.5,2.59807621135332) -- (1,1.73205080756888) -- (1.5,2.59807621135332) -- cycle;
\node[black,,scale=1.65525949786121] at (.75,2.1650635094611) {0};
\node[black,,scale=1.65525949786121] at (1,2.59807621135332) {0};
\path (-1,1.73205080756888) -- (-.5,.866025403784439) -- (0,1.73205080756888) -- cycle;
\path (-1,1.73205080756888) -- (0,1.73205080756888) -- (-.5,2.59807621135332) -- cycle;
\node[black,,scale=1.65525949786121] at (-.25,1.29903810567666) {1};
\node[black,,scale=1.65525949786121] at (-.5,1.73205080756888) {1};
\path (-.5,2.59807621135332) -- (0,1.73205080756888) -- (.5,2.59807621135332) -- cycle;
\node[black,,scale=1.65525949786121] at (-.25,2.1650635094611) {1};
\node[black,,scale=1.65525949786121] at (0,2.59807621135332) {1};
\path (-1.5,2.59807621135332) -- (-1,1.73205080756888) -- (-.5,2.59807621135332) -- cycle;
\node[black,,scale=1.65525949786121] at (-1.25,2.1650635094611) {2};
\node[black,,scale=1.65525949786121] at (-1,2.59807621135332) {2};
\end{scope}
\end{tikzpicture}
\end{center}
Here are the corresponding 2-step puzzles:
\begin{center}
\tikzset{every picture/.style={scale=0.5,every node/.style={scale=0.5}}}
\begin{tikzpicture}[execute at begin picture={\bgroup\tikzset{every path/.style={}}\clip (-1.86,-.328089) rectangle ++(3.72,3.39025);\egroup},x={(1.54552502393359cm,0cm)},y={(0cm,-1.54552912706501cm)},baseline=(current  bounding  box.center),every path/.style={draw=black,fill=none},line join=round]
\begin{scope}[every path/.append style={fill=white,line width=.0309105415100132cm}]
\path (-.5,.866025403784439) -- (0,0) -- (.5,.866025403784439) -- cycle;
\path (-.5,.866025403784439) -- (.5,.866025403784439) -- (0,1.73205080756888) -- cycle;
\node[black,,scale=1.65525949786121] at (.25,.433012701892219) {1};
\node[black,,scale=1.65525949786121] at (-.25,.433012701892219) {1};
\node[black,,scale=1.65525949786121] at (0,.866025403784439) {1};
\path (0,1.73205080756888) -- (.5,.866025403784439) -- (1,1.73205080756888) -- cycle;
\path (0,1.73205080756888) -- (1,1.73205080756888) -- (.5,2.59807621135332) -- cycle;
\node[black,,scale=1.65525949786121] at (.75,1.29903810567666) {0};
\node[black,,scale=1.37938129208091] at (.25,1.29903810567666) {21};
\node[black,,scale=.919589150857369] at (.5,1.73205080756888) {(21)0};
\path (.5,2.59807621135332) -- (1,1.73205080756888) -- (1.5,2.59807621135332) -- cycle;
\node[black,,scale=1.65525949786121] at (1.25,2.1650635094611) {2};
\node[black,,scale=1.37938129208091] at (.75,2.1650635094611) {21};
\node[black,,scale=1.65525949786121] at (1,2.59807621135332) {1};
\path[fill=LightGray] (-1,1.73205080756888) -- (-.5,.866025403784439) -- (0,1.73205080756888) -- (-.5,2.59807621135332) -- cycle;
\node[black,,scale=1.65525949786121] at (-.25,1.29903810567666) {2};
\node[black,,scale=1.65525949786121] at (-.75,1.29903810567666) {0};
\path (-.5,2.59807621135332) -- (0,1.73205080756888) -- (.5,2.59807621135332) -- cycle;
\node[black,,scale=1.65525949786121] at (.25,2.1650635094611) {0};
\node[black,,scale=1.65525949786121] at (-.25,2.1650635094611) {0};
\node[black,,scale=1.65525949786121] at (0,2.59807621135332) {0};
\path (-1.5,2.59807621135332) -- (-1,1.73205080756888) -- (-.5,2.59807621135332) -- cycle;
\node[black,,scale=1.65525949786121] at (-.75,2.1650635094611) {2};
\node[black,,scale=1.65525949786121] at (-1.25,2.1650635094611) {2};
\node[black,,scale=1.65525949786121] at (-1,2.59807621135332) {2};
\end{scope}
\end{tikzpicture}\begin{tikzpicture}[execute at begin picture={\bgroup\tikzset{every path/.style={}}\clip (-1.86,-.328089) rectangle ++(3.72,3.39025);\egroup},x={(1.54552502393359cm,0cm)},y={(0cm,-1.54552912706501cm)},baseline=(current  bounding  box.center),every path/.style={draw=black,fill=none},line join=round]
\begin{scope}[every path/.append style={fill=white,line width=.0309105415100132cm}]
\path (-.5,.866025403784439) -- (0,0) -- (.5,.866025403784439) -- cycle;
\path (-.5,.866025403784439) -- (.5,.866025403784439) -- (0,1.73205080756888) -- cycle;
\node[black,,scale=1.65525949786121] at (.25,.433012701892219) {1};
\node[black,,scale=1.65525949786121] at (-.25,.433012701892219) {1};
\node[black,,scale=1.65525949786121] at (0,.866025403784439) {1};
\path (0,1.73205080756888) -- (.5,.866025403784439) -- (1,1.73205080756888) -- cycle;
\path[fill=LightPink] (0,1.73205080756888) -- (1,1.73205080756888) -- (.5,2.59807621135332) -- cycle;
\node[black,,scale=1.65525949786121] at (.75,1.29903810567666) {0};
\node[black,,scale=1.37938129208091] at (.25,1.29903810567666) {21};
\node[black,,scale=.919589150857369] at (.5,1.73205080756888) {(21)0};
\path (.5,2.59807621135332) -- (1,1.73205080756888) -- (1.5,2.59807621135332) -- cycle;
\node[black,,scale=1.65525949786121] at (1.25,2.1650635094611) {2};
\node[black,,scale=1.37938129208091] at (.75,2.1650635094611) {20};
\node[black,,scale=1.65525949786121] at (1,2.59807621135332) {0};
\path[fill=LightGray] (-1,1.73205080756888) -- (-.5,.866025403784439) -- (0,1.73205080756888) -- (-.5,2.59807621135332) -- cycle;
\node[black,,scale=1.65525949786121] at (-.25,1.29903810567666) {2};
\node[black,,scale=1.65525949786121] at (-.75,1.29903810567666) {0};
\path (-.5,2.59807621135332) -- (0,1.73205080756888) -- (.5,2.59807621135332) -- cycle;
\node[black,,scale=1.37938129208091] at (.25,2.1650635094611) {10};
\node[black,,scale=1.65525949786121] at (-.25,2.1650635094611) {0};
\node[black,,scale=1.65525949786121] at (0,2.59807621135332) {1};
\path (-1.5,2.59807621135332) -- (-1,1.73205080756888) -- (-.5,2.59807621135332) -- cycle;
\node[black,,scale=1.65525949786121] at (-.75,2.1650635094611) {2};
\node[black,,scale=1.65525949786121] at (-1.25,2.1650635094611) {2};
\node[black,,scale=1.65525949786121] at (-1,2.59807621135332) {2};
\end{scope}
\end{tikzpicture}\begin{tikzpicture}[execute at begin picture={\bgroup\tikzset{every path/.style={}}\clip (-1.86,-.328089) rectangle ++(3.72,3.39025);\egroup},x={(1.54552502393359cm,0cm)},y={(0cm,-1.54552912706501cm)},baseline=(current  bounding  box.center),every path/.style={draw=black,fill=none},line join=round]
\begin{scope}[every path/.append style={fill=white,line width=.0309105415100132cm}]
\path (-.5,.866025403784439) -- (0,0) -- (.5,.866025403784439) -- cycle;
\path (-.5,.866025403784439) -- (.5,.866025403784439) -- (0,1.73205080756888) -- cycle;
\node[black,,scale=1.65525949786121] at (.25,.433012701892219) {1};
\node[black,,scale=1.65525949786121] at (-.25,.433012701892219) {1};
\node[black,,scale=1.65525949786121] at (0,.866025403784439) {1};
\path (0,1.73205080756888) -- (.5,.866025403784439) -- (1,1.73205080756888) -- cycle;
\path (0,1.73205080756888) -- (1,1.73205080756888) -- (.5,2.59807621135332) -- cycle;
\node[black,,scale=1.65525949786121] at (.75,1.29903810567666) {0};
\node[black,,scale=1.65525949786121] at (.25,1.29903810567666) {0};
\node[black,,scale=1.65525949786121] at (.5,1.73205080756888) {0};
\path (.5,2.59807621135332) -- (1,1.73205080756888) -- (1.5,2.59807621135332) -- cycle;
\node[black,,scale=1.65525949786121] at (1.25,2.1650635094611) {2};
\node[black,,scale=1.37938129208091] at (.75,2.1650635094611) {20};
\node[black,,scale=1.65525949786121] at (1,2.59807621135332) {0};
\path (-1,1.73205080756888) -- (-.5,.866025403784439) -- (0,1.73205080756888) -- cycle;
\path (-1,1.73205080756888) -- (0,1.73205080756888) -- (-.5,2.59807621135332) -- cycle;
\node[black,,scale=1.37938129208091] at (-.25,1.29903810567666) {10};
\node[black,,scale=1.65525949786121] at (-.75,1.29903810567666) {0};
\node[black,,scale=1.65525949786121] at (-.5,1.73205080756888) {1};
\path (-.5,2.59807621135332) -- (0,1.73205080756888) -- (.5,2.59807621135332) -- cycle;
\node[black,,scale=1.65525949786121] at (.25,2.1650635094611) {2};
\node[black,,scale=1.37938129208091] at (-.25,2.1650635094611) {21};
\node[black,,scale=1.65525949786121] at (0,2.59807621135332) {1};
\path (-1.5,2.59807621135332) -- (-1,1.73205080756888) -- (-.5,2.59807621135332) -- cycle;
\node[black,,scale=1.65525949786121] at (-.75,2.1650635094611) {2};
\node[black,,scale=1.65525949786121] at (-1.25,2.1650635094611) {2};
\node[black,,scale=1.65525949786121] at (-1,2.59807621135332) {2};
\end{scope}
\end{tikzpicture}
\end{center}
\end{ex}
In the last two sets of puzzles, the fugacities are $1-y_2/y_1$, $-(1-y_2/y_1)$, $1$, respectively.

\subsection{Almost-separated-descent vs. 2-step puzzles
  with 10s/21s at the bottom}

There is a $2$-step version of \cite[Thm.~2]{artic73}, in which we allow
either $10$s or $21$s (but not both) on the bottom side. We provide
the version with $10$s at the bottom (the version with $21$s follows
the same pattern, and can also be obtained by duality).

Let $\pi$ (resp.\ $\rho$) be a permutation with descent set $D(\pi)=\{j,k\}$ (resp.\ $D(\rho)=\{j',k\}$),
with $j>j'$.
We encode them with strings in $\{0,1,2\}$ (though their contents are different). Then one has a product
rule using 2-step puzzles
(as defined in the various papers \cite{BKPT,artic71,artic80} depending on the cohomology theory and the choice
of classes), except the bottom permutation uses the alphabet $0<10<1<2$.

Alternatively, since $over(\pi,\rho)=2$, one can use almost-separated-descents (for equivariant
motivic Segre classes;
for Schubert classes, only nonequivariantly).
Since there is no need to swap $\pi$ and $\rho$ in this case, a bijection might seem possible (though
still nonobvious).

\begin{ex}
Let us consider $\pi=1432$ and $\rho=2143$, and {\em generic}\/ nonequivariant puzzles. In the modified
2-step puzzle rule, one finds:
\begin{center}
\tikzset{every picture/.style={scale=0.5,every node/.style={scale=0.5}}}
\input{ex21a.tex}
\end{center}
where the first two puzzles are ordinary nonequivariant puzzles.
If instead we use almost-separated-descents, one has:
\begin{center}
\tikzset{every picture/.style={scale=0.5,every node/.style={scale=0.5}}}
\input{ex21b.tex}
\end{center}
\rem[gray]{I can't be bothered computing their $K$-fugacities}
\end{ex}

\vfill\eject

\gdef\MRshorten#1 #2MRend{#1}%
\gdef\MRfirsttwo#1#2{\if#1M%
MR\else MR#1#2\fi}
\def\MRfix#1{\MRshorten\MRfirsttwo#1 MRend}
\renewcommand\MR[1]{\relax\ifhmode\unskip\spacefactor3000 \space\fi
\MRhref{\MRfix{#1}}{{\scriptsize \MRfix{#1}}}}
\renewcommand{\MRhref}[2]{%
\href{http://www.ams.org/mathscinet-getitem?mr=#1}{#2}}
\bibliographystyle{amsalphahyper}
\bibliography{biblio}

\end{document}